\newenvironment{customthm}[1]
{\innercustomthm}
{\endinnercustomthm}
\numberwithin{equation}{section}
\theoremstyle{plain}
\newtheorem{thm}{Theorem}[section]
\newtheorem{lemma}[thm]{Lemma}
\newtheorem{prop}[thm]{Proposition}
\newtheorem{cor}[thm]{Corollary}
\newtheorem{open}[thm]{Question}
\newtheorem{thmx}{Theorem}
\theoremstyle{definition}
\newtheorem{defn}[thm]{Definition}
\newtheorem{ex}[thm]{Example}
\newtheorem{conject}[thm]{Conjecture}
\newtheorem{remark}[thm]{Remark}
\newcommand{\Z}{\mathbb{Z}}
\newcommand{\Orientations}[1]{\mathbf O(#1)}
\newcommand{\Fourientations}[1]{\mathbf F(#1)}
\newcommand{\SignedCircuits}[1]{\vec{\mathbf C}(#1)}
\newcommand{\SignedCoCircuits}[1]{\vec{\mathbf C^*}(#1)}
\newcommand{\Circuits}[1]{\mathbf C(#1)} 
\newcommand{\CoCircuits}[1]{\mathbf C^*(#1)}
\renewcommand{\vec}{\overrightarrow}
\DeclareMathOperator{\flip}{flip}
\newcommand{\ccequiv}{\mathbf G}           
\newcommand{\ccmin}{\mathbf O^\circ}     
\newcommand{\ori}[1]{\mathcal {#1}}          
\newcommand{\fouri}[1]{\mathcal {#1}}        
\newcommand{\oriat}[1]{\langle #1 \rangle}  
\newcommand{\reverse}[2]{{}_{#1}#2}           
\newcommand{\compat}{\sim} 
\newcommand{\fourmap}[2]{\fouri F (#1,#2)} 
\DeclareMathOperator{\row}{row}
\DeclareMathOperator{\BBY}{\beta}
\DeclareMathOperator{\supp}{supp}
\definecolor{light-gray}{gray}{0.8}
\definecolor{v}{rgb}{0.32,0,0.68}
\definecolor{e}{rgb}{0,0.9,0.3}
\definecolor{r}{rgb}{1,0,0}
\definecolor{lightv}{rgb}{0.4,0,0.7}
\definecolor{darke}{rgb}{0,0.6,0.5}
\title[A consistent Sandpile Torsor Algorithm]{A Consistent Sandpile Torsor Algorithm for Regular Matroids}
\author[C. Ding]{Changxin Ding}
\address{School of Mathematics, Georgia Institute of Technology \\ Atlanta, Georgia 30332-0160, USA}
\email{cding66@gatech.edu}
\author[A. McDonough]{Alex McDonough}
\address{Department of Mathematics, University of Oregon\\
Eugene, Oregon 97403, USA}
\email{alexmcd@uoregon.edu}
\author[L. T\'othm\'er\'esz]{Lilla T\'othm\'er\'esz}
\address{ELTE Eötvös Loránd University, P\'azm\'any P\'eter s\'et\'any 1/C, Budapest, Hungary}
\email{lilla.tothmeresz@ttk.elte.hu}
\author[C.H. Yuen]{Chi Ho Yuen}
\address{Department of Applied Mathematics, National Yang Ming Chiao Tung University, Hsinchu 30009, Taiwan}
\email{chyuen@math.nctu.edu.tw}
\date{\today}
\subjclass{05B35, 05E18}
\keywords{sandpile group, critical group, Jacobian, regular matroid, oriented matroid, fourientation, triangulating signature}
\begin{document}
\begin{abstract}
Every regular matroid is associated with a {\em sandpile group}, which acts simply transitively on the set of bases in various ways.
Ganguly and the second author introduced the notion of {\em consistency} to describe classes of actions that respect deletion-contraction in a precise sense, and proved the consistency of rotor-routing torsors (and uniqueness thereof) for plane graphs.

In this work, we prove that the class of actions introduced by Backman, Baker, and the fourth author, is consistent for regular matroids. More precisely, we prove the consistency of its generalization given by Backman, Santos and the fourth author, and independently by the first author.
This extends the above existence assertion, as well as makes progress on the goal of classifying all consistent actions.
\end{abstract}

\maketitle

\section{Introduction} \hspace {1pt}

For over a century, mathematicians have been interested in enumerative properties of the \emph{spanning trees} of graphs. A remarkable and relatively recent observation is that the set of spanning trees of a graph (and more generally, the \emph{bases} of a \emph{regular matroid}) admit interesting group actions, which bestow on these sets a group-like structure. We are deeply curious about this mysterious algebraic structure, especially in cases where it is surprisingly canonical (in a sense that we will clarify over the next several paragraphs). 

To be more precise, the {\em sandpile group} (also called the critical group, Jacobian, etc.) $S(G)$ of a graph $G$ is a finite abelian group whose size is equal to the number of spanning trees of $G$. The algebraic structure discussed in the previous paragraph is given by a \emph{simply transitive action} of $S(G)$ on the spanning trees of $G$. Loosely speaking, we call such an action a \emph{sandpile torsor action}. To define sandpile torsor actions in a systematic way, it is necessary to work on graphs with some auxiliary data (see~\cite[Theorem 8.1]{Wagner}). 

One possible setup is to work with \emph{embedded} graphs (called \emph{ribbon graphs} or \emph{maps}) equipped with a distinguished \emph{root} vertex. There are at least two known algorithms for associating each rooted embedded graph with a sandpile torsor action: the \emph{rotor-routing algorithm} (see~\cite{Hol08}) and the \emph{Bernardi algorithm} (see~\cite{Bernardi, BW_Bernardi}). These algorithms have been shown to give substantially different actions in general, so this setup does not appear to lead to a ``canonical'' algorithm for assigning sandpile torsor actions~\cite{Ding_torsors,SW_torsors}. 


A related setup which leads to further curiosities is to restrict to \emph{plane graphs} (i.e., \emph{planar} embedded graphs). Remarkably, for plane graphs, the rotor-routing and Bernardi algorithms do not depend on the root vertex (\cite[Theorem 2]{CCG}, \cite[Theorem 5.1]{BW_Bernardi}, for an alternate definition, see \cite{trinity_sandpile}). 
Hence the auxiliary data can be chosen as the planar embedding (without specifying a root vertex).
An algorithm which maps from plane graphs to sandpile torsor actions is called a \emph{sandpile torsor algorithm (on plane graphs)}; hence the planar rotor-routing and Bernardi algorithms are examples of sandpile torsor algorithms on plane graphs. 
Perhaps even more remarkably, the rotor-routing and Bernardi algorithms are equivalent in this context \cite[Theorem 7.1]{BW_Bernardi}. This surprising equivalence lead Klivans to conjecture that this algorithm was in some sense ``canonical'', and all ``nice'' sandpile torsor algorithms on plane graphs must have the same structure \cite[Conjecture~4.7.17]{Klivans}. This conjecture was made precise and proven by Ganguly and the second author \cite{GM}.

The first challenge to resolve this conjecture was to give a suitable definition for a ``nice'' sandpile torsor algorithm. To do this, the authors of \cite{GM} introduce the notion of \emph{consistency}. At a high level, a \emph{consistent} sandpile torsor algorithm is one which ``behaves nicely with respect to deletion and contraction'' (Note that if we delete or contract an edge in a plane graph, there is still a natural embedding for the minor.) In general, sandpile groups do not behave well with respect to deletion and contraction: the additive relation $|S(G)|=|S(G\setminus e)|+|S(G/e)|$ implies that it is almost impossible to relate these groups directly in an algebraically meaningful way. Nevertheless, for specific combinations of group elements and spanning trees, there is a 
natural compatibility condition after taking deletion or contraction.
A consistent sandpile torsor algorithm is essentially one which satisfies the compatibility condition. See~\cite[Definition 4.3]{GM} for the precise (but technical) definition. 
After the definition of consistency was established, the two main results of~\cite{GM} were proving the existence and the uniqueness of a consistent sandpile torsor algorithm on plane graphs.\footnote{More precisely, these is a unique collection of four consistent sandpile torsor algorithms on plane graphs that are all closely related.}

In this paper, we are going to work with {\em regular matroids} from a perspective of {\em oriented matroids}. This approach follows a series of works which use operations on orientations of graphs or regular matroids to study sandpile group actions, with the first systematic work by Backman \cite{Backman17}. Informally, oriented matroids generalize directed graphs: just as each cycle (respectively, minimal cut) of a graph can be oriented in two ways which are the reversal of each other, for each circuit (respectively, cocircuit) $C$ of an oriented matroid, we can label its elements as positive or negative and get a {\em signed circuit (respectively, cocircuit)} in two ways which are the reversal of each other. 
Regular matroids, among many equivalent definitions, are orientable binary matroids, which makes them the largest matroid family that preserves many essential properties of graphical matroids. For example, the family of regular matroids is closed with respect to the matroidal version of deletion and contraction.

For regular matroids, \emph{bases} play the role that spanning trees played for graphs. 
Building off work from Bacher, de la Harpe, and Nagnibeda~\cite{BdlHN}, 
Merino defined the sandpile group of a regular matroid~\cite{Merino}, and showed that its size is also equal to the number of bases of the matroid.
In particular, the regular matroid version of a sandpile torsor algorithm is a map from any regular matroid $M$ (with some auxiliary data) to a free transitive action of the sandpile group of $M$ on the set of bases of $M$. 


Using the auxiliary data of \emph{acyclic circuit-cocircuit signatures}, Backman, Baker, and the fourth author defined a sandpile torsor algorithm for regular matroids which was motivated by polyhedral geometry \cite{BBY}. We will call this sandpile torsor algorithm the \emph{BBY algorithm}\footnote{The (implicit) original name of the corresponding bijections was {\em geometric bijections}, which the fourth author prefers.}. Later, the first author \cite{Ding2} and the fourth author with Backman and Santos \cite{BSY} independently generalized this algorithm to work with the auxiliary data of a larger class of signatures called \emph{triangulating circuit-cocircuit signatures}. 
We will continue to refer to it as the BBY algorithm. It is worthwhile to mention that the aforementioned rotor-routing action for plane graphs also fits into this framework. That is, it is a BBY torsor for some triangulating signature (see Section \ref{sec:planar}).

Deletion and contraction can be defined for circuit-cocircuit signatures as well.
Hence the definition of \emph{consistency} generalizes naturally to the context of sandpile torsor algorithms on regular matroids (equipped with triangulating circuit-cocircuit signatures). 
However, the arguments used in~\cite{GM} to prove the existence and uniqueness of a consistent sandpile torsor algorithm 
frequently reference the vertices of graphs, which do not have a matroidal analogue, hence generalizing these arguments is far from being straightforward. 

Nevertheless, we were able to prove the consistency of the BBY algorithm by applying a framework introduced by the first author~\cite{Ding2}. This framework gives an alternate definition of the BBY algorithm which uses \emph{fourientations}, an object that was first defined by Backman and Hopkins~\cite{BH}. Our proof essentially comes down to classifying ways that consistency could be violated and then showing that each of these potential possibilities leads to a contradiction. 
Our main result proves \cite[Conjecture 6.11]{GM} along with a variant of it (see Corollaries~\ref{cor:BBYacyclic} and~\ref{cor:BBYconsistent}). We leave the question of uniqueness as a conjecture (see Conjecture~\ref{conj:uniqueness} which is a variant of \cite[Conjecture 6.14]{GM}).



\subsection{Statement of the Main Theorem}

In this subsection, we give a high-level introduction of the main notions, then state our main theorem. The precise definitions can be found in later sections.


For an oriented regular matroid $M$ on ground set $E$, the sandpile group is defined as \[S(M) := \frac{\mathbb Z^E}{\Lambda(M) \oplus \Lambda^*(M)},\] where $\Lambda(M) \subseteq \mathbb{Z}^E$ is the lattice of flows, and $\Lambda^*(M) \subseteq \mathbb{Z}^E$ is the lattice of dual flows. 
Note that the sandpile group is generated by (classes of) arcs, which are oriented elements of $M$ interpreted as the standard basis elements of $\mathbb{Z}^E$ and their negations.

As described earlier, in the matroidal setting, we use a circuit-cocircuit signature as an auxiliary structure to define a sandpile torsor (the BBY torsor). A circuit-cocircuit signature is a collection of signed circuits and signed cocircuits such that for each circuit and cocircuit, we choose exactly one of the two signed circuits/cocircuits supported on the circuit/cocircuit. 
The aforementioned triangulating circuit-cocircuit signatures are signatures that satisfy certain ``non-overlapping'' condition (see Definition \ref{def:triangulating} for the rigorous definition). The family of triangulating circuit-cocircuit signatures are closed under deletion and contraction of signatures.


A triangulating circuit-cocircuit signature $(\sigma,\sigma^*)$ of $M$ first gives rise to a bijection (BBY bijection) $\BBY_{(M,\sigma,\sigma^*)}$ between bases of the matroid and intermediate objects known as {\em circuit-cocircuit equivalence classes} of orientations. The sandpile group of $M$ acts on these equivalence classes in a natural way. The BBY torsor is defined as the composition of this action and the BBY bijection: the result of $s\in S(M)$ acting on a basis $B_1$ is the unique basis $B_2$ such that $s\cdot\BBY_{(M,\sigma,\sigma^*)}(B_1)=\BBY_{(M,\sigma,\sigma^*)}(B_2)$, where $\cdot$ is the action of $S(M)$ on the circuit-cocircuit equivalence classes.

The consistency of the BBY torsor means that this action, defined using the BBY bijection and canonical action, behaves nicely with respect to deletion and contraction of both the matroid and the triangulating signature: the action of an arc on a basis can be descended to the deletion or contraction as long as it is well-defined.
We also require that the action of an arc only modifies a basis in the arc's connected component. More formally, this is the consistency of the BBY torsor for regular matroids (our main theorem):

\begin{thmx}\label{thm:consistency}
    Let $M$ be a regular matroid and $(\sigma,\sigma^*)$ be a triangulating circuit-cocircuit signature. Suppose that $\vec f$ is an arc and $B_1,B_2 \in \mathbf B(M)$ such that
    \[[\vec f] \cdot \BBY_{(M,\sigma,\sigma^*)}(B_1)=\BBY_{(M,\sigma,\sigma^*)}(B_2).\] Then, the following 3 properties must hold. 
\begin{enumerate}
    \item For any $e \in (B_1^c \cap B_2^c)\setminus f$, we have
    \[ [\vec f] \cdot \BBY_{(M\setminus e,\sigma\setminus e,\sigma^*\setminus e)}(B_1)=\BBY_{(M\setminus e,\sigma\setminus e,\sigma^*\setminus e)}(B_2).\]
    \item For any $e \in (B_1 \cap B_2)\setminus f$, we have
    \[ [\vec f] \cdot \BBY_{(M/ e,\sigma/ e,\sigma^*/ e)}(B_1\setminus e)=\BBY_{(M/ e,\sigma/ e,\sigma^*/ e)}(B_2\setminus e).\]
    \item For any $e\in E$ that is in a different connected component of $M$ than $f$, we have 
    \[e \in B_1 \iff e \in B_2.\]
\end{enumerate}
\end{thmx}

\subsection{Organization and Convention of the Paper}
In Section~\ref{sec:background}, we provide the necessary background to state our main theorem (Theorem~\ref{thm:consistency}). In Section~\ref{sec:torsors}, we describe the general notion of sandpile torsor algorithms and their consistency, and point out that it unifies several settings in the literature, e.g. acyclic circuit-cocircuit signatures and planar embeddings. Then, we specialize Theorem~\ref{thm:consistency} to deduce the consistency of the sandpile torsor algorithms in these instances. In Section~\ref{sec:action}, we prove a technical theorem which gives a complete description of the \emph{canonical action} of arcs on \emph{circuit-cocircuit minimal orientations} (Theorem~\ref{thm:generalDescription}). This theorem is then applied in Section~\ref{sec:main} to prove Theorem~\ref{thm:consistency}. 
Finally, in Section~\ref{sec:further}, we discuss the uniqueness conjecture and a possible strategy to attack it. 

In this paper, we use a linear algebraic approach to describe the oriented matroid structure of a regular matroid. In particular, we fix a totally unimodular realization $A$ of the matroid and view orientations and signed circuits/cocircuits as elements in $\mathbb{Z}^E$.
This approach does not lose any generality due to the following rigidity result.

\begin{thm} \cite[Corollary~7.9.4]{Oriented}
Let $M$ be a regular matroid.
Then all oriented matroid structures of $M$ can be obtained from reorienting an arbitrary totally unimodular (or more generally, real) matrix $A$ representing the underlying matroid of $M$.
\end{thm}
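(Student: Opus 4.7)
The plan is to use two classical ingredients. First, regularity of $M$ is equivalent to the existence of a totally unimodular representation, so we may fix such an $A$; this matrix induces an oriented matroid structure $\mathcal O_A$ on $M$ by reading off the signs of the minimal linear dependencies among its columns. The theorem then amounts to showing that every oriented matroid structure on $M$ is a reorientation of $\mathcal O_A$, i.e., is induced by the matrix obtained from $A$ by negating some subset of columns (which is still totally unimodular, since flipping the sign of a column preserves the $\{0,\pm 1\}$ range of all subdeterminants).

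The second ingredient is the classical characterization of regular matroids as the binary matroids whose oriented matroid structure is unique up to reorientation of the ground set (Bland--Las Vergnas; see Bj\"orner--Las Vergnas--Sturmfels--White--Ziegler, \emph{Oriented Matroids}, Chapter 7, in particular 7.9). The underlying idea is that in a binary matroid every unsigned circuit admits only two possible signings, differing by a global sign, and the signed circuit axioms impose enough compatibility among these choices that, for regular matroids, all compatible choices lie in a single reorientation orbit of the ground set. Concretely, once the signs of a fundamental system of circuits (with respect to a chosen basis) are fixed, the binary structure forces the signs on all remaining circuits, and the $2^{|E\setminus B|}$ possible sign patterns on the fundamental circuits are precisely accounted for by reorientations of the non-basis elements.

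Combining these two ingredients, every oriented matroid structure $\mathcal O$ on $M$ is a reorientation of $\mathcal O_A$, say by negating the elements in some subset $S\subseteq E$; this reoriented structure is exactly the oriented matroid of the totally unimodular matrix obtained from $A$ by negating the columns indexed by $S$. The extension to an arbitrary real representation $A$ follows because the oriented matroid induced by a real representation depends only on the sign pattern of its minimal column dependencies, and those sign patterns of any real representation of $M$ lie in the same reorientation class as those of any totally unimodular representation --- again by the binary-matroid uniqueness just invoked. The main obstacle is the second ingredient, for which I would simply cite the standard reference rather than reprove it; once that is in hand, the rest of the argument is bookkeeping about the correspondence between reorientation of the oriented matroid and column sign changes of the matrix, together with the easy observation that these operations preserve total unimodularity.
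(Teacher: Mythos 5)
The paper itself does not prove this statement; it simply cites \cite[Corollary~7.9.4]{Oriented} and moves on. Your proposal likewise defers the key ingredient (uniqueness of the oriented matroid structure of a regular/binary matroid up to reorientation) to the same standard reference, supplementing it with correct bookkeeping about column sign changes and total unimodularity, so it is consistent with and essentially equivalent to the paper's treatment.
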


Recall that reorienting an oriented matroid in the context of realization means negating some columns of the matrix, or equivalently, the signed circuits and cocircuits of the new oriented matroid differ from the original one by a universal sign flip on the respective coordinates.
It is easy to see that reorientations do not change the combinatorics we consider in this paper.

\subsection{Acknowledgement}

AM was partially supported by NSF grant DMS-2039316. LT was supported by the National Research, Development and Innovation Office of Hungary -- NKFIH, grant no.\ 132488, by the János Bolyai Research Scholarship of the Hungarian Academy of Sciences, and by the \'UNKP-23-5 New National Excellence Program of the Ministry for Innovation and Technology, Hungary. LT was also partially supported by the Counting in Sparse Graphs Lendület Research Group of the Alfr\'ed Rényi Institute of Mathematics.
CHY was supported by the Trond Mohn Foundation project ``Algebraic and Topological Cycles in Complex and Tropical Geometries'', the Danish National Research Foundation project DNRF151, and partially supported by the Ministry of Science and Technology of Taiwan project MOST 113-2115-M-A49-004-MY2 during his affiliation to the University of Oslo, the University of Copenhagen, and National Yang Ming Chiao Tung University, respectively.
All four authors thank Spencer Backman, Donggyu Kim, and the anonymous referees for their helpful comments.

\section{Background and Notation} \label{sec:background}


We call elements of $\Z^{E}$ \emph{(integral) 1-chain}s. 
For a 1-chain $\vec P$, and some $e \in E$, we write $\vec P \oriat{e}$ for the coefficient of $e$ in $\vec P$. We also write $P:= \{e \in E : \vec P \oriat{e} \not= 0\}$ for the \emph{support} of $\vec P$.
For $e\in E$, denote by $\vec P\setminus e$ the 1-chain in $\mathbb{Z}^{E\setminus e}$ obtained by restricting $\vec P$ to $E\setminus e$. 

An integral 1-chain is \emph{simple} if every coefficient is in $\{-1,0,1\}$. 
An \emph{arc} is a simple 1-chain whose support has only one element. We will write arcs in the form $\vec e$, where $e\in E$.

\subsection{Oriented Matroids and Regular Matroids}\label{sec:matroids} 

\hspace{1 pt} 

Let $A$ be an $r\times m$ {\em totally unimodular} matrix of full row rank, i.e., a matrix over the reals in which the determinant of every square submatrix is either $-1,1$, or $0$. $A$ will be fixed for the rest of this paper.
Let $E$ be a set that indexes the columns of $A$.

\begin{defn}
The {\em regular matroid} $M:=M(A)$ represented by $A$ consists of the set $E$ and the collection of subsets $\mathbf B(M)\subseteq\binom{E}{r}$ corresponding to the nonzero maximal minors of $A$. The elements of $\mathbf B(M)$ are called the \emph{bases} of $M$. 

A subset $C\subseteq E$ is a {\em circuit} of $M$ if it is an inclusionwise minimal subset that is not contained in any basis; a subset $C^*\subseteq E$ is a {\em cocircuit} of $M$ if it is an inclusionwise minimal subset that intersects every basis. We denote the set of circuits of $M$ by $\Circuits M$ and the set of cocircuits of $M$ by $\CoCircuits M$.
%
\end{defn}

The following lemma is well known, but we provide a proof for convenience. 
\begin{lemma}\cite[Lemma 8]{su2010lattice}\label{lem:ker row} 
For every $C \in \Circuits M$ (respectively, $C^* \in \CoCircuits M$), there exist exactly two simple 1-chains in $\ker(A)$ (respectively, $\row(A)$) whose support is $C$ (respectively, $C^*$).
Moreover, these two elements are the negation of each other.
\end{lemma}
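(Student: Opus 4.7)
My plan is to handle the circuit statement first and then deduce the cocircuit statement by matroid duality. For existence of a simple 1-chain in $\ker(A)$ supported on a given circuit $C$, I fix some $e \in C$; since $C$ is a \emph{minimal} dependent set, $C \setminus e$ is independent in $M$, so the corresponding columns of $A$ are linearly independent and can be extended to a basis $B$ of $M$. The column of $A$ indexed by $e$, call it $A_e$, admits a unique expansion $A_e = \sum_{f \in B} x_f A_f$, and by Cramer's rule each $x_f$ equals $\pm \det(A_B^{(f)}) / \det(A_B)$, where $A_B$ is the $r \times r$ submatrix of $A$ with columns indexed by $B$ and $A_B^{(f)}$ is obtained by replacing the $f$-th column of $A_B$ by $A_e$. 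Total unimodularity forces $\det(A_B) \in \{\pm 1\}$ and each $\det(A_B^{(f)}) \in \{-1, 0, 1\}$, so $x_f \in \{-1, 0, 1\}$. Defining $\vec P \oriat e := -1$, $\vec P \oriat f := x_f$ for $f \in B$, and zero otherwise yields a simple element of $\ker(A)$. Moreover, because $C$ is a circuit there exists a linear dependence among $\{A_f : f \in C\}$ with all coefficients nonzero, so by uniqueness of the expansion we get $x_f = 0$ for $f \in B \setminus (C \setminus e)$ and $x_f \neq 0$ for $f \in C \setminus e$; hence $\supp(\vec P) = C$.

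For uniqueness up to sign, let $\vec P$ and $\vec Q$ be two simple 1-chains in $\ker(A)$ with support $C$. Pick any $e \in C$ and form $\vec R := \vec P \oriat e \cdot \vec Q - \vec Q \oriat e \cdot \vec P \in \ker(A)$; by construction $\vec R \oriat e = 0$, so $\supp(\vec R) \subsetneq C$, and minimality of $C$ forces $\vec R = 0$. Thus $\vec P$ and $\vec Q$ are nonzero scalar multiples of each other, and since both are simple the scalar must lie in $\{\pm 1\}$.

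For the cocircuit case I would invoke matroid duality: the dual matroid $M^*$ admits a totally unimodular representation $A^*$ (for instance, starting from a standard form $[I_r \mid D]$ for $A$ one takes $A^* = [-D^\top \mid I_{m-r}]$), and a direct check gives $\ker(A^*) = \row(A)$, while the cocircuits of $M$ are exactly the circuits of $M^*$. Applying the circuit half of the lemma to $A^*$ yields the desired statement for $\row(A)$. The only genuinely delicate step in the whole argument is the Cramer's rule computation that promotes an a priori rational kernel element to a $\{-1,0,1\}$-valued one; total unimodularity is exactly the hypothesis that makes this possible, and without it one only obtains uniqueness up to a nonzero scalar rather than up to sign.
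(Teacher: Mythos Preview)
Your proof is correct and follows essentially the same approach as the paper: Cramer's rule together with total unimodularity for existence of a $\{-1,0,1\}$-valued kernel element, and minimality of the circuit for uniqueness up to scalar (hence up to sign among simple 1-chains). The paper's proof is a terse sketch of exactly these two steps, whereas you have filled in the details explicitly, including the verification that the support is precisely $C$. For the cocircuit case the paper simply says ``the argument is similar,'' meaning one repeats the same reasoning with $\row(A)$ in place of $\ker(A)$; your route via an explicit dual representation $A^* = [-D^\top \mid I_{m-r}]$ and the identity $\ker(A^*) = \row(A)$ is a legitimate alternative, though it tacitly uses that bringing $A$ to standard form and forming $A^*$ preserves total unimodularity.
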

\begin{proof}
It follows from Cramer's rule, and the fact that $A$ is totally unimodular, that for any circuit $C$, there exists an element in $\ker(A)$ with support $C$ whose coordinates are in $\{-1,0,1\}$. The minimality in the definition of a circuit ensures that for those elements of $\ker(A)$ whose support is $C$, the coordinates are unique up to scalar multiplication. Hence, there are only two such elements with coordinates in $\{-1,0,1\}$, and they are the negations of each other. The argument is similar when working with cocircuits. 
\end{proof}

We call a simple 1-chain in $\ker(A)$ (resp. $\row(A)$) whose support is a circuit (resp. cocircuit) a \emph{signed circuit} (resp. \emph{signed cocircuit}) of $M$.  
The collection of signed circuits and signed cocircuits of $M$ are denoted by $\vec{\mathbf C}(M)$ and $\vec{\mathbf C^*}(M)$ respectively.

The following result gives a useful property of 1-chains in $\ker(A)$ or $\row(A)$ that is specific to regular matroids (or equivalently, when $A$ is totally unimodular). 

\begin{lemma}\cite[Lemma~4.1.1.]{BBY}\label{lem:decompose}
Let $A$ be a totally unimodular matrix, and $\vec P$ be a $1$-chain. 
\begin{enumerate}
    \item If $\vec P\in\ker(A)$, then $\vec P$ can be written as the sum of a collection of signed circuits $\vec C$ such that $\vec P \oriat{e}\cdot \vec C \oriat{e}>0$ for any $e\in C$.
    In particular, if $\vec P$ is a simple $1$-chain in $\ker(A)$, then $\vec P$ can be written as a sum of signed circuits of disjoint support. 
    \item If $\vec P\in\row(A)$, then $\vec P$ can be written as the sum of a collection of signed cocircuits $\vec{C^*}$ such that $\vec P \oriat{e}\cdot \vec{C^*} \oriat{e}>0$ for any $e\in C^*$.
    In particular, if $\vec P$ is a simple $1$-chain in $\row(A)$, then $\vec P$ can be written as a sum of signed cocircuits of disjoint support.
\end{enumerate}
\end{lemma}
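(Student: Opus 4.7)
The strategy is induction on $\|\vec P\|_1$, the $L^1$ norm of the 1-chain; part (2) follows by an entirely analogous argument with $\ker(A)$ replaced by $\row(A)$ and signed circuits replaced by signed cocircuits, invoking the cocircuit half of Lemma~\ref{lem:ker row} as the building block. The base case $\vec P = 0$ is the empty sum. For the inductive step, the crucial ingredient is the following existence claim: for any nonzero $\vec P \in \ker(A)$ and any $e \in \supp(\vec P)$, there exists a signed circuit $\vec C \in \vec{\mathbf C}(M)$ with $e \in C$ and $\vec P\oriat{f}\cdot\vec C\oriat{f} \geq 0$ for every $f \in E$. Given such $\vec C$, the 1-chain $\vec P - \vec C$ still lies in $\ker(A)$, still has only coordinates of the same weak sign as $\vec P$ (since $|\vec C\oriat{f}| \leq 1 \leq |\vec P\oriat{f}|$ whenever both are nonzero and they are conformal), and satisfies $\|\vec P - \vec C\|_1 = \|\vec P\|_1 - |C| < \|\vec P\|_1$; the inductive hypothesis then decomposes $\vec P - \vec C$ conformally, and prepending $\vec C$ completes the decomposition.

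To prove the existence claim, the first move is to reduce to the nonnegative case: let $A'$ be obtained from $A$ by negating each column $f$ with $\vec P\oriat{f} < 0$, so $A'$ is still totally unimodular and $\vec P' := (|\vec P\oriat{f}|)_{f \in E}$ is nonnegative with $A'\vec P' = 0$ and $\vec P'\oriat{e} > 0$. It now suffices to produce a nonnegative signed circuit of $A'$ whose support is a circuit of $M$ contained in $\supp(\vec P')$ and containing $e$. Consider the polyhedral cone $K = \{\, x \in \R^E : A'x = 0,\ x \geq 0,\ x_f = 0 \text{ for } f \notin \supp(\vec P') \,\}$, which contains $\vec P'$. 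By Minkowski--Weyl, $K$ is the nonnegative hull of its finitely many extreme rays. A short exchange argument shows that each extreme ray has support equal to some circuit of $M$: were the support to strictly contain a circuit $C'$, one could subtract a small multiple of the signed circuit on $C'$ (with appropriate sign) to reduce the support while staying in $K$, contradicting extremality. Total unimodularity then forces each extreme ray to be, up to a positive scalar, the unique nonnegative signed circuit of $A'$ supported on that circuit. Writing $\vec P'$ as a nonnegative combination of these extreme rays and using $\vec P'\oriat{e} > 0$, at least one extreme ray is nonzero at $e$; reversing the reorientation yields the desired signed circuit $\vec C$ of $A$.

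The ``in particular'' assertion is then immediate: when $\vec P$ is simple, at each $f \in C$ the values $\vec P\oriat{f}$ and $\vec C\oriat{f}$ are both $\pm 1$ with matching signs, hence equal, so $(\vec P - \vec C)\oriat{f} = 0$; thus $\supp(\vec P - \vec C)$ is disjoint from $C$, and the disjoint-support property propagates through the recursion. The main obstacle is the existence of the conformal signed circuit through a prescribed $e$: one cannot simply take an arbitrary circuit of $\supp(\vec P)$ through $e$, because its signed circuit may fail to be conformal at some coordinate, and naive elimination attempts can inadvertently expel $e$ from the support. The polyhedral / total-unimodularity route above sidesteps this by letting the structure of the extreme rays of $K$ do the combinatorics; more combinatorial approaches via iterated conformal vector elimination in the oriented matroid of $A$ are also possible but considerably more delicate.
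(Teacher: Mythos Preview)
The paper does not supply its own proof of this lemma; it is simply cited from \cite[Lemma~4.1.1]{BBY}. So there is no in-paper argument to compare against, and the question becomes whether your proof stands on its own.

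Your overall strategy is correct and standard: induct on $\|\vec P\|_1$, peel off one conformal signed circuit at a time, and observe that in the simple case each peeling removes the circuit's support entirely. The polyhedral route to the key existence claim (reorient to make $\vec P'$ nonnegative, look at the pointed cone $K=\{x:A'x=0,\ x\ge 0,\ x_f=0\text{ off }\supp(\vec P')\}$, and read off a conformal signed circuit from an extreme ray hitting $e$) is a legitimate and efficient way to produce the needed circuit through a prescribed element.

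One phrasing issue: your sentence ``one could subtract a small multiple of the signed circuit on $C'$ \dots\ to reduce the support while staying in $K$, contradicting extremality'' conflates two different things. Subtracting a \emph{small} multiple does not reduce the support; subtracting the \emph{critical} multiple that first zeroes a coordinate does reduce the support but by itself does not contradict extremality. The clean argument is: for small $\epsilon>0$, both $v+\epsilon\vec{C'}$ and $v-\epsilon\vec{C'}$ lie in $K$ (since $C'\subseteq\supp(v)$), and if $C'\subsetneq\supp(v)$ then $\vec{C'}$ is not parallel to $v$, so $v=\tfrac12\bigl[(v+\epsilon\vec{C'})+(v-\epsilon\vec{C'})\bigr]$ exhibits $v$ as a midpoint of two non-parallel cone elements, contradicting that $v$ spans an extreme ray. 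With that correction the argument is complete.
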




The orthogonality between the kernel and row space of a matrix, hence between signed circuits and signed cocircuits, implies the following fact immediately.

\begin{lemma}\label{lem:ccint} Suppose $\vec C$ is a signed circuit and $\vec {C^*}$ is a signed cocircuit of a regular matroid. The number of $x \in C \cap C^*$ such that $\vec C\oriat{x} = \vec {C^*} \oriat{x}$ is equal to the number of $y \in C \cap C^*$ such that $\vec C\oriat{y} = -\vec {C^*} \oriat{y}$
\end{lemma}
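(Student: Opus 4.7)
The plan is to derive this equality as a direct consequence of the fact that $\ker(A)$ and $\row(A)$ are orthogonal complements under the standard inner product on $\mathbb{R}^E$. Since $\vec C$ is by definition a simple $1$-chain in $\ker(A)$ and $\vec{C^*}$ is a simple $1$-chain in $\row(A)$, we have $\langle \vec C, \vec{C^*}\rangle = 0$.

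The key step is to expand this inner product coordinatewise. For $e \notin C$, $\vec C\oriat{e} = 0$, and for $e \notin C^*$, $\vec{C^*}\oriat{e} = 0$, so only indices $e \in C \cap C^*$ contribute. Moreover, at each such $e$, both $\vec C\oriat{e}$ and $\vec{C^*}\oriat{e}$ lie in $\{-1,+1\}$ because the $1$-chains are simple (and are nonzero on their support). Hence $\vec C\oriat{e}\cdot \vec{C^*}\oriat{e} = +1$ exactly when the signs agree and $-1$ exactly when they disagree. Writing $k_+$ and $k_-$ for the respective counts, the orthogonality relation becomes $0 = k_+ - k_-$, which is precisely the claimed statement.

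There is no real obstacle here beyond a careful bookkeeping of which coordinates can contribute; the lemma is essentially the discrete form of orthogonality between signed circuits and signed cocircuits, specialized to the regular (hence totally unimodular) setting where coordinates are $\pm 1$ on the supports. In particular, total unimodularity of $A$ is only used implicitly through Lemma~\ref{lem:ker row}, which guarantees that the $\pm 1$ normalization of signed circuits and cocircuits is well defined.
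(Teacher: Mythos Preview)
Your proof is correct and takes essentially the same approach as the paper: the paper simply states that the lemma follows immediately from the orthogonality of $\ker(A)$ and $\row(A)$, and your argument spells out exactly that computation.
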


Another fundamental (and useful) notion is the dual of an oriented matroid.

\begin{defn} \label{def:dual_OM}
The \textit{dual} of an oriented matroid $M$, denoted by $M^*$, is an oriented matroid on the same ground set $E$ whose bases are $\{E\setminus B: B\in\mathbf{B}(M)\}$, and that $\SignedCircuits{M^*}=\SignedCoCircuits{M}$ and $\SignedCoCircuits{M^*}=\SignedCircuits{M}$.
\end{defn}
It is a standard result (e.g., \cite[Section~7.9]{Oriented}) that the dual of an oriented regular matroid exists, and it is also regular.

A matroid is said to be \emph{connected} if for any two elements of the ground set, there is a circuit containing both of them. The property of having a circuit containing two given elements is an equivalence relation on the elements, and its classes are called the \emph{connected components} of the matroid.



An \emph{orientation} is a map from $E$ to $\{-,+\}$. Adopting the usual convention, we write $\ori{O} \oriat{x}$ for the value of the orientation at $x$. Note that we do not think of orientations as integral 1-chains, even though the concepts are related. We denote by $\Orientations{M}$ the set of all orientations of $M$. 

\begin{defn}\label{def:reverse} 
Let $\ori O$ be an orientation and $P$ be a subset of $E$. Then we write $\reverse{P}{\ori{O}}$ for the orientation obtained by switching the sign of $\ori O$ at each element of $P$. In other words, for $x \in E$, we have
\[ \reverse{P} {\ori{O}}\oriat{x}  = \begin{cases}- \ori{O}\oriat{x}  & \text{if $x \in P$,}\\  \ori{O}\oriat{x}  & \text{if $x \not\in P$.}\end{cases}\]
\end{defn}

\begin{defn}\label{def:oricompat}
    Let $\ori O$ be an orientation and $\vec P$ be a 1-chain. We say that $\vec P$ is \emph{compatible with $\ori O$} if for all $f \in P$, the sign of $\vec P\oriat{f}$ matches the sign of $\ori O \oriat{f}$. We denote compatibility by writing $ \vec P\compat \ori O$.
\end{defn}

Let $\ori O \in \Orientations{M}$ and $\vec C$ be a signed circuit that is compatible with $\ori O$. We say that $\reverse{C} {\ori O}$ is a \emph{circuit reversal} of $\ori O$. 
Define \emph{cocircuit reversals} analogously.
Two orientations $\ori {O}_1$ and $\ori {O}_2$ differ by \emph{circuit-cocircuit reversals} if $\ori {O}_1$ can be sent to $\ori {O}_2$ by a sequence of circuit and/or cocircuit reversals. It is easy to show that this is an equivalence relation on $\Orientations{M}$. 


More generally, the definition of circuit and cocircuit reversals can also be applied to any simple $1$-chain $\vec P$: If $\vec P \compat \ori O$, then $\reverse{P}{\ori{O}}$ is a \emph{reversal} of $\vec P$. Note that Definition~\ref{def:reverse} only concerns 
a subset of $E$, while here, we talk about a 1-chain. 
\begin{ex}
    Take the graphic matroid on Figure \ref{fig:Changxin's_graph}, and take the orientation $\ori O$ with $\ori O\oriat{f_1} = +$, $\ori O\oriat{f_2} = -$, $\ori O\oriat{f_3} = +$ and $\ori O\oriat{f_4} = +$ (in short, $(+,-,+,+)$; we will use this shorthand throughout). The signed circuit $\vec C=\vec {f_1} - \vec {f_2} + \vec {f_3}$ is compatible with $\ori O$. By reversing $\vec C$, we get the orientation $(-,+,-,+)$.
\end{ex}

\begin{defn}The \emph{circuit-cocircuit equivalence classes} of $M$ are the orientations of $M$ modulo the equivalence relation defined in the previous paragraph. The set of these equivalence classes is denoted $\ccequiv(M)$. For any element $\ori O \in \Orientations M$, we write $[\ori O]$ for the equivalence class of $\ccequiv(M)$ containing $\ori O$.
\end{defn}

The set $\ccequiv(M)$ was first explored by Gioan~\cite{Gioan, Gioan2}, and it serves as an intermediate object to define the BBY action because of the following enumerative fact. 

\begin{figure}
    \begin{center}
    \begin{tikzpicture}[scale=0.8]
    \tikzstyle{o}=[circle,fill,scale=.5,draw]
	\begin{scope}[shift={(-4,0)}]
		\node [o] (1) at (0,0) {};
	\node [o] (2) at (1,1.8) {};
	\node [o] (3) at (2,0) {};
	\draw [thick,->,>=stealth'] (2) to node[fill = white,inner sep=1pt]{\footnotesize $f_1$} (1);
	\draw [thick,->,>=stealth',bend left=30] (1) to node[fill = white,inner sep=1pt]{\footnotesize $f_3$} (3);
	\draw [thick,->,>=stealth',bend right=30] (1) to node[fill = white,inner sep=1pt]{\footnotesize $f_4$} (3);
	\draw [thick,->,>=stealth'] (2) to node[fill = white,inner sep=1pt]{\footnotesize $f_2$} (3);
 
	\end{scope}

    \begin{scope}[shift={(4, 0.8)}]
	\node at (0,0) {$\begin{bmatrix}
	        1 & 0 & -1 & -1 \\
            -1& -1&  0 & 0  \\
            0 & 1 &  1 & 1
	\end{bmatrix}$};
    \end{scope}
\end{tikzpicture}
\end{center}
\caption{A graph (graphic matroid) and its corresponding representing matrix.}\label{fig:Changxin's_graph}
\end{figure}
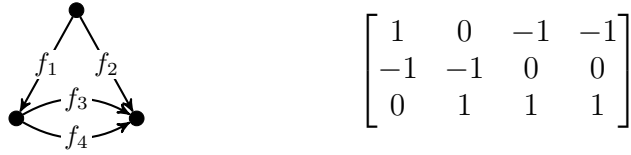
\begin{thm} \cite{Gioan2}
For a regular matroid $M$, we have $|\ccequiv(M)|=|\mathbf{B}(M)|$.
\end{thm}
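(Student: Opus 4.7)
The plan is to prove this enumerative identity by induction on $|E|$, showing both sides satisfy the same deletion-contraction recurrence with matching base cases.

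The base case handles $M$ whose ground set consists entirely of loops and coloops. Each loop $\{e\}$ is a circuit and each coloop $\{e\}$ is a cocircuit, so every coordinate can be flipped individually by a single-element reversal; thus all $2^{|E|}$ orientations collapse into a single equivalence class, yielding $|\ccequiv(M)|=1$. On the basis side, the unique basis is the set of coloops, so $|\mathbf{B}(M)|=1$ as well.

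For the inductive step, fix an element $e$ that is neither a loop nor a coloop. The standard recurrence $|\mathbf{B}(M)| = |\mathbf{B}(M/e)| + |\mathbf{B}(M\setminus e)|$ splits bases by whether they contain $e$. I would prove the matching recurrence $|\ccequiv(M)| = |\ccequiv(M/e)| + |\ccequiv(M\setminus e)|$ by producing a bijection from $\ccequiv(M)$ onto $\ccequiv(M/e)\sqcup\ccequiv(M\setminus e)$. Partition $\ccequiv(M)$ into \emph{$e$-active} classes, those in which both signs of $e$ appear among representatives (equivalently, some reversal within the class uses a signed circuit or cocircuit containing $e$), and \emph{$e$-passive} classes, in which $e$ retains a fixed sign throughout. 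Send each $e$-active class to a class in $\ccequiv(M/e)$ and each $e$-passive class to a class in $\ccequiv(M\setminus e)$, in both cases via the restriction $\ori O \mapsto \ori O|_{E\setminus e}$.

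The main obstacle is verifying this assignment is a bijection. The key tool is Lemma~\ref{lem:decompose}: restricting a signed circuit of $M$ containing $e$ to $E \setminus e$ yields a simple $1$-chain in the kernel of the matrix representing $M/e$, which decomposes into signed circuits of $M/e$ of pairwise disjoint support, so reversals in $M$ involving $e$ descend to well-defined reversal sequences in $M/e$. Analogously, restrictions of signed cocircuits in $M$ containing $e$ descend to signed cocircuit reversals in $M\setminus e$ by the dual statement of Lemma~\ref{lem:decompose}. Showing that inequivalent classes remain inequivalent requires the reverse direction: reversals in $M\setminus e$ lift directly, since signed circuits and cocircuits of $M\setminus e$ avoiding $e$ pull back to those of $M$, while reversals in $M/e$ must be lifted by pairing with an auxiliary reversal across $e$ to produce a well-defined lift in $M$. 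The total unimodularity of $A$ underlies these lifting arguments by guaranteeing the existence of the relevant simple $1$-chains in $\ker(A)$ and $\row(A)$.
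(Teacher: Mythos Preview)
The paper itself does not prove this theorem; it is cited from \cite{Gioan2}. So there is no in-paper proof to compare against, and the relevant question is simply whether your argument works.

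Your deletion-contraction strategy is the right shape, but the partition you chose is fatally flawed. You define a class to be \emph{$e$-passive} when the sign of $e$ is constant across all representatives, and \emph{$e$-active} otherwise. However, Lemma~\ref{lem:circorcocirc} guarantees that for \emph{every} orientation $\ori O$ and every element $e$, there is a compatible signed circuit or signed cocircuit through $e$; reversing it flips the sign of $e$ while staying in the same class. Consequently both signs of $e$ occur in every class of $\ccequiv(M)$, so every class is $e$-active under your definition. Your bijection would then send all of $\ccequiv(M)$ into $\ccequiv(M/e)$, forcing $|\ccequiv(M)| = |\ccequiv(M/e)|$, which is false whenever $e$ is not a coloop.

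The partition Gioan actually uses is the one suggested by the dichotomy in Lemma~\ref{lem:circorcocirc}: declare a class \emph{$e$-cyclic} if $e$ lies in a compatible signed circuit for some (equivalently, every) representative, and \emph{$e$-acyclic} otherwise. The nontrivial fact making this well-defined is that the cyclic/acyclic status of $e$ is invariant under circuit and cocircuit reversals; this is exactly the observation invoked at the start of the proof of Lemma~\ref{lem:disjoint} in the paper. With that partition, $e$-cyclic classes go to $\ccequiv(M/e)$ and $e$-acyclic classes to $\ccequiv(M\setminus e)$, and your lifting arguments via Lemma~\ref{lem:decompose} can then be made to work. So the skeleton of your inductive step is salvageable, but you must replace the active/passive dichotomy with the cyclic/acyclic one and supply the invariance lemma.
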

Moreover, the sandpile group has a natural canonical torsor structure on $\ccequiv(M)$ (described in the next section). To define a bijection between $\ccequiv(M)$ and $\mathbf{B}(M)$ one needs some auxiliary structure. The BBY bijection (described in Section \ref{sec:BBY_bij}) is one such bijection using a circuit-cocircuit signature.

The following lemma is useful for exploring the set $\ccequiv(M)$. 
\begin{lemma}\cite[Corollary 3.4.6]{Oriented}\label{lem:circorcocirc}
    Consider $\ori O \in \Orientations M$ and $x \in E$. Precisely one of the following is true.
    \begin{enumerate}
        \item These exists some $\vec C \in \SignedCircuits{M}$ such that $x \in C$ and $\vec C \compat \ori O$.
        \item These exists some $\vec {C^*} \in \SignedCoCircuits{M}$ such that $x \in C^*$ and $\vec {C^*} \compat \ori O$.
    \end{enumerate}
\end{lemma}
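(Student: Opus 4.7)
The plan is to establish the dichotomy by proving separately that (1) and (2) are mutually exclusive and that at least one of them holds.

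For the mutual exclusion, suppose $\vec C \in \SignedCircuits{M}$ and $\vec{C^*} \in \SignedCoCircuits{M}$ are both compatible with $\ori O$ and both contain $x$. By Definition~\ref{def:oricompat}, for every $e \in C \cap C^*$ both $\vec C\oriat{e}$ and $\vec{C^*}\oriat{e}$ equal $\ori O\oriat{e}$, so $\vec C\oriat{e} = \vec{C^*}\oriat{e}$. Hence every element of $C \cap C^*$ is a sign-agreement and there are no sign-disagreements. But $x \in C \cap C^*$, so $|C \cap C^*| \geq 1$, contradicting Lemma~\ref{lem:ccint}, which demands equality between the agreement and disagreement counts.

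For existence, after negating the appropriate columns of $A$ (a harmless reorientation) we may assume $\ori O \equiv +$, so that compatibility becomes component-wise non-negativity. The task reduces to producing either $v \in \ker(A)$ with $v \geq 0$ and $v_x > 0$, or $w \in \row(A)$ with $w \geq 0$ and $w_x > 0$. This is a standard Farkas-type alternative applied to the linear program $\max\, v_x$ subject to $Av = 0$ and $v \geq 0$: if it is unbounded, any feasible solution of positive objective value gives the first option; otherwise its optimum equals $0$, and by strong LP duality the dual system $A^T y \geq e_x$ is feasible, supplying $w := A^T y \in \row(A)$ as the second option.

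Since $A$ has integer entries (being totally unimodular), the vector produced by LP duality may be chosen rational and then rescaled to an integral $1$-chain $\vec P$, either in $\ker(A)$ or in $\row(A)$, satisfying $\vec P\oriat{x} > 0$ and $\vec P\oriat{e} \geq 0$ for all $e$. Applying Lemma~\ref{lem:decompose} decomposes $\vec P$ as a sum of signed circuits (respectively, signed cocircuits) each agreeing in sign with $\vec P$ on its support, hence each compatible with the reoriented $\ori O$. Strict positivity at $x$ forces at least one summand to contain $x$, and this summand is the desired signed circuit (respectively, cocircuit). Reversing the initial column negations preserves supports and compatibility, completing the argument.

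The main delicacy is the passage from a real vector in $\ker(A)$ or $\row(A)$ to an honest signed (co)circuit supported on $x$; this rests on the sign-preserving decomposition of Lemma~\ref{lem:decompose} and on the observation that strict positivity of a coordinate persists under it. The LP-duality step itself is entirely standard once the LP is correctly set up.
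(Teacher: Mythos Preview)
The paper does not supply its own proof of this lemma; it simply cites \cite[Corollary~3.4.6]{Oriented}, where the result is obtained as a special case of the combinatorial 3-painting lemma for general oriented matroids (stated here as Lemma~\ref{lem:3painting}). Your proof is correct and, in the context of this paper, is arguably more natural: since a totally unimodular realization $A$ is fixed throughout, you can invoke LP duality directly rather than appealing to abstract oriented matroid axioms.

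Your mutual-exclusion argument via Lemma~\ref{lem:ccint} is clean and matches the standard orthogonality reasoning. For existence, your LP is set up correctly: the primal $\max v_x$ over $\{v : Av = 0,\ v \geq 0\}$ is a cone program, always feasible at $v=0$, hence either unbounded (yielding the circuit side) or optimal at $0$, in which case strong duality gives $y$ with $A^T y \geq e_x$ and hence $w = A^T y \in \row(A)$ with $w \geq 0$, $w_x \geq 1$. The passage to an integral $1$-chain by clearing denominators is valid since both $\ker(A)$ and $\row(A)$ are closed under scaling, and Lemma~\ref{lem:decompose} then extracts a signed (co)circuit through $x$ with the correct signs. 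The reorientation step at the beginning and end is harmless, as you note.

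In short: the paper offers no proof to compare against, and your argument is sound. The LP-duality route is a genuine alternative to the axiomatic proof in \cite{Oriented}; it is less general (it needs a realization) but fits the paper's linear-algebraic framework and reuses Lemma~\ref{lem:decompose} rather than importing the full 3-painting machinery.
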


\begin{cor}\label{cor:arccompat}
    Consider an arc $\vec f$ and an orientation $\ori O \in \Orientations M$. There exists some orientation $\ori O'\in \Orientations M$ such that $\vec{f} \compat \ori O'$ and $[\ori O'] = [\ori O]$. 
\end{cor}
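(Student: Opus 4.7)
The plan is to split into cases based on the current sign of $\ori{O}$ at the element $f$ underlying the arc $\vec{f}$.

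First, if $\vec{f}$ is already compatible with $\ori{O}$ (i.e., $\ori{O}\oriat{f}$ has the same sign as $\vec{f}\oriat{f}$), then we can simply take $\ori{O}' = \ori{O}$ and there is nothing to prove.

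Otherwise, we have $\ori{O}\oriat{f} = -\vec{f}\oriat{f}$, and we need to produce an orientation in the same circuit-cocircuit equivalence class as $\ori{O}$ whose sign at $f$ is flipped. Here I would invoke Lemma \ref{lem:circorcocirc} applied to the element $f$ and orientation $\ori{O}$: exactly one of the following holds, either there is a signed circuit $\vec{C} \compat \ori{O}$ with $f \in C$, or there is a signed cocircuit $\vec{C^*} \compat \ori{O}$ with $f \in C^*$. In the former case, set $\ori{O}' := \reverse{C}{\ori{O}}$, which lies in $[\ori{O}]$ by the definition of the circuit-cocircuit equivalence relation. In the latter case, set $\ori{O}' := \reverse{C^*}{\ori{O}}$ instead, which again lies in $[\ori{O}]$.

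In either subcase, by Definition \ref{def:reverse}, since $f \in C$ (resp.\ $f \in C^*$), we have $\ori{O}'\oriat{f} = -\ori{O}\oriat{f} = \vec{f}\oriat{f}$, so $\vec{f} \compat \ori{O}'$, as required. There is no serious obstacle here: the statement is essentially just a direct application of Lemma \ref{lem:circorcocirc} together with the definition of circuit/cocircuit reversals.
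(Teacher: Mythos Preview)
Your proof is correct and follows essentially the same approach as the paper: handle the trivial case $\vec{f} \compat \ori{O}$ directly, and otherwise apply Lemma~\ref{lem:circorcocirc} at $f$ to find a compatible signed circuit or cocircuit through $f$ and reverse it. The paper's proof is slightly terser but identical in substance.
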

\begin{proof}
    If $\vec{f} \compat \ori O$, then we simply let $\ori O' = \ori O$. Otherwise, we apply Lemma~\ref{lem:circorcocirc} with respect to the element $f$ and set $\ori O' = \reverse{C}\ori O$ or $\ori O' = \reverse{C^*} \ori O$ depending on which case of the lemma applies.
\end{proof}

\subsection{The sandpile group and its canonical action on $\ccequiv(M)$}\hspace{1 pt} \label{sec:BBY_act}

\begin{defn}Let $\Lambda(M) \subseteq \mathbb{Z}^E$ be the lattice generated $\SignedCircuits{M}$ and $\Lambda^*(M) \subseteq \mathbb{Z}^E$ be the lattice generated by $\SignedCoCircuits{M}$. The \emph{sandpile group} of $M$ is defined by:
\[ S(M) := \frac{\mathbb Z^E}{\Lambda(M) \oplus \Lambda^*(M)}.\]
\end{defn}

The sandpile group also has many other names such as the \emph{Jacobian group} or \emph{critical group}. For a 1-chain $\vec{P}$, we write $[\vec P]$ for the equivalence class of $S(M)$ containing $\vec{P}$. Note that the sandpile group $S(M)$ is generated by elements $\{[\vec f]\mid  \vec f \text{ is an arc of }M\}$.
In \cite{BBY}, the authors define a natural group action of $S(M)$ on the set $\ccequiv(M)$, which generalizes the additive action in the more classical graphical case where elements of $S(M)$ and $\ccequiv(M)$ are represented as ``chip configurations'' \cite{Backman17}. For details on the ``chip'' perspective, see~\cite{Klivans}.

\begin{defn}\cite{BBY}\label{def:groupaction}
    The \emph{canonical action} of $S(M)$ on $\ccequiv(M)$ is defined by linearly extending the following action of each generator $[\vec f]$ on circuit-cocircuit reversal classes. 
    \begin{enumerate}
    \item Suppose we are given an arc $\vec f$ and some $\ori G \in \ccequiv(M)$.
    \item By Corollary~\ref{cor:arccompat}, there must exist some $\ori O\in \ori G$ such that $-\vec f \compat \ori O$.
    \item Define the action by $[\vec f]\cdot \ori G=[ \reverse{f} \ori O]$.
    \end{enumerate}
\end{defn}


\begin{thm}\cite[Theorem 4.3.1.]{BBY}\label{thm:simplytransitive} The canonical action is well-defined and simply transitive. 
\end{thm}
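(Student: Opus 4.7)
The plan is to package the action into the language of group translations via an explicit encoding of orientations. First I would define $\phi: \Orientations{M} \to \mathbb{Z}^E$ by
\[ \phi(\ori O) := \sum_{e \in E:\ \ori O\oriat{e} = +} \vec e. \]
A direct check shows that if $\vec C$ is a signed circuit with $\vec C \compat \ori O$, then $\phi(\reverse{C}{\ori O}) = \phi(\ori O) - \vec C$, and analogously $\phi(\reverse{C^*}{\ori O}) = \phi(\ori O) - \vec{C^*}$ for a compatible signed cocircuit. Hence $\phi$ descends to a well-defined map
\[ \bar\phi: \ccequiv(M) \longrightarrow \mathbb{Z}^E/(\Lambda(M) \oplus \Lambda^*(M)) = S(M). \]

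The crux of the proof is verifying that $\bar\phi$ is a bijection. Gioan's theorem cited above, combined with Merino's identity $|S(M)| = |\mathbf B(M)|$, yields $|\ccequiv(M)| = |S(M)|$, so it suffices to prove injectivity. For injectivity, I would show that whenever $\phi(\ori O_1) - \phi(\ori O_2) \in \Lambda(M) \oplus \Lambda^*(M)$, the orientations $\ori O_1$ and $\ori O_2$ can be connected by a sequence of circuit and cocircuit reversals. The plan is to induct on the support of the simple $1$-chain $\phi(\ori O_1) - \phi(\ori O_2)$, realizing a witnessing decomposition of this difference as a chain of reversals, each performed on an orientation for which the chosen signed circuit or cocircuit is compatible. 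The main obstacle is that Lemma~\ref{lem:decompose} decomposes pure elements of $\Lambda$ or $\Lambda^*$ into compatible signed circuits or cocircuits separately, whereas a general element of $\Lambda \oplus \Lambda^*$ requires disentangling the two parts so that each piece can serve as the next reversal; some care is needed to choose the disentangling so that the intermediate orientations remain consistent with the decomposition.

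Once $\bar\phi$ is known to be a bijection, the rest is formal. For well-definedness of the generator action, note that if $\ori O_1, \ori O_2 \in \ori G$ both satisfy $-\vec f \compat \ori O_i$, then $\phi(\reverse{f}{\ori O_i}) = \phi(\ori O_i) + \vec f$, so $\bar\phi([\reverse{f}{\ori O_1}]) = \bar\phi([\reverse{f}{\ori O_2}])$ in $S(M)$, and injectivity of $\bar\phi$ yields $[\reverse{f}{\ori O_1}] = [\reverse{f}{\ori O_2}]$ in $\ccequiv(M)$. The identity $\bar\phi([\vec f] \cdot \ori G) = \bar\phi(\ori G) + [\vec f]$ then shows that the generator action transports via $\bar\phi$ to translation by $[\vec f]$ on $S(M)$. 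Since translation is a well-defined, simply transitive group action of $S(M)$ on itself, transporting back along the bijection $\bar\phi$ extends the generator action to a well-defined, simply transitive group action of $S(M)$ on $\ccequiv(M)$, completing the proof.
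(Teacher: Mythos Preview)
The paper does not prove this statement; it is quoted as \cite[Theorem~4.3.1]{BBY} and used as a black box. So there is no in-paper argument to compare your proposal against, and I evaluate it on its own terms.

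Your overall architecture is sound and is essentially the standard one: encode orientations by $\phi$, check that circuit/cocircuit reversals shift $\phi$ by elements of $\Lambda\oplus\Lambda^*$ so that $\bar\phi$ is well defined, verify the identity $\bar\phi([\vec f]\cdot\ori G)=\bar\phi(\ori G)+[\vec f]$, and then pull back the translation action through the bijection $\bar\phi$. All the formal deductions you draw from bijectivity of $\bar\phi$ (well-definedness of the generator action, extension to a group action, simple transitivity) are correct.

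The genuine gap is the injectivity of $\bar\phi$, and your sketch does not close it. Write $D:=\phi(\ori O_1)-\phi(\ori O_2)=P+Q$ with $P\in\Lambda$, $Q\in\Lambda^*$. The signed circuits that Lemma~\ref{lem:decompose} extracts from $P$ are sign-compatible with $P$, not with $\ori O_1$: on $\supp(D)$ the sign of $\ori O_1$ matches $D=P+Q$, not $P$, and off $\supp(D)$ the sign of $\ori O_1$ is completely unconstrained by $D$. Hence a signed circuit coming from $P$ may fail to be compatible with $\ori O_1$ at all, and even when it is reversible, its support need not lie inside $\supp(D)$ (cancellation between $P$ and $Q$ can make $\supp(P)\not\subseteq\supp(D)$), so your induction variable $|\supp D|$ need not decrease. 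In short, neither the source of candidate reversals nor the monovariant in your induction is justified. You correctly flag that ``some care is needed,'' but this is an understatement: disentangling the circuit and cocircuit parts so that each step is a legal reversal on the current orientation is precisely the substance of the theorem, and it does not fall out of Lemma~\ref{lem:decompose} alone.
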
 

\begin{ex}\label{ex:canonical_action_without_representatives}
    Take the graphic matroid on Figure \ref{fig:Changxin's_graph}, and take the orientation $\ori O$ with $\ori O\oriat{f_1} = -$, $\ori O\oriat{f_2} = -$, $\ori O\oriat{f_3} = +$, and $\ori O\oriat{f_4} = +$ (in short, $\ori O =(-,-,+,+)$). Since $-\vec f_1 \sim \ori O$, we have $[\vec f_1]\cdot [\ori O] = [(+,-,+,+)]$.

    As a more interesting example, consider also $[\vec {f_3}]\cdot [\ori O]$. Since $\vec {f_3}\sim \ori O$, we need to reverse a signed circuit or cocircuit containing $f_3$, and then reverse $f_3$ again. Notice that $-\vec {f_1}+\vec {f_3} + \vec {f_4}$ is a signed cocircuit containing $f_3$ that is compatible with $\ori O$. Hence $[(+,-,+,-)]$ is the circuit-cocircuit equivalence class of $[\vec {f_3}]\cdot [\ori O]$.
\end{ex}

As such, any bijection between $\ccequiv(M)$ and $\mathbf{B}(M)$ yields a simply transitive group action of $S(M)$ on $\mathbf{B}(M)$ via composing with the canonical action.

\subsection{Fourientations} \label{sec:four}
We next introduce a bijection between $\ccequiv(M)$ and $\mathbf{B}(M)$.
It is convenient to introduce our theory in terms of a generalization of orientations called \emph{fourientations}. These objects were systematically studied by Backman and Hopkins \cite{BH}, but we will only make use of the basic notions.

\begin{defn}
    Given a set $E$, a \emph{fourientation} $\fouri{F}$ is a map from $E$ to the set $\{\emptyset,-,+,\pm\}$. Here, we use $-$ for the set $\{-\}$, we use $+$ for the set $\{+\}$, and we use $\pm$ for the set $\{-,+\}$. We denote the set of fourientations on the ground set of a matroid $M$ by $\Fourientations{M}$. 
\end{defn}

As with orientations, for $x \in E$, we write $\fouri{F} \oriat{x}$ for the output of the map $\fouri{F}$ at $x$. Intuitively, each element of the ground set can be oriented in either direction, bi-oriented, or unoriented. 

Given a fourientation $\fouri F$, we define two fourientations $-\fouri F$ and $\fouri F^c$ by: 
\[ -\fouri{F}\oriat{e} = \begin{cases}- & \text{ if } \fouri F\oriat{e} = +\\ + & \text{ if } \fouri F\oriat{e} = -\\ \pm & \text{ if } \fouri F\oriat{e} = \pm\\ \emptyset & \text{ if } \fouri F\oriat{e} = \emptyset\end{cases} \hspace{1 cm} \text{ and }\hspace{1 cm}
 \fouri{F}^c\oriat{e} = \begin{cases}- & \text{ if } \fouri F\oriat{e} = +\\ + & \text{ if } \fouri F\oriat{e} = -\\ \emptyset & \text{ if } \fouri F\oriat{e} = \pm\\ \pm & \text{ if } \fouri F\oriat{e} = \emptyset\end{cases}\]


Below, we give a natural generalization of Definition~\ref{def:oricompat} to the fourientation context. 
\begin{defn}\label{def:fouricompat}
 Let $\fouri F$ be a fourientation, $\vec P$ be a $1$-chain, and $x \in E$. We write $\vec P \oriat{x} \compat \fouri F\oriat{x}$ if any of the following conditions hold:
 \renewcommand{\theenumi}{\roman{enumi}}
 \begin{enumerate}
    \item $\vec P\oriat{x} = 0$,
    \item $\fouri F \oriat{x} = \pm$,
    \item $\vec P\oriat{x} <0$ and $\fouri F \oriat{x} = -$, or 
    \item $\vec P\oriat{x} >0$ and $\fouri F \oriat{x} = +$.
\end{enumerate}

We say that $\vec P$ is \emph{compatible} with $\fouri F$, denoted $\vec P \compat \fouri F$, if $\vec P\oriat{x} \compat \fouri F\oriat{x}$ for every $x \in E$. 
\end{defn}

An equivalent definition is that $\vec P$ is compatible with $\fouri F$  if for every $x \in P$, the sign of $\vec P\oriat{x}$ is contained in $\fouri F\oriat{x}$. Note that Definition~\ref{def:fouricompat} is equivalent to Definition~\ref{def:oricompat} when restricted to orientations, so the common notation for compatibility does not introduce any ambiguity. 


Like compatibility, reversing subsets of $E$ also makes sense in the context of fourientations. In particular, we have the following generalization of Definition~\ref{def:reverse}. 
\begin{defn}\label{def:Freverse} Let $\fouri F\in \Fourientations{M}$ and $P$ be a subset of $E$. Define $\reverse{P}{\fouri F}$ to be the fourientation such that, for $x \in E$, we have
\[ \reverse{P} {\ori{F}}\oriat{x}  = \begin{cases}- \ori{F}\oriat{x}  & \text{if $x \in P$ and $\ori{F}\oriat{x}\in\{-,+\}$,}\\  \ori{F}\oriat{x}  & \text{otherwise.}\end{cases}\]
If $\vec P$ is a 1-chain and $\vec P \compat \fouri F$, then we say that $\reverse {P}{\fouri F}$ is a \emph{reversal} of $\fouri F$ by $\vec P$.
\end{defn}


The following lemma is called the {\em 3-painting axiom}, which generalizes Lemma~\ref{lem:circorcocirc}. It will be crucial for many of our future arguments. 
\begin{lemma}\cite[Theorem 3.4.4]{Oriented}\label{lem:3painting}
    Let $\fouri F$ be a fourientation and $x \in E$ such that $\fouri F\oriat{x} \in \{-,+\}$. Then, exactly one of the following conditions holds:
    
    \begin{enumerate}
        \item There exists a signed circuit $\vec C$ that is compatible with $\fouri F$ such that $x\in C$.
        \item There exists a signed cocircuit $\vec {C^*}$ that is compatible with $(-\fouri F)^c$ such that $x\in C^*$.
    \end{enumerate}
\end{lemma}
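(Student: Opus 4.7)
Since $M$ is realized by the totally unimodular matrix $A$, the plan is to translate each alternative into a linear feasibility statement, prove mutual exclusion via the orthogonality of $\ker(A)$ and $\row(A)$, and deduce exhaustiveness via Farkas' lemma.

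First, I would reformulate the two conditions using Lemma~\ref{lem:decompose}. Partition $E$ into $E_+, E_-, E_\pm, E_\emptyset$ according to the value of $\fouri F$. Condition~(1) is equivalent to the existence of an integral $1$-chain $\vec u \in \ker(A)$ with $\vec u\oriat{x} \ne 0$ and $\vec u \compat \fouri F$: indeed, Lemma~\ref{lem:decompose} decomposes such a $\vec u$ into a sum of signed circuits each compatible with $\vec u$ (and hence with $\fouri F$), with no cancellation at any element of $\supp(\vec u)$, so one of these signed circuits must contain $x$. Symmetrically, Condition~(2) is equivalent to the existence of an integral $\vec v \in \row(A)$ with $\vec v\oriat{x} \ne 0$ and $\vec v \compat (-\fouri F)^c$---that is, $\vec v\oriat{e}$ has the sign of $\fouri F\oriat{e}$ on $E_+ \cup E_-$, vanishes on $E_\pm$, and is free on $E_\emptyset$.

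Mutual exclusion would follow from orthogonality: if both $\vec u$ and $\vec v$ as above existed, then $\langle \vec u, \vec v \rangle = 0$, yet term by term, on $E_+$ both coordinates are $\ge 0$, on $E_-$ both are $\le 0$, on $E_\pm$ the $\vec v$-coordinate vanishes, and on $E_\emptyset$ the $\vec u$-coordinate vanishes. Hence every product $\vec u\oriat{e}\cdot \vec v\oriat{e}$ is nonnegative, and at $e = x$ both factors are nonzero with matching sign, making the sum strictly positive---a contradiction.

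For exhaustiveness, assume Condition~(1) fails. Then the linear system ``$A\vec u = 0$, with the sign conditions on $E_+$, $E_-$, $E_\emptyset$, and $\vec u\oriat{x} \ge 1$'' is infeasible. Farkas' lemma produces $w \in \R^r$ such that $v := w^T A$ is nonnegative on $E_+$, nonpositive on $E_-$, zero on $E_\pm$, unrestricted on $E_\emptyset$, and strictly positive at $x$. Clearing denominators and invoking the cocircuit half of Lemma~\ref{lem:decompose} decomposes $v$ into signed cocircuits each compatible with $(-\fouri F)^c$, one of which must contain $x$---yielding Condition~(2). The main obstacle, I expect, is the Farkas bookkeeping in this last step: one has to verify that the equality constraints on $E_\emptyset$ dualize to unrestricted multipliers (matching the $\pm$ paint of $(-\fouri F)^c$) while the absence of constraints on $E_\pm$ dualizes to forced-zero multipliers (matching the $\emptyset$ paint of $(-\fouri F)^c$). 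The sign flip $\fouri F \leadsto (-\fouri F)^c$ then emerges naturally from the LP duality, but correctly identifying which primal paint becomes which dual paint requires careful accounting.
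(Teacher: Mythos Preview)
The paper does not give its own proof of this lemma: it is quoted directly as \cite[Theorem~3.4.4]{Oriented} and used as a black box. So there is no ``paper's proof'' to compare against; your proposal supplies an argument where the paper simply cites the literature.

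Your approach---reformulate each alternative via Lemma~\ref{lem:decompose} as a sign-constrained feasibility problem in $\ker(A)$ or $\row(A)$, prove mutual exclusion by orthogonality, and prove exhaustiveness by Farkas---is the standard route to the 3-painting lemma in the \emph{realizable} case, and it is essentially correct. Two small gaps are worth flagging. First, when you write ``assume Condition~(1) fails, then the linear system \ldots\ is infeasible,'' you are passing from the nonexistence of an \emph{integral} $\vec u$ to the infeasibility of a \emph{real} LP; this is fine because the constraint set is a rational polyhedron (so a real feasible point yields a rational one, which can be scaled to an integer one), but it should be said. Second, the same issue arises on the dual side: the Farkas certificate $v = w^T A$ must be taken rational before you can clear denominators and feed it to Lemma~\ref{lem:decompose}, which is stated only for integral 1-chains. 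Again this is automatic since the data are rational, but it deserves a sentence.

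The Farkas bookkeeping you flag as the main obstacle does work out exactly as you predict: equality constraints (on $E_\emptyset$) dualize to free multipliers, free variables (on $E_\pm$) dualize to equality constraints, and the one-sided constraints on $E_+,E_-$ dualize to one-sided constraints of the same sign on the certificate. This is precisely the $\fouri F \leadsto (-\fouri F)^c$ correspondence, and once you set it up carefully (say, after reorienting so that $E_- = \emptyset$) it is routine.
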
 

\subsection{Triangulating Signatures and the Backman--Baker--Yuen Bijection} \label{sec:BBY_bij}

In \cite{BBY}, Backman, Baker, and the fourth author defined a family of explicit bijections between $\ccequiv(M)$ and $\mathbf{B}(M)$. These maps were generalized in \cite{BSY} and \cite{Ding2} to the context we will use in this paper. Below, we give their constructions in the language of fourientations. 

The key ingredient to define the bijection is a circuit-cocircuit signature $(\sigma, \sigma^*)$, which we define below. 

\begin{defn}\label{def:signatures} 
A \emph{circuit signature} $\sigma \subset \SignedCircuits M$ is a collection of signed circuits of $M$ such that for each circuit $C \in \Circuits{M}$, exactly one of the two signed circuits supported on $C$ is contained in $\sigma$. We write $\sigma(C)$ for the signed circuit supported on $C$ that is contained in $\sigma$. 

A \emph{cocircuit signature} $\sigma^* \subset \SignedCoCircuits M$ is defined analogously. For a cocircuit $C^*$, we write $\sigma^*(C^*)$ for the signed cocircuit supported on $C^*$ that is contained in $\sigma^*$. 

By a \emph{circuit-cocircuit signature} we  mean a pair consisting of a circuit signature and a cocircuit signature.
\end{defn}

\begin{ex}\label{ex:signature}
For the graph of Figure \ref{fig:Changxin's_graph}, the signed circuits $\vec {f_1}-\vec {f_2} +\vec {f_3}$, $\vec {f_1} - \vec {f_2} + \vec {f_4}$ and $-\vec {f_3} + \vec {f_4}$ form a circuit signature. The signed cocircuits $-\vec {f_1} + \vec {f_3} + \vec {f_4}$, $-\vec {f_1} -\vec {f_2}$, and $\vec {f_2} + \vec {f_3} + \vec {f_4}$ form a cocircuit signature. See also Figure \ref{fig:signature}.
\end{ex}

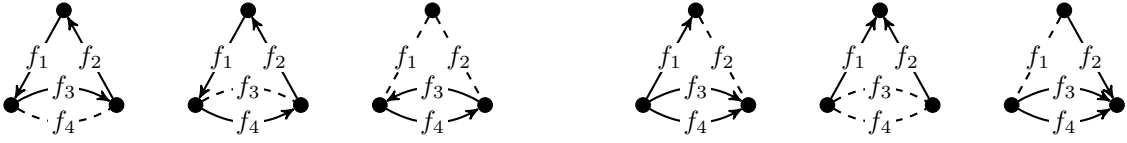
\begin{figure}
	\begin{center}
		\begin{tikzpicture}[scale=0.7]
			\tikzstyle{o}=[circle,fill,scale=.5,draw]
			\begin{scope}[shift={(-9.5,0)}]
				\node [o] (1) at (0,0) {};
				\node [o] (2) at (1,1.8) {};
				\node [o] (3) at (2,0) {};
				\draw [thick,->,>=stealth',] (2) to node[fill = white,inner sep=1pt]{\footnotesize $f_1$} (1);
                \draw [thick,<-,>=stealth',] (2) to node[fill = white,inner sep=1pt]{\footnotesize $f_2$} (3);
				\draw [thick,->,>=stealth',bend left=30] (1) to node[fill = white,inner sep=1pt]{\footnotesize $f_3$} (3);
				\draw [thick,-,>=stealth',dashed,bend right=30] (1) to node[fill = white,inner sep=1pt]{\footnotesize $f_4$} (3);
			\end{scope}
			
			\begin{scope}[shift={(-6,0)}]
				\node [o] (1) at (0,0) {};
				\node [o] (2) at (1,1.8) {};
				\node [o] (3) at (2,0) {};
				\draw [thick,->,>=stealth',] (2) to node[fill = white,inner sep=1pt]{\footnotesize $f_1$} (1);
                \draw [thick,<-,>=stealth',] (2) to node[fill = white,inner sep=1pt]{\footnotesize $f_2$} (3);
				\draw [thick,-,>=stealth',dashed,bend left=30] (1) to node[fill = white,inner sep=1pt]{\footnotesize $f_3$} (3);
				\draw [thick,->,>=stealth',bend right=30] (1) to node[fill = white,inner sep=1pt]{\footnotesize $f_4$} (3);
			\end{scope}
			
			\begin{scope}[shift={(-2.5,0)}]
				\node [o] (1) at (0,0) {};
				\node [o] (2) at (1,1.8) {};
				\node [o] (3) at (2,0) {};
				\draw [thick,-,>=stealth',dashed] (2) to node[fill = white,inner sep=1pt]{\footnotesize $f_1$} (1);
                \draw [thick,-,>=stealth',dashed] (2) to node[fill = white,inner sep=1pt]{\footnotesize $f_2$} (3);
				\draw [thick,<-,>=stealth',bend left=30] (1) to node[fill = white,inner sep=1pt]{\footnotesize $f_3$} (3);
				\draw [thick,->,>=stealth',bend right=30] (1) to node[fill = white,inner sep=1pt]{\footnotesize $f_4$} (3);
			\end{scope}

			\begin{scope}[shift={(2.5,0)}]
				\node [o] (1) at (0,0) {};
				\node [o] (2) at (1,1.8) {};
				\node [o] (3) at (2,0) {};
				\draw [thick,<-,>=stealth'] (2) to node[fill = white,inner sep=1pt]{\footnotesize $f_1$} (1);
                \draw [thick,-,>=stealth',dashed] (2) to node[fill = white,inner sep=1pt]{\footnotesize $f_2$} (3);
				\draw [thick,->,>=stealth',bend left=30] (1) to node[fill = white,inner sep=1pt]{\footnotesize $f_3$} (3);
				\draw [thick,->,>=stealth',bend right=30] (1) to node[fill = white,inner sep=1pt]{\footnotesize $f_4$}(3);
			\end{scope}
			
			\begin{scope}[shift={(6,0)}]
				\node [o] (1) at (0,0) {};
				\node [o] (2) at (1,1.8) {};
				\node [o] (3) at (2,0) {};
				\draw [thick,<-,>=stealth'] (2) to node[fill = white,inner sep=1pt]{\footnotesize $f_1$} (1);
                \draw [thick,<-,>=stealth'] (2) to node[fill = white,inner sep=1pt]{\footnotesize $f_2$} (3);
				\draw [thick,-,>=stealth',bend left=30,dashed] (1) to node[fill = white,inner sep=1pt]{\footnotesize $f_3$} (3);
				\draw [thick,-,>=stealth',bend right=30,dashed] (1) to node[fill = white,inner sep=1pt]{\footnotesize $f_4$} (3);
			\end{scope}
			
			\begin{scope}[shift={(9.5,0)}]
				\node [o] (1) at (0,0) {};
				\node [o] (2) at (1,1.8) {};
				\node [o] (3) at (2,0) {};
				\draw [thick,-,>=stealth',dashed] (2) to node[fill = white,inner sep=1pt]{\footnotesize $f_1$} (1);
                \draw [thick,->,>=stealth'] (2) to node[fill = white,inner sep=1pt]{\footnotesize $f_2$} (3);
				\draw [thick,->,>=stealth',bend left=30] (1) to node[fill = white,inner sep=1pt]{\footnotesize $f_3$} (3);
				\draw [thick,->,>=stealth',bend right=30] (1) to node[fill = white,inner sep=1pt]{\footnotesize $f_4$} (3);
			\end{scope}
		\end{tikzpicture}
	\end{center}
	\caption{The circuit signature of Example \ref{ex:signature} (left panels) and the cocircuit signature of Example \ref{ex:signature} (right panels). 
	}
	\label{fig:signature}
\end{figure}


Fix a circuit-cocircuit signature $(\sigma,\sigma^*)$ and a basis $B$. For each $e\not\in B$, let $C_e$ be the unique circuit contained in $B\cup\{e\}$ (known as the {\em fundamental circuit} of $e$ with respect to $B$). For each $e \in B$, let $C^*_{e}$ be the unique cocircuit contained in $(E \setminus B)\cup\{e\}$ (known as the {\em fundamental cocircuit} of $e$ with respect to $B$). 

\begin{defn}\label{def:basisfouri} 
We denote by $\fourmap{B}{\sigma}$ the fourientation where all $e \in B$ are bi-oriented and all the $e \in E \setminus B$ are oriented according to $\sigma(C_e)$.
Similarly, we denote by $\fourmap{B}{\sigma^*}$ the fourientation where all $e \in E \setminus B$ are bi-oriented and all $e \in B$ are oriented according to $\sigma^*(C^*_e)$.
\end{defn}
\begin{ex}\label{ex:fourientation}
For the graph of Figure \ref{fig:Changxin's_graph}, take the spanning tree $T$ consisting of edges $f_1$ and $f_3$. Take the circuit-cocircuit signature $(\sigma,\sigma^*)$ of Example \ref{ex:signature}. Then 
$\fourmap{T}{\sigma}=(\pm,-,\pm,+)$ and 
$\fourmap{T}{\sigma^*}=(-,\pm,+,\pm)$.
\end{ex}

For $\fouri{F}_1, \fouri{F}_2\in \Fourientations{M}$, we write $\fouri{F}_1 \cup \fouri{F}_2$ and $\fouri{F}_1 \cap \fouri{F}_2$ for the fourientations obtained by taking pointwise union or intersection.


\begin{defn}\cite{Ding2}\label{def:triangulating} A circuit $\sigma$ (resp. cocircuit signature $\sigma^*$) is called \emph{triangulating} if for any distinct $B_1,B_2 \in \mathbf B(M)$, the fourientation $\fourmap{B_1}{\sigma}\cap-\fourmap{B_2}{\sigma}$ is not compatible with any $\vec C \in \SignedCircuits M$ (resp. $\fourmap{B_1}{\sigma^*}\cap-\fourmap{B_2}{\sigma^*}$ is not compatible with any $\vec C^* \in \SignedCoCircuits M$). 


A circuit-cocircuit signature $(\sigma, \sigma^*)$ is \emph{triangulating} if $\sigma$ and $\sigma^*$ are both triangulating. 
\end{defn} 

\begin{remark}
The definition for the triangulating signatures is different from the one in \cite[Definition 1.14]{Ding2}. By \cite[Theorem 1.16]{Ding2}, the two definitions are equivalent. For this paper, the current version will make our theory more concise. See~\cite[Lemma 4.16]{Ding2} for a proof that triangulating signatures exist for any regular matroid. 
\end{remark}

\begin{remark}\label{rem:tri_vs_acyc}
The paper~\cite{BBY} considers a subclass of triangulating circuit/cocircuit signatures called \emph{acyclic} circuit/ cocircuit signatures. See Section~\ref{sec:acylic} for more discussion on various conditions for signatures.

\end{remark}



\begin{remark} \label{rem:planar}
Let us mention an important example for a triangulating circuit signature, which is elaborated on in Section~\ref{sec:planar}. For a plane graph, simple cycles (as the  circuits of the matroid) oriented counterclockwise form a triangulating circuit signature (\cite{YCH}, cf. Remark \ref{rem:tri_vs_acyc}). 
Also, for a connected plane graph, minimal cuts (as the cocircuits of the matroid) ``oriented away'' from a fixed vertex $v$ form a triangulating cocircuit signature. In fact, such a cocircuit signature is the circuit signature of the dual graph as described above(with the dual face of $v$ embedded as the outermost face). 
Notice that the circuit and cocircuit signatures given in Example \ref{ex:signature} fall into the above cases. Hence, Example \ref{ex:signature} gives a triangulating circuit-cocircuit signature. 
\end{remark}


\begin{defn} Let $M$ be a regular matroid and $(\sigma,\sigma^*)$ be a circuit-cocircuit signature. An orientation $\ori O$ is \emph{$\sigma$-compatible} (resp. \emph{$\sigma^*$-compatible}) if every signed circuit (resp. cocircuit) compatible with $\ori O$ is in $\sigma$ (resp. $\sigma^*$). An orientation is \emph{$(\sigma,\sigma^*)$-compatible} if it is both $\sigma$-compatible and $\sigma^*$-compatible.  
\end{defn}

\begin{prop}\cite[Proposition~1.19(1)]{Ding2}\label{prop:uniquemin}
Fix a regular matroid $M$ and let $(\sigma,\sigma^*)$ be a triangulating circuit-cocircuit signature. Then each equivalence class in $\ccequiv(M)$ contains a unique $(\sigma,\sigma^*)$-compatible orientation. 
\end{prop}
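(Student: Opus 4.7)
The plan is to prove the proposition by constructing, for each basis $B$, a specific $(\sigma,\sigma^*)$-compatible orientation $\ori O_B$, and showing that $B\mapsto[\ori O_B]$ is a bijection between $\mathbf B(M)$ and $\ccequiv(M)$. Define $\ori O_B:=\fourmap{B}{\sigma}\cap\fourmap{B}{\sigma^*}$; this is a full orientation since $\fourmap{B}{\sigma}$ is $\pm$ exactly on $B$ and $\fourmap{B}{\sigma^*}$ is $\pm$ exactly on $E\setminus B$, so at every element $e$ exactly one factor prescribes a single direction: $\ori O_B\oriat{e}=\sigma^*(C^*_e)\oriat{e}$ for $e\in B$ and $\ori O_B\oriat{e}=\sigma(C_e)\oriat{e}$ for $e\notin B$.

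The first key step is to show each $\ori O_B$ is $(\sigma,\sigma^*)$-compatible. Suppose a signed circuit $\vec C$ satisfies $\vec C\compat\ori O_B$ but $\vec C\notin\sigma$ (the cocircuit case is symmetric). Since $\ori O_B\subseteq\fourmap{B}{\sigma}$, we have $\vec C\compat\fourmap{B}{\sigma}$. I would then exhibit a basis $B'\ne B$ with $\vec C\compat-\fourmap{B'}{\sigma}$ by performing a basis exchange along $C$: pick $e\in C\setminus B$ and apply Lemma~\ref{lem:3painting} to $\sigma(C)=-\vec C$ to select a compatible swap partner in $B\cap C$. Then $\vec C\compat\fourmap{B}{\sigma}\cap-\fourmap{B'}{\sigma}$, contradicting the triangulating property of $\sigma$.

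Next I would show $B\mapsto[\ori O_B]$ is injective; combined with Gioan's identity $|\mathbf B(M)|=|\ccequiv(M)|$, this implies $\{[\ori O_B]\}_{B\in\mathbf B(M)}$ enumerates $\ccequiv(M)$, establishing existence. For injectivity, suppose $\ori O_{B_1}\sim\ori O_{B_2}$ with $B_1\ne B_2$, and let $\vec P$ be the simple $1$-chain compatible with $\ori O_{B_1}$ supported on the disagreement set. Each circuit-cocircuit reversal modifies an orientation by $-2$ times a signed circuit or cocircuit, so $\vec P\in\Lambda(M)+\Lambda^*(M)$. Decomposing $\vec P$ into components in $\ker(A)$ and $\row(A)$ and applying Lemma~\ref{lem:decompose} extracts a signed circuit (or cocircuit) supported within the disagreement set. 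A sign analysis distinguishing elements of $B_1\triangle B_2$ from those of $E\setminus(B_1\cup B_2)$ shows this object is compatible with $\fourmap{B_1}{\sigma}\cap-\fourmap{B_2}{\sigma}$ (or its cocircuit analogue), again contradicting triangulating.

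Finally, for uniqueness within each class, I would show every $(\sigma,\sigma^*)$-compatible orientation $\ori O$ in $[\ori O_B]$ equals $\ori O_B$. A disagreement at some $e\notin B$ gives $\ori O\oriat{e}=-\sigma(C_e)\oriat{e}$, and a 3-painting-style argument extracts a signed circuit compatible with $\ori O$ that is not in $\sigma$, contradicting $\sigma$-compatibility; the case $e\in B$ is handled dually using $\sigma^*$. The main obstacle throughout is bridging between the basis-fourientation language in which triangulating is phrased and the signed-circuit/cocircuit language of $(\sigma,\sigma^*)$-compatibility; the 3-painting axiom (Lemma~\ref{lem:3painting}) is the key technical tool for this translation, and the sign-tracking required to produce the requisite basis $B'$ (or signed circuit within the disagreement set) is the most delicate part of the argument.
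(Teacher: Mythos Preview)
The paper does not prove this proposition; it is cited from \cite{Ding2}. Your overall strategy --- define $\ori O_B=\fourmap{B}{\sigma}\cap\fourmap{B}{\sigma^*}$, show it lands in $\ccmin(M)$, show $B\mapsto[\ori O_B]$ is injective and hence bijective by Gioan's count, and then argue uniqueness --- is essentially the route taken in \cite{Ding2}, and your first two steps are sketched along the right lines (the construction of $B'$ in Step~1 is vague but can be made to work via a basis exchange argument).

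There is, however, a genuine gap in your uniqueness argument (Step~3). You propose: if a $(\sigma,\sigma^*)$-compatible $\ori O\in[\ori O_B]$ disagrees with $\ori O_B$ at some $e\notin B$, then $\ori O\oriat{e}=-\sigma(C_e)\oriat{e}$, and a 3-painting argument extracts a signed circuit compatible with $\ori O$ that is not in $\sigma$. But this last implication fails: the hypothesis $[\ori O]=[\ori O_B]$ is never used, and indeed for a different basis $B'$ the orientation $\ori O_{B'}$ is $(\sigma,\sigma^*)$-compatible yet may well satisfy $\ori O_{B'}\oriat{e}=-\sigma(C_e)\oriat{e}$ (the fundamental circuit of $e$ with respect to $B'$ need not agree with $C_e$ at $e$). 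So no contradiction can be derived from this information alone, and 3-painting applied to any fourientation built from $B$ will at best produce a circuit whose restriction outside $B$ is compatible with $\ori O$, which says nothing about membership in $\sigma$.

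The correct uniqueness argument does not reference $B$ at all. Given two $(\sigma,\sigma^*)$-compatible orientations $\ori O_1,\ori O_2$ in the same class, use Lemma~\ref{lem:circorcocirc} to split $E$ into its totally cyclic and acyclic parts (this partition is invariant under reversals). On the cyclic part the difference is a simple $1$-chain in $\ker(A)$, and Lemma~\ref{lem:decompose} decomposes it into disjoint signed circuits, each compatible with $\ori O_1$ and whose negation is compatible with $\ori O_2$. By $(\sigma,\sigma^*)$-compatibility of both orientations, each such circuit would lie in $\sigma$ and have its negation in $\sigma$, which is impossible; hence the cyclic parts agree. The acyclic parts agree by the dual argument. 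This is the content of (the proof of) Lemma~\ref{lem:disjoint}, stripped of its dependence on the existence of $\ori O^\circ$.
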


Proposition~\ref{prop:uniquemin} implies that the following notion is well-defined. 
\begin{defn}\label{def:minimize}Given an orientation $\ori O$, let $\ori {O}^\circ$ be the $(\sigma,\sigma^*)$-compatible orientation in the same reversal class as $ \ori O$. Likewise, given $[\ori O] \in \ccequiv(M)$, let $[\ori O]^\circ = \ori O^\circ$. Furthermore, let $\ccmin(M)$ be the set of all $(\sigma,\sigma^*)$-compatible orientations.
\end{defn}


Note that $\ori {O}^\circ$ and $\ccmin(M)$ both depend on the choice of circuit-cocircuit signature. We omit a reference to this signature in the notation for readability. 
By Proposition~\ref{prop:uniquemin}, we can also define the canonical action on the set $\ccmin(M)$. 

\begin{defn}\label{def:canonical_action_for_representatives}
Given a triangulating circuit-cocircuit signature, for $s\in S(M)$ and $\ori O \in \ccmin(M)$, we define 
\[s \cdot \ori O := (s \cdot [\ori O])^\circ.\]    
\end{defn}

\begin{ex}\label{ex:canonical_action_with_representatives}
    Let us return to Example \ref{ex:canonical_action_without_representatives}. There, we noted that $[\vec {f_1}]\cdot [(-,-,+,+)]=[(+,-,+,+)]$ and $[\vec {f_3}]\cdot [(-,-,+,+)]=[(+,-,+,-)]$. Note that with respect to the circuit-cocircuit signature $(\sigma,\sigma^*)$ from Example \ref{ex:signature}, the orientations $(-,-,+,+)$ and $(+,-,+,+)$ are $(\sigma,\sigma^*)$-compatible, while $(+,-,+,-)$ is not, since $\vec {f_3}- \vec {f_4} \sim (+,-,+,-)$ but $\vec {f_3}- \vec {f_4}$ is a signed circuit that is not in $\sigma$. Nevertheless, one can show that after reversing this circuit, the orientation is $(\sigma,\sigma^*)$-compatible. In particular, we have $(+,-,+,-)^\circ = (+,-,-,+)$, and it follows that $[\vec {f_3}]\cdot (-,-,+,+) = (+,-,-,+)$.
\end{ex}

Now we are in the position to introduce the BBY bijection.  

\begin{defn}[BBY bijection]
Fix a regular matroid $M$ and a pair $(\sigma,\sigma^*)$ of triangulating signatures. The map $\BBY_{(M,\sigma,\sigma^*)}:
\mathbf{B}(M)\rightarrow\Orientations{M}$ is given by $B\mapsto \fourmap B \sigma \cap \fourmap {B}{\sigma^*}$.
\end{defn}



\begin{thm}\cite{BSY, Ding2}\label{thm:BBYbij}
Fix a regular matroid $M$ and a pair $(\sigma,\sigma^*)$ of triangulating signatures. The BBY map $\BBY_{(M,\sigma,\sigma^*)}$ is a bijection between $\mathbf{B}(M)$ and $\ccmin(M)$. In particular, this map induces a bijection between $\mathbf{B}(M)$ and $\ccequiv(M)$.
\end{thm}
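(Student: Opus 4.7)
The plan is to split Theorem~\ref{thm:BBYbij} into two claims: (a) well-definedness, i.e.\ $\BBY_{(M,\sigma,\sigma^*)}(B) \in \ccmin(M)$ for every basis $B$; and (b) bijectivity of the resulting map $\mathbf{B}(M) \to \ccmin(M)$. The induced bijection with $\ccequiv(M)$ then follows formally by post-composing with the canonical bijection $\ccmin(M) \to \ccequiv(M)$ provided by Proposition~\ref{prop:uniquemin}.

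First I would check that $\BBY(B) = \fourmap{B}{\sigma} \cap \fourmap{B^*}{\sigma^*}$ is an honest orientation of $M$. By Definition~\ref{def:basisfouri} the fourientation $\fourmap{B}{\sigma}$ is bi-oriented exactly on $B$ and singly oriented on $E \setminus B$, while $\fourmap{B^*}{\sigma^*}$ is bi-oriented exactly on $E \setminus B$ and singly oriented on $B$, so at every $e \in E$ exactly one factor contributes a single sign, yielding a genuine orientation. For $(\sigma,\sigma^*)$-compatibility I would treat only the $\sigma$ side and leave the $\sigma^*$ side to duality. Suppose for contradiction some $\vec C \compat \BBY(B)$ has $-\vec C \in \sigma$. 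Since $\BBY(B) \subseteq \fourmap{B}{\sigma}$ we already have $\vec C \compat \fourmap{B}{\sigma}$. Now pick any $e \in C$ and extend the independent set $C \setminus e$ to a basis $B'$ of $M$; because $C$ is a circuit, $e \not\in B'$, so $C$ is the fundamental circuit $C_e^{B'}$ and $\sigma(C_e^{B'}) = -\vec C$, whence $\vec C \compat -\fourmap{B'}{\sigma}$. Thus $\vec C$ is compatible with the pointwise intersection $\fourmap{B}{\sigma} \cap -\fourmap{B'}{\sigma}$. If $B = B'$ then this intersection is empty off $B$, forcing $C \subseteq B$ and contradicting that $C$ is a circuit; if $B \neq B'$ this directly contradicts the triangulating condition of Definition~\ref{def:triangulating}.

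For bijectivity I would appeal to the equalities $|\mathbf{B}(M)| = |\ccequiv(M)| = |\ccmin(M)|$ (combining Gioan's theorem with Proposition~\ref{prop:uniquemin}), so that it suffices to establish injectivity. Suppose $\BBY(B_1) = \BBY(B_2) = \ori O$ with $B_1 \neq B_2$. The goal is again to produce a signed circuit or cocircuit forbidden by the triangulating condition. I would pick $e \in B_1 \setminus B_2$ and consider the fundamental cocircuit $C^{*B_1}_e$ of $e$ with respect to $B_1$: the signed cocircuit $\sigma^*(C^{*B_1}_e)$ is automatically compatible with $\fourmap{B_1}{\sigma^*}$, so the task reduces to verifying its compatibility with $-\fourmap{B_2}{\sigma^*}$ on $C^{*B_1}_e \cap (B_2 \setminus B_1)$. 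The identities $\sigma^*(C^{*B_1}_f)_f = \sigma(C^{B_2}_f)_f$ for $f \in B_1 \setminus B_2$, forced pointwise by $\BBY(B_1) = \BBY(B_2)$, combined with the orthogonality of Lemma~\ref{lem:ccint} applied to $C^{*B_1}_e$ and the fundamental circuits of elements of $B_2 \setminus B_1$, should yield the required sign matchings. This injectivity step is the main obstacle: orchestrating the orthogonality relations so that \emph{every} element of $C^{*B_1}_e \cap (B_2 \setminus B_1)$ simultaneously satisfies the required sign constraint is delicate. If the one-shot argument fails, I would fall back to induction on $|B_1 \triangle B_2|$, reducing to the single-exchange case $B_2 = (B_1 \setminus f) \cup e$, where the relevant cocircuit intersection involves essentially two elements and the orthogonality identity determines the answer directly.
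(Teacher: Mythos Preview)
The paper does not prove Theorem~\ref{thm:BBYbij}; it is quoted from \cite{BSY,Ding2} as background, so there is no in-paper argument to compare against. Your part~(a) is essentially correct: the observation that $\fourmap{B}{\sigma}$ and $\fourmap{B}{\sigma^*}$ are singly oriented on complementary sets gives an orientation, and the ``extend $C\setminus e$ to a basis $B'$'' trick cleanly forces a violation of Definition~\ref{def:triangulating}.

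Your injectivity argument, however, has a real gap. The fundamental cocircuit $\sigma^*(C^{*B_1}_e)$ is in general \emph{not} compatible with $-\fourmap{B_2}{\sigma^*}$ at the elements of $B_2\setminus B_1$. Already in the single-exchange case $B_2=(B_1\setminus f)\cup e$ one has $C^{*B_1}_f=C^{*B_2}_e$, so at the unique element $e\in B_2\setminus B_1$ the two sides are literally the same signed cocircuit evaluated at $e$, giving the \emph{same} sign rather than opposite signs. (That case is still impossible, but for a different reason: $\sigma(C^{B_1}_e)$ and $\sigma^*(C^{*B_1}_f)$ end up agreeing on both points of $C\cap C^*=\{e,f\}$, contradicting Lemma~\ref{lem:ccint} directly.) Your orthogonality sketch for general $g\in C^{*B_1}_e\cap(B_2\setminus B_1)$ reduces to needing $\sigma(C^{B_1}_g)\oriat{e}=\ori O\oriat{e}$, which is not forced by $\BBY(B_1)=\BBY(B_2)$. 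The proposed induction on $|B_1\triangle B_2|$ is also not a reduction: a counterexample pair with large symmetric difference does not produce an intermediate basis with the same BBY image.

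The missing idea is to use both triangulating hypotheses at once via the 3-painting axiom (Lemma~\ref{lem:3painting}). With $\ori O_1=\ori O_2$, a direct computation (exactly as in Table~\ref{table:F_andF*}) shows that for $\fouri F:=\fourmap{B_1}{\sigma}\cap -\fourmap{B_2}{\sigma}$ one has $(-\fouri F)^c=\fourmap{B_2}{\sigma^*}\cap -\fourmap{B_1}{\sigma^*}$ on all of $E$. Pick any $x\in B_1\triangle B_2$; then $\fouri F\oriat{x}\in\{+,-\}$, and Lemma~\ref{lem:3painting} yields either a signed circuit compatible with $\fouri F$ (contradicting that $\sigma$ is triangulating) or a signed cocircuit compatible with $(-\fouri F)^c=\fourmap{B_2}{\sigma^*}\cap -\fourmap{B_1}{\sigma^*}$ (contradicting that $\sigma^*$ is triangulating). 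This replaces your attempted explicit cocircuit by an existential one supplied by 3-painting, and is the argument used in \cite{Ding2}.
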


\begin{defn}[BBY action]
    The pullback of the canonical action on the circuit-cocircuit minimal orientations to the bases by the bijection $\BBY_{(M,\sigma,\sigma^*)}$ is called the {\em BBY action}. To be precise, the action of $g\in S(M)$ on the basis $\BBY_{(M,\sigma,\sigma^*)}^{-1}(\ori O)\in \mathbf B(M)$ is defined as \[g \cdot \BBY_{(M,\sigma,\sigma^*)}^{-1}(\ori O) := \BBY_{(M,\sigma,\sigma^*)}^{-1}((g \cdot
[\ori O])^\circ).\]
\end{defn}

\begin{thm}\cite{BSY, Ding2}\label{thm:BBY_simply_transitive}
The BBY action is a simply transitive group action of $S(M)$ on $\mathbf B(M)$.
\end{thm}
\begin{proof}
    This follows immediately from Theorems~\ref{thm:simplytransitive} and~\ref{thm:BBYbij}.
\end{proof}

\begin{ex}\label{ex:BBY_action}
Take the graphic matroid $M$ of Figure \ref{fig:Changxin's_graph} with the circuit-cocircuit signature $(\sigma,\sigma^*)$ of Example \ref{ex:signature}. Let $T$ be the spanning tree with edge set $\{f_1,f_3\}$. Then $\BBY_{(M,\sigma,\sigma^*)}(T) = (-,-,+,+)$.

Let us compute the BBY action of $\vec {f_1}$ on $T$.
As we computed in Example \ref{ex:canonical_action_with_representatives}, $[\vec {f_1}]\cdot (-,-,+,+) = (+,-,+,+)$. Hence we need to find the preimage of $(+,-,+,+)$ at the BBY bijection. One can check that for $T'=\{f_2,f_3\}$, we have $\BBY_{M,\sigma,\sigma^*}(T')=(+,-,+,+)$. Hence $[\vec {f_1}]\cdot \BBY_{(M,\sigma,\sigma^*)}(T)=\BBY_{(M,\sigma,\sigma^*)}(T')$. 
\end{ex}

\subsection{Deletion and Contraction of Matroids and Signatures}\hspace{1pt}


\begin{defn}
An element $e\in E$ is a {\em loop} if it is contained in no basis of $M$; $e$ is a {\em coloop} if it is contained in every basis of $M$.

If $e$ is not a coloop, then the {\em deletion} of $e$ from $M$ is the matroid $$M\setminus e:=(E\setminus e,\{B\in\mathbf{B}(M): e\not\in B\}).$$ 

If $e$ is not a loop, then the {\em contraction} of $e$ from $M$ is the matroid $$M/e:=(E\setminus e,\{B\setminus e: B\in\mathbf{B}(M), e\in B\}).$$ 
\end{defn}

For the rest of this paper, we always assume $e$ is not a coloop (respectively, loop) when we are deleting (respectively, contracting) $e$ from $M$. Note that for Theorem~\ref{thm:consistency}, we only delete $e$ if it is in the complement of a basis (meaning it cannot be a coloop), and we only contract $e$ if it is in a basis (meaning it cannot be a loop). 

We can also perform deletion/contraction in a way that respects the oriented matroid structure. 



\begin{defn}\label{def:OM_minors}\cite[Proposition~3.3.1 \&~3.3.2]{Oriented}
    Let $M$ be a regular matroid with a fixed oriented matroid structure and let $e$ be an element. We have the following deletion and contraction operations at an oriented matroid level.
$$\vec{\mathbf C}(M\setminus e)=\{\vec C\setminus e: \vec C\in\vec{\mathbf C}(M), \vec C\oriat{e}=0\},$$
and $\vec{\mathbf C}(M/e)$ is the set of 1-chains with inclusionwise minimal support in
    $$\{\vec C\setminus e: \vec C\in\vec{\mathbf{C}}(M)\}.$$
Dually, 
$$\vec{\mathbf {C^*}}(M/e)=\{\vec C^*\setminus e: \vec C^*\in\vec{\mathbf{C^*}}(M), \vec C^*\oriat{e}=0\},$$
and $\vec{\mathbf{C^*}}(M\setminus e)$ is the set of 1-chains with inclusionwise minimal support in 
    $$\{\vec C^*\setminus e: \vec C^*\in\vec{\mathbf{C^*}}(M)\}.$$
\end{defn}
We will later need the following lemma which describes the relationship between signed cocircuits in $M$ and $M\setminus e$. 
\begin{lemma}\label{lem:breaking cocircuit}
Let $\vec C^*$ be a signed cocircuit of $M$ and $e$ be a non-loop edge. Then $\vec C^*\setminus e$ is either a signed cocircuit of $M\setminus e$ or a disjoint union of signed cocircuits of $M\setminus e$. 
\end{lemma}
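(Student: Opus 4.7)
The plan is to recognize $\vec C^*\setminus e$ as a nonzero simple 1-chain in the row space of a totally unimodular representation of $M\setminus e$, and then invoke Lemma~\ref{lem:decompose}(2) to decompose it into signed cocircuits of $M\setminus e$ with pairwise disjoint supports.

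First I would let $A'$ denote the matrix obtained from $A$ by deleting the column indexed by $e$. Since every square submatrix of $A'$ is already a submatrix of $A$, the matrix $A'$ is totally unimodular; and since $e$ is not a coloop (as required for the deletion $M\setminus e$ to be defined), $A$ and $A'$ have the same row rank, so $A'$ is a totally unimodular representation of $M\setminus e$ and $\row(A')$ coincides with the restriction of $\row(A)$ to the coordinates in $E\setminus e$. Consequently, $\vec C^*\setminus e$ is a simple 1-chain in $\row(A')$, and the only way it could vanish would be $C^*=\{e\}$, which (by minimality of a cocircuit) would force $e$ to be a coloop, contradicting the standing hypothesis.

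Next I would apply Lemma~\ref{lem:decompose}(2) to the nonzero simple 1-chain $\vec C^*\setminus e\in\row(A')$, which yields a decomposition
\[\vec C^*\setminus e=\sum_{i}\vec{D_i^*},\]
in which each $\vec{D_i^*}$ is a signed cocircuit of $M\setminus e$ and the supports $D_i^*$ are pairwise disjoint. If the sum has a single term, then $\vec C^*\setminus e$ is itself a signed cocircuit of $M\setminus e$; otherwise it is a disjoint union of signed cocircuits of $M\setminus e$, as claimed. I do not anticipate any substantial obstacle; the only delicate point is the identification of $\row(A')$ with the restriction of $\row(A)$ to $E\setminus e$, which is precisely where the non-coloop assumption enters.
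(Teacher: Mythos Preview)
Your proposal is correct and follows essentially the same argument as the paper: both observe that $\vec C^*\setminus e$ lies in $\row(A')$, where $A'$ is the totally unimodular matrix obtained by deleting the $e$-column of $A$, and then invoke Lemma~\ref{lem:decompose}(2) to decompose it into signed cocircuits of $M\setminus e$ with disjoint supports. Your version adds the explicit check that $\vec C^*\setminus e\neq 0$ (via the non-coloop standing assumption), which the paper omits; note, though, that the identification $\row(A')=\pi_{E\setminus e}(\row(A))$ holds regardless of whether $e$ is a coloop, so the rank discussion is not strictly needed for that step.
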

\begin{proof} 
Since $M\setminus e$ can be realized by $A$ with the $e$-th column removed, $\vec {C^*}\setminus e$ is in the row space of $A'$. By Lemma~\ref{lem:decompose}, $\vec {C^*}\setminus e$ can be decomposed into sum of signed cocircuits $\vec{C_1^*},\ldots,\vec {C_t^*}\in\SignedCoCircuits{M\setminus e}$ of disjoint support.
\end{proof}

\begin{remark}
    There is also a dual version of Lemma~\ref{lem:breaking cocircuit} which relates signed circuits of $M$ and $M/e$. However, we only ever need the cocircuit version for our arguments. 
\end{remark}




We can also define deletion and contraction for signatures.
\begin{defn}\label{def:signature_in_a_minor} 
Let $M$ be a regular matroid and $e$ an element of $E$.
For a circuit signature $\sigma$, we define $\sigma\setminus e:=\{\vec C\setminus e: \vec C\in\sigma\}\cap\vec{\mathbf C}(M\setminus e)$ and $\sigma/e:=\{\vec C\setminus e: \vec C\in\sigma\}\cap\vec{\mathbf C}(M/e)$.
Define the deletion and contraction of a cocircuit signature similarly. 


\end{defn}

It follows from Definition~\ref{def:OM_minors} that the new collection also contains, for each circuit (respectively, cocircuit) of the deletion or contraction, exactly one signed circuit (respectively, signed cocircuit) supported on it.
In particular, the deletion or contraction of a signature is still a signature of the deletion or contraction of the matroid.



We can also define deletion and contraction for a fourientation. 

\begin{defn}\label{def:fourientation c/d}
Let $M$ be a regular matroid, $\fouri{F}$ be a fourientation of $M$, and $e$ be an element of the ground set. We denote by $\fouri{F}\setminus e$ the fourientation of $M\setminus e$ obtained by restricting $\fouri{F}$ from the ground set of $M$ to the ground set of $M\setminus e$. Define $\fouri{F}/e$ similarly.  
\end{defn}

We have the following relationships between these notions.

\begin{lemma}\label{lem:bases c/d}
Let $M$ be a regular matroid and $(\sigma, \sigma^*)$ be a circuit-cocircuit signature. Fix $B \in \mathbf B(M)$.
\begin{enumerate}
    \item For $e \in B$, we have 
    \[\fouri{F}(B\setminus e,\sigma/e)=\fouri{F}(B,\sigma)/e \hspace{.5 cm} \text{ and } \hspace{.5 cm} \fouri{F}(B\setminus e,\sigma^*/e)=\fouri{F}(B,\sigma^*)/e.\]
    \item For $e \in E \setminus B$, we have
    \[\fouri{F}(B,\sigma\setminus e)=\fouri{F}(B,\sigma)\setminus e\hspace{.5 cm} \text{ and } \hspace{.5 cm} \fouri{F}(B,\sigma^*\setminus e)=\fouri{F}(B,\sigma^*)\setminus e\]
\end{enumerate}
\end{lemma}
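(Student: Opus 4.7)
My plan is to verify each of the four displayed equalities pointwise on the reduced ground set $E \setminus e$. For every $x \in E \setminus e$, I split into two cases depending on whether $x$ belongs to the new basis (i.e., $B \setminus e$ in part (1) or $B$ in part (2)). In the case where $x$ lies in the new basis, both sides evaluate immediately to $\pm$ by Definition~\ref{def:basisfouri}; the substantive work is to handle the case where $x$ lies outside the new basis.

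For concreteness consider the first equality in~(1): $\fouri F(B\setminus e, \sigma/e) = \fouri F(B, \sigma)/e$, with $e \in B$ and $x \in E \setminus B$. The right-hand side at $x$ equals the sign of $\sigma(C_x)\oriat{x}$, where $C_x$ is the fundamental circuit of $x$ with respect to $B$ in $M$, while the left-hand side equals the sign of $(\sigma/e)(C_x^{M/e})\oriat{x}$, where $C_x^{M/e}$ is the fundamental circuit of $x$ with respect to $B \setminus e$ in $M/e$. The plan is to apply Lemma~\ref{lem:decompose} to $\sigma(C_x)\setminus e$, which lies in the kernel of a matrix representing $M/e$, to write it as a disjoint-support sum of signed circuits of $M/e$. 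By the uniqueness of fundamental circuits, the unique component whose support contains $x$ must be supported on $C_x^{M/e}$ (since it is a circuit of $M/e$ contained in $(B\setminus e) \cup \{x\}$ and contains $x$), and the disjoint-support property ensures that its coordinate at $x$ coincides with $\sigma(C_x)\oriat{x}$. It remains to confirm that Definition~\ref{def:signature_in_a_minor} selects precisely this component as $(\sigma/e)(C_x^{M/e})$, which follows from the observation, noted just after that definition, that the restriction operation produces a well-defined signature on the minor.

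The other three equalities follow the same template. For the $\sigma^*$-versions appearing in (1) and (2), the roles of $B$ and $E \setminus B$ swap, and Lemma~\ref{lem:breaking cocircuit} replaces Lemma~\ref{lem:decompose} as the disjoint-support decomposition tool for signed cocircuits. In part~(2) one additionally uses the standard matroid fact that, since $e \in E \setminus B$ is not a coloop, $B$ remains a basis of $M \setminus e$; in part~(1) one uses analogously that $B \setminus e$ is a basis of $M/e$ when $e \in B$ is not a loop.

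The only non-routine step is the sign-tracking at the end of the coordinate-wise comparison: when the restricted $1$-chain $\sigma(C_x) \setminus e$ (or its cocircuit analog) properly decomposes into several signed components of the minor, I must rule out that $\sigma/e$ (respectively, $\sigma \setminus e$, $\sigma^*/e$, or $\sigma^* \setminus e$) selects the negation of the component containing $x$. I expect this to reduce to an almost tautological observation: the relevant component is literally obtained by restricting the signed $1$-chain $\sigma(C_x) \in \sigma$ (or its cocircuit counterpart in $\sigma^*$) to $E \setminus e$, and the disjoint-support feature of Lemma~\ref{lem:decompose} and Lemma~\ref{lem:breaking cocircuit} ensures that the sign at $x$ is preserved unambiguously through the decomposition. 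This is the step I expect to require the most care in the final write-up, but no genuine obstacle.
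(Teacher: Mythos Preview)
Your approach is correct in spirit but takes an unnecessary detour, and the final ``sign-tracking'' step is muddled as written.

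The paper's proof bypasses Lemma~\ref{lem:decompose} and Lemma~\ref{lem:breaking cocircuit} by observing directly that fundamental circuits and cocircuits restrict to fundamental circuits and cocircuits of the minor. Concretely, for $e \in B$ and $x \in E\setminus B$, the set $C_x \setminus e$ is already a circuit of $M/e$, because any circuit of $M/e$ properly contained in it would lie inside the independent set $B\setminus e$; hence $C_x\setminus e = C_x^{M/e}$, and $\sigma(C_x)\setminus e$ is a signed circuit of $M/e$ lying in $\sigma/e$ directly by Definition~\ref{def:signature_in_a_minor}. In two of the four cases ($\sigma^*$ in part (1) and $\sigma$ in part (2)), $e$ cannot even belong to the relevant fundamental circuit or cocircuit, so the restriction is literally the identity.

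Your route via Lemma~\ref{lem:decompose} reaches the same destination once you notice that the disjoint-support decomposition of $\sigma(C_x)\setminus e$ is necessarily trivial: any component not containing $x$ would have support inside $B\setminus e$, which is independent in $M/e$ and so cannot support a circuit. This is the observation missing from your last paragraph. Your proposed resolution there --- that the relevant component ``is literally obtained by restricting $\sigma(C_x)$'' --- is correct only \emph{because} the decomposition has a single piece, yet you assert it while still entertaining the possibility of a proper decomposition, which is internally inconsistent. Once you add the one-line independence argument, your proof collapses to the paper's.
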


\begin{proof}
We will only prove for $\sigma$, as the other half follows by duality. 

When $e\in B$, it is not hard to prove that the map $\vec C\mapsto\vec C\setminus e$ is a bijection between the signed fundamental circuits of $M$ and those of $M/e$. The assertion 
\[\fouri{F}(B\setminus e,\sigma/e) =\fouri{F}(B,\sigma)/e \]
is equivalent to the assertion that for any edge $f\in E\setminus B$, 
\[\fouri{F}(B\setminus e,\sigma/e)\oriat{f} =(\fouri{F}(B,\sigma)/e)\oriat{f} \]
because the edges in $B$ are all bioriented. The latter identity means for the fundamental circuit $C_f$ of $M$, the signed circuit $(\sigma/e)(C_f\setminus e)$ of $M/e$ equals $\sigma(C_f)\setminus e$, which holds due to Definition~\ref{def:signature_in_a_minor}. 

When $e\notin B$, it is not hard to prove that the map $\vec C\mapsto\vec C\setminus e$ is a bijection between the signed fundamental circuits of the edges other than $e$ in $M$ and the signed fundamental circuits of $M\setminus e$. (In this case, $\vec C=\vec C\setminus e$.) The assertion 
\[\fouri{F}(B,\sigma\setminus e)=\fouri{F}(B,\sigma)\setminus e\]
is equivalent to that for any edge $f\in E\setminus (B\cup e)$, 
\[\fouri{F}(B,\sigma\setminus e)\oriat{f}=(\fouri{F}(B,\sigma)\setminus e)\oriat{f}. \]
The latter identity means for the fundamental circuit $C_f$ of $M$, the signed circuit $(\sigma\setminus e)(C_f\setminus e)$ of $M\setminus e$ equals $\sigma(C_f)\setminus e$, which holds due to Definition~\ref{def:signature_in_a_minor}. 

\end{proof}

Using Lemma~\ref{lem:bases c/d}, we can easily prove the following two lemmas. 


\begin{lemma}
Let $\sigma$ be a triangulating circuit signature, $\sigma^*$ be a triangulating cocircuit signature, and $e$ be an element. Then $\sigma\setminus e$ and $\sigma/e$ are triangulating circuit signatures and $\sigma^*\setminus e$ and $\sigma^*/e$ are triangulating cocircuit signatures.
\end{lemma}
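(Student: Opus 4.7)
The plan is to reduce everything to the circuit case by duality (Definition~\ref{def:dual_OM}) and then argue deletion and contraction separately using Lemma~\ref{lem:bases c/d}. Under duality, a triangulating cocircuit signature of $M$ becomes a triangulating circuit signature of $M^*$, and deletion/contraction of an element swap; hence if we establish that $\sigma\setminus e$ and $\sigma/e$ are triangulating circuit signatures of $M\setminus e$ and $M/e$, the cocircuit statements follow by passing through $M^*$.

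For the deletion statement, pick distinct $B_1,B_2\in\mathbf{B}(M\setminus e)$. Since $e$ is not a coloop, $B_1$ and $B_2$ are also distinct bases of $M$ not containing $e$, so by Lemma~\ref{lem:bases c/d}(2) we have $\fouri{F}(B_i,\sigma\setminus e)=\fouri{F}(B_i,\sigma)\setminus e$, and therefore
\[\fouri{F}(B_1,\sigma\setminus e)\cap -\fouri{F}(B_2,\sigma\setminus e) \;=\; \bigl(\fouri{F}(B_1,\sigma)\cap -\fouri{F}(B_2,\sigma)\bigr)\setminus e.\]
If some $\vec{C}\in\SignedCircuits{M\setminus e}$ were compatible with the left-hand side, then by Definition~\ref{def:OM_minors} $\vec{C}$ lifts (with zero at the $e$-coordinate) to a signed circuit $\vec{C}'\in\SignedCircuits{M}$. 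Since the $e$-coordinate of $\vec{C}'$ is zero, $\vec{C}'$ is compatible with $\fouri{F}(B_1,\sigma)\cap -\fouri{F}(B_2,\sigma)$, contradicting the triangulating property of $\sigma$.

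For the contraction statement, pick distinct $B_1',B_2'\in\mathbf{B}(M/e)$. They are of the form $B_i'=B_i\setminus e$ for distinct bases $B_1,B_2$ of $M$ containing $e$. By Lemma~\ref{lem:bases c/d}(1), $\fouri{F}(B_i',\sigma/e)=\fouri{F}(B_i,\sigma)/e$, and so
\[\fouri{F}(B_1',\sigma/e)\cap -\fouri{F}(B_2',\sigma/e) \;=\; \bigl(\fouri{F}(B_1,\sigma)\cap -\fouri{F}(B_2,\sigma)\bigr)/e.\]
Now suppose $\vec{C}\in\SignedCircuits{M/e}$ is compatible with this fourientation. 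By Definition~\ref{def:OM_minors}, $\vec{C}=\vec{C}'\setminus e$ for some $\vec{C}'\in\SignedCircuits{M}$. The key observation is that, because $e\in B_1\cap B_2$, both $\fouri{F}(B_1,\sigma)\oriat{e}=\pm$ and $\fouri{F}(B_2,\sigma)\oriat{e}=\pm$, so the intersection also has value $\pm$ at $e$, which is compatible with every possible $\vec{C}'\oriat{e}\in\{-1,0,1\}$. On the remaining coordinates $\vec{C}'$ agrees with $\vec{C}$, which is compatible with the restriction. Hence $\vec{C}'$ is compatible with $\fouri{F}(B_1,\sigma)\cap -\fouri{F}(B_2,\sigma)$, again contradicting the triangulating property of $\sigma$.

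The only subtle step is the contraction case, where one must observe that the new $e$-coordinate of the intersection is $\pm$ so that the lifted circuit in $M$ remains compatible regardless of its coefficient at $e$; the deletion case and the cocircuit half (via duality) are then essentially formal consequences of Lemma~\ref{lem:bases c/d} and Definition~\ref{def:OM_minors}.
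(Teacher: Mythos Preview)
Your proof is correct and follows essentially the same approach as the paper: reduce to the circuit case and use Lemma~\ref{lem:bases c/d} to identify the minor fourientation as a restriction of the original one, then lift a hypothetical compatible signed circuit back to $M$ to contradict the triangulating property of $\sigma$. The paper only writes out the deletion case and dismisses the rest as ``similar,'' whereas you spell out the contraction case (correctly noting that $e\in B_1\cap B_2$ forces the intersection fourientation to be $\pm$ at $e$, so any lift remains compatible) and make the duality reduction for the cocircuit half explicit; these additional details are all sound and fill in exactly what the paper omits.
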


\begin{proof} 
We only prove that $\sigma\setminus e$ is triangulating, as the other cases are similar.
Take any bases $B_1$ and $B_2$ of $M\setminus e$. Then, $B_1$ and $B_2$ are also bases of $M$, and they do not contain $e$.
By Definition~\ref{def:triangulating}, $\fouri{F}:=\fouri{F}(B_1,\sigma)\cap-\fouri{F}(B_2,\sigma)$ is not compatible with any signed circuits of $M$. By Lemma~\ref{lem:bases c/d}, $\fouri{F}(B_1,\sigma\setminus e)\cap-\fouri{F}(B_2,\sigma\setminus e)=\fouri{F}\setminus e$, which certainly is not compatible with any signed circuits of $M\setminus e$.
By Definition~\ref{def:triangulating}, it follows that $\sigma\setminus e$ is triangulating.
\end{proof}

The next lemma establishes the relation between the BBY bijection of a regular matroid $M$ and the BBY bijection of $M\setminus e$ or $M/e$.   
\begin{lemma} \label{lem:BBY_delcont}
Let $B$ be a basis of $M$ and $e \in E$. 
\begin{enumerate}
    \item If $e \in E \setminus B$, then for all $x \in E \setminus e$, we have \[\BBY_{(M\setminus e,\sigma\setminus e,\sigma^*\setminus e)}(B)\oriat{x}=\BBY_{(M,\sigma,\sigma^*)}(B)\oriat{x}.\]
    \item If $e \in B$, then for all $x \in E \setminus e$, we have \[\BBY_{(M/ e,\sigma/ e,\sigma^*/e)}(B\setminus e)\oriat{x}=\BBY_{(M,\sigma,\sigma^*)}(B)\oriat{x}.\]
\end{enumerate}
\end{lemma}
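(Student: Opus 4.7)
The plan is to deduce the lemma directly from Lemma~\ref{lem:bases c/d}, together with the observation that the deletion and contraction operations on fourientations commute with pointwise intersection. In particular, almost no new combinatorial work is needed; the content has already been packaged into Lemma~\ref{lem:bases c/d}, and all that remains is to combine those identities through the intersection defining $\BBY$.

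First, I would unpack the definition of the BBY map: for any basis $B$, the orientation $\BBY_{(M,\sigma,\sigma^*)}(B)$ is (as an element of $\Orientations M\subset\Fourientations M$) exactly the intersection $\fourmap{B}{\sigma}\cap\fourmap{B}{\sigma^*}$ of the two fourientations of Definition~\ref{def:basisfouri}; on $B$ the $\sigma$-piece is $\pm$ while the $\sigma^*$-piece is a single sign, and on $E\setminus B$ the roles swap, so the intersection is indeed a genuine orientation. Next, by Definition~\ref{def:fourientation c/d} both $\fouri F\setminus e$ and $\fouri F/e$ are just the restriction of $\fouri F$ to $E\setminus e$; hence for every $x\in E\setminus e$ and every pair of fourientations $\fouri F_1,\fouri F_2$,
\[
(\fouri F_1\cap\fouri F_2)\setminus e\,\oriat{x}
=(\fouri F_1\setminus e)\oriat{x}\cap(\fouri F_2\setminus e)\oriat{x},
\]
and the same identity holds with $/e$ in place of $\setminus e$.

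For case (1), where $e\in E\setminus B$, the set $B$ remains a basis of $M\setminus e$. Applying Lemma~\ref{lem:bases c/d}(2) to both $\sigma$ and $\sigma^*$ gives $\fourmap{B}{\sigma\setminus e}=\fourmap{B}{\sigma}\setminus e$ and $\fourmap{B}{\sigma^*\setminus e}=\fourmap{B}{\sigma^*}\setminus e$. Intersecting and evaluating on any $x\in E\setminus e$ yields
\[
\BBY_{(M\setminus e,\sigma\setminus e,\sigma^*\setminus e)}(B)\oriat{x}
=\bigl(\fourmap{B}{\sigma}\cap\fourmap{B}{\sigma^*}\bigr)\setminus e\,\oriat{x}
=\BBY_{(M,\sigma,\sigma^*)}(B)\oriat{x}.
\]
Case (2), with $e\in B$, is entirely dual: $B\setminus e$ is a basis of $M/e$, and Lemma~\ref{lem:bases c/d}(1) gives $\fourmap{B\setminus e}{\sigma/e}=\fourmap{B}{\sigma}/e$ together with $\fourmap{B\setminus e}{\sigma^*/e}=\fourmap{B}{\sigma^*}/e$, and the identical coordinate-wise argument with $/e$ in place of $\setminus e$ finishes the proof.

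The only place where a subtlety could conceivably enter is the commutation of intersection with $\setminus e$ and $/e$, but this is immediate once one remembers that Definition~\ref{def:fourientation c/d} treats both operations on a fourientation as literal restriction of a map to $E\setminus e$. The ``real'' combinatorial step, namely translating between signatures and their minors, has already been carried out inside Lemma~\ref{lem:bases c/d}, so the present lemma reduces to pure bookkeeping and I expect no genuine obstacle.
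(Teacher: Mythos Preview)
Your proposal is correct and follows essentially the same approach as the paper: unpack $\BBY$ as the intersection $\fourmap{B}{\sigma}\cap\fourmap{B}{\sigma^*}$, invoke Lemma~\ref{lem:bases c/d} for each factor, and observe that restriction to $E\setminus e$ commutes with pointwise intersection. The paper's proof is terser (it leaves the commutation of intersection with restriction implicit), but the content is identical.
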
 
\begin{proof}

By definition, we have\[\BBY_{(M\setminus e,\sigma\setminus e,\sigma^*\setminus e)}(B)=\fouri{F}(B,\sigma\setminus e)\cap\fouri{F}(B,\sigma^*\setminus e)\]and\[\BBY_{(M,\sigma,\sigma^*)}(B)=\fouri{F}(B,\sigma)\cap\fouri{F}(B,\sigma^*).\]

By Lemma~\ref{lem:bases c/d}, part (1) holds. Part (2)
can be proved similarly. 

\end{proof}

\subsection{The Main Theorem}

Let us restate our main theorem, and give an illustrating example.

\begin{customthm}{\ref{thm:consistency}}
    Let $M$ be a regular matroid and $(\sigma,\sigma^*)$ be a triangulating circuit-cocircuit signature. Suppose that $\vec f$ is an arc and $B_1,B_2 \in \mathbf B(M)$ such that
    \[[\vec f] \cdot \BBY_{(M,\sigma,\sigma^*)}(B_1)=\BBY_{(M,\sigma,\sigma^*)}(B_2).\] Then, the following 3 properties must hold. 
\begin{enumerate}
    \item For any $e \in (B_1^c \cap B_2^c)\setminus f$, we have
    \[ [\vec f] \cdot \BBY_{(M\setminus e,\sigma\setminus e,\sigma^*\setminus e)}(B_1)=\BBY_{(M\setminus e,\sigma\setminus e,\sigma^*\setminus e)}(B_2).\]
    \item For any $e \in (B_1 \cap B_2)\setminus f$, we have
    \[ [\vec f] \cdot \BBY_{(M/ e,\sigma/ e,\sigma^*/ e)}(B_1\setminus e)=\BBY_{(M/ e,\sigma/ e,\sigma^*/ e)}(B_2\setminus e).\]
    \item For any $e\in E$ that is in a different connected component of $M$ than $f$, we have  
    \[e \in B_1 \iff e \in B_2.\]
\end{enumerate}
\end{customthm}


In Section~\ref{sec:torsors}, we will discuss the motivation for Theorem~\ref{thm:consistency}. In particular, this result shows that the BBY algorithm is a \emph{consistent sandpile torsor algorithm for oriented regular matroids with triangulating circuit-cocircuit signatures}. 

\begin{ex}\label{ex:for_main_theorem}
    Take the graphic matroid $M$ from Figure \ref{fig:Changxin's_graph} with the cycle-cocycle signature $(\sigma,\sigma^*)$ from Example \ref{ex:signature}. 
    The first row of Figure \ref{fig:main_thm_ex} demonstrates Theorem~\ref{thm:consistency}(1) and the second row demonstrates Theorem~\ref{thm:consistency}(2).
    The depicted orientations are the circuit-cocircuit minimal orientations assigned to the spanning trees by the BBY bijection.

    Let us explain the first row.
    The upper left panel shows the action of $\vec {f_1}$ on the basis $\{f_1,f_3\}$: The action produces $\{f_2, f_3\}$ as explained in Example \ref{ex:BBY_action}.
    We have $\sigma\setminus f_4 = \{\vec {f_1}-\vec {f_2} +\vec {f_3}\}$, and $\sigma^*\setminus f_4 = \{-\vec {f_1} + \vec {f_3}, -\vec {f_1} -\vec {f_2}, \vec {f_2} + \vec {f_3}\}$.
    One can check that indeed $[\vec {f_1}] \cdot \BBY_{(M\setminus f_4, \sigma\setminus f_4, \sigma^*\setminus f_4)}(\{f_1,f_3\})= \BBY_{(M\setminus f_4, \sigma\setminus f_4, \sigma^*\setminus f_4)}(\{f_2,f_3\})$. 
\end{ex}

\begin{figure}
    \begin{center}
    \begin{tikzpicture}[scale=.8]
    \node[fill = none] at (-6,1) {$(1)$};
    \node[fill = none] at (-6,-2) {$(2)$};
    
    \draw (-5.2,2.2) -- (1.2,2.2) -- (1.2,-.6) -- (-5.2,-.6) -- cycle ;
    \draw (3.8,2.2) -- (10.3,2.2) -- (10.3,-.6) -- (3.8,-.6) -- cycle ;

    \draw (-5.2,-.8) -- (1.2,-.8) -- (1.2,-3.6) -- (-5.2,-3.6) -- cycle ;
    \draw (3.8,-.8) -- (10.3,-.8) -- (10.3,-3.6) -- (3.8,-3.6) -- cycle ;
     
    \tikzstyle{o}=[circle,fill,scale=.5,draw]
    \begin{scope}[shift={(-5,0)}]
	\node [o] (1) at (0,0) {};
	\node [o] (2) at (1,1.8) {};
	\node [o] (3) at (2,0) {};
	\draw [thick,<-,>=stealth'] (2) to node[fill = white,inner sep=1pt]{\footnotesize $f_1$} (1);
	\draw [thick,->,>=stealth',bend left=30] (1) to node[fill = white,inner sep=1pt]{\footnotesize $f_3$} (3);
	\draw [thick,->,>=stealth',bend right=30,dashed] (1) to node[fill = white,inner sep=1pt]{\footnotesize $f_4$} (3);
	\draw [thick,<-,>=stealth',dashed] (2) to node[fill = white,inner sep=1pt]{\footnotesize $f_2$} (3);
 \draw[{-{Stealth[length = 2mm, width = 2mm, open]}}] (2.5,.5) -- (3.5,.5);
 \node[fill = none] at (3,.95) {\small $[\vec {f_1}] \cdot$};

    \end{scope}
    
    \begin{scope}[shift={(-1,0)}]
	\node [o] (1) at (0,0) {};
	\node [o] (2) at (1,1.8) {};
	\node [o] (3) at (2,0) {};
	\draw [thick,->,>=stealth',dashed] (2) to node[fill = white,inner sep=1pt]{\footnotesize $f_1$} (1);
	\draw [thick,->,>=stealth',bend left = 30] (1) to node[fill = white,inner sep=1pt]{\footnotesize $f_3$} (3);
	\draw [thick,->,>=stealth',bend right = 30, dashed] (1) to node[fill = white,inner sep=1pt]{\footnotesize $f_4$} (3);
	\draw [thick,<-,>=stealth'] (2) to node[fill = white,inner sep=1pt]{\footnotesize $f_2$} (3);

  \draw[{-{Stealth[length = 3mm, width = 2.5mm, open,color = blue]},color = blue}] (2.5,.5) -- (4.5,.5);
 \node[fill = none] at (3.5,.95) {\color{blue}{\small{Delete $f_4$}}};
    \end{scope}
    
    \begin{scope}[shift={(4,0)}]
	\node [o] (1) at (0,0) {};
	\node [o] (2) at (1,1.8) {};
	\node [o] (3) at (2,0) {};
	\draw [thick,<-,>=stealth'] (2) to node[fill = white,inner sep=1pt]{\footnotesize $f_1$} (1);
	\draw [thick,->,>=stealth',bend left=10] (1) to node[fill = white,inner sep=1pt]{\footnotesize $f_3$} (3);
	\draw [thick,<-,>=stealth',dashed] (2) to node[fill = white,inner sep=1pt]{\footnotesize $f_2$} (3);
 \draw[{-{Stealth[length = 2mm, width = 2mm, open]}}] (2.5,.5) -- (3.5,.5);
 \node[fill = none] at (3,.95) {\small $[\vec {f_1}] \cdot$};
    \end{scope}
    
    \begin{scope}[shift={(8,0)}]
	\node [o] (1) at (0,0) {};
	\node [o] (2) at (1,1.8) {};
	\node [o] (3) at (2,0) {};
	\draw [thick,->,>=stealth',dashed] (2) to node[fill = white,inner sep=1pt]{\footnotesize $f_1$} (1);
	\draw [thick,->,>=stealth',bend left = 10] (1) to node[fill = white,inner sep=1pt]{\footnotesize $f_3$} (3);
	\draw [thick,<-,>=stealth'] (2) to node[fill = white,inner sep=1pt]{\footnotesize $f_2$} (3);
    \end{scope}

    \begin{scope}[shift={(-5,-3)}]
	\node [o] (1) at (0,0) {};
	\node [o] (2) at (1,1.8) {};
	\node [o] (3) at (2,0) {};
    \draw [thick,<-,>=stealth'] (2) to node[fill = white,inner sep=1pt]{\footnotesize $f_1$} (1);
	\draw [thick,<-,>=stealth',dashed,bend left = 30] (1) to node[fill = white,inner sep=1pt]{\footnotesize $f_3$} (3);
	\draw [thick,->,>=stealth',bend right = 30] (1) to node[fill = white,inner sep=1pt]{\footnotesize $f_4$} (3);
	\draw [thick,<-,>=stealth',dashed] (2) to node[fill = white,inner sep=1pt]{\footnotesize $f_2$} (3);
 \draw[{-{Stealth[length = 2mm, width = 2mm, open]}}] (2.5,.5) -- (3.5,.5);
 \node[fill = none] at (3,.95) {\small $[\vec {f_1}] \cdot$};
    \end{scope}

    \begin{scope}[shift={(-1,-3)}]
	\node [o] (1) at (0,0) {};
	\node [o] (2) at (1,1.8) {};
	\node [o] (3) at (2,0) {};
 
	\draw [thick,->,>=stealth',dashed] (2) to node[fill = white,inner sep=1pt]{\footnotesize $f_1$} (1);
	\draw [thick,<-,>=stealth',dashed,bend left = 30] (1) to node[fill = white,inner sep=1pt]{\footnotesize $f_3$} (3);
	\draw [thick,->,>=stealth',bend right = 30] (1) to node[fill = white,inner sep=1pt]{\footnotesize $f_4$} (3);
	\draw [thick,<-,>=stealth'] (2) to node[fill = white,inner sep=1pt]{\footnotesize $f_2$} (3);

 \draw[{-{Stealth[length = 3mm, width = 2.5mm, open,color = red]},color = red}] (2.5,.5) -- (4.5,.5);
 \node[fill = none] at (3.5,.95) {\color{red}{\small{Contract $f_4$}}};
    \end{scope}

    \begin{scope}[shift={(5.1,-3)}]
	\node [o] (1) at (0,0) {};
	\node [o] (2) at (0,1.8) {};
	\draw [thick,<-,>=stealth',bend right = 90,looseness=1.5] (2) to node[fill = white,inner sep=1pt]{\footnotesize $f_1$} (1);
	\draw [thick,->,>=stealth',out=60,in=120,looseness=30,dashed] (1) to node[fill = white,inner sep=1pt]{\footnotesize $f_3$} (1);
	\draw [thick,<-,>=stealth',bend left=90,dashed,looseness=1.5] (2) to node[fill = white,inner sep=1pt]{\footnotesize $f_2$} (1);
 \draw[{-{Stealth[length = 2mm, width = 2mm, open]}}] (1.4,.5) -- (2.4,.5);
 \node[fill = none] at (1.9,.95) {\small $[\vec {f_1}] \cdot$};
    \end{scope}

    \begin{scope}[shift={(9.1,-3)}]
	\node [o] (1) at (0,0) {};
	\node [o] (2) at (0,1.8) {};
	\draw [thick,->,>=stealth',bend right = 90,dashed,looseness=1.5] (2) to node[fill = white,inner sep=1pt]{\footnotesize $f_1$} (1);
	\draw [thick,->,>=stealth',out=60,in=120,looseness=30,dashed] (1) to node[fill = white,inner sep=1pt]{\footnotesize $f_3$} (1);
	\draw [thick,<-,>=stealth',bend left=90,looseness=1.5] (2) to node[fill = white,inner sep=1pt]{\footnotesize $f_2$} (1);
    \end{scope}
\end{tikzpicture}
\end{center}
\caption{Above are illustrations for the first two parts of Theorem \ref{thm:consistency}. See Example \ref{ex:for_main_theorem} for details.}
\label{fig:main_thm_ex}
\end{figure}
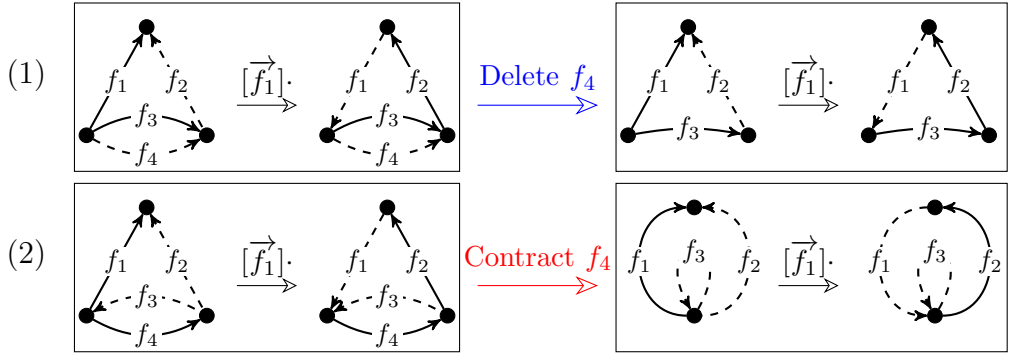

\section{Sandpile Torsor Algorithms, Consistency, and Special Cases} \label{sec:torsors}

In this section, we elaborate on the motivation for Theorem~\ref{thm:consistency}, which is the main result of the paper. Then we will see some applications of Theorem~\ref{thm:consistency} to the special cases of acyclic signatures and planar graphs. However, none of the material in this section will be used in our proof of Theorem~\ref{thm:consistency}, so readers primarily interested in the proof can safely skip to Section~\ref{sec:action}.

\subsection{Sandpile Torsors}\label{sec:torsordefs}

The term \emph{sandpile torsor} was coined by Ellenberg in a MathOverflow post~\cite{JSE}. Later, Ganguly and the second author defined \emph{sandpile torsor actions} and \emph{sandpile torsor algorithms} on plane graphs. Here, we will generalize their definition to apply to a larger class of objects. 

In particular, we will work with a \emph{minor-closed class of regular matroids with auxiliary data}. More precisely, let $\mathbf M$ be a set of regular oriented matroids where each $M \in \mathbf M$ is associated with a class of auxiliary data, which we denote $\mathbf A(M)$. We need the following properties to be satisfied:
\begin{itemize}
    \item For any $M \in \mathbf M$ and $e$ in the ground set of $M$, we must have $M\setminus e \in \mathbf M$ and $M /e \in \mathbf M$. 
    \item For any $M \in \mathbf M$, $\mathcal A \in \mathbf A(M)$, and $e$ in the ground set of $M$, there must be a natural way to define $\mathcal A\setminus e \in \mathbf A(M\setminus e)$ and $\mathcal A / e \in \mathbf A(M/e)$. 
\end{itemize}
Below are three examples of minor-closed classes of regular matroids with auxiliary data that can be used as the setting for sandpile torsors. 

\begin{enumerate}
    \item Let $\mathbf M$ be the set of all regular oriented matroids. For $M \in \mathbf M$, let $\mathbf A(M)$ be the set of triangulating circuit-cocircuit signatures of $M$. This has been the primary setting for this paper. 
    \item Let $\mathbf M$ be the set of all regular oriented matroids. For $M \in \mathbf M$, let $\mathbf A(M)$ be the set of \emph{acyclic} circuit-cocircuit signatures of $M$ (defined below). This is the setting used in~\cite{BBY} and we will discuss it further in Section~\ref{sec:acylic}.  
    \item Let $\mathbf M$ be the set of oriented graphic matroids for \emph{planar graphs}. For $M \in \mathbf M$, let $\mathbf A(M)$ be the set of possible planar embeddings of the graph associated with $M$. This is the setting used in~\cite{GM} and we will discuss it further in Section~\ref{sec:planar}.
\end{enumerate}

We remark that further examples can be constructed by restricting (1) or (2) to any minor-closed family of regular oriented matroids. For example, the case of graphic matroids were implicitly studied in \cite{YCH}. (3) can also be further restricted to a minor-closed family of planar graphs by mirroring the minor operation to planar embeddings. 

Recall that for any regular matroid $M$, the size of the set of bases $\mathbf B(M)$ is always the same as the size of the sandpile group $S(M)$. 

\begin{defn}\label{def:STaction} 
    Let $M$ be a particular oriented regular matroid along with some auxiliary structure. A \emph{sandpile torsor action} is a choice of simply transitive action of the sandpile group $S(M)$ on the bases $\mathbf B(M)$ given any $\mathcal A \in \mathbf A(M)$. 
\end{defn}

We remark that it is hopeless to give canonical simply transitive actions of $S(M)$ on $\mathbf B(M)$ without some kind of auxiliary data, see~\cite[Remark 2.11]{GM}.


\begin{defn}\label{def:STA}A \emph{sandpile torsor algorithm} for a minor-closed class of regular matroids with auxiliary data is a choice of sandpile torsor action for each $M\in \mathbf M$ and $\mathcal{A}\in\mathbf{A}(M)$. 
\end{defn}

In other words, a \emph{sandpile torsor action} is a simply transitive action of the sandpile group on the bases for a \emph{particular} regular matroid (with auxiliary data), while a \emph{sandpile torsor algorithm} is a general construction for every possible $M \in \mathbf M$ and $\mathcal A \in \mathbf A(M)$. 

Given a sandpile torsor algorithm $\Gamma$, we will write $\Gamma_{(M,\mathcal A)}$ for the sandpile torsor action on a specific matroid $M$ with auxiliary data $\mathcal A$. In particular, for any $s \in S(M)$ and $B, B' \in \mathbf B(M)$, the equality 
\[\Gamma_{(M,\mathcal A)}(s,B) = B'\]
says that $s$ maps $B$ to $B'$ with respect to the sandpile torsor action $\Gamma_{(M,\mathcal A)}$.

For the auxiliary structure of circuit cocircuit signatures, we investigate the following sandpile torsor algorithm in this paper.

\begin{defn}\label{def:BBYSTA}
For a regular matroid $M$, a triangulating circuit-cocircuit signature $(\sigma,\sigma^*)$, $s \in S(M)$, and $B\in \mathbf B(M)$, define
\[ \Gamma^\beta_{(M,\sigma,\sigma^*)}(s,B) := \beta_{(M,\sigma,\sigma^*)}^{-1}(s \cdot \beta_{(M,\sigma,\sigma^*)}(B)).\]
We call $\Gamma^\beta$ the \emph{BBY sandpile torsor algorithm (with the data of triangulating circuit-cocircuit signatures)}.
\end{defn} 


It is not hard to see that the construction from Definition~\ref{def:BBYSTA} can be generalized to other families of bijections between $\mathbf B(M)$ and $\ccmin(M)$. In particular, suppose that for each regular matroid $M$ and triangulating circuit cocircuit signature $(\sigma,\sigma^*)$, we have a bijection $\alpha_{(M,\sigma,\sigma^*)}: \mathbf B(M) \to \ccmin(M)$. Then, we obtain a sandpile torsor algorithm $\Gamma^\alpha$ by setting 

\begin{equation}\label{eq:Bij_to_torsor}\Gamma^\alpha_{(M,\sigma,\sigma^*)}(s,B) := \alpha_{(M,\sigma,\sigma^*)}^{-1}(s \cdot \alpha_{(M,\sigma,\sigma^*)}(B)).\end{equation}

A natural question is whether every sandpile torsor algorithm for regular matroids (with the data of triangulating circuit-cocircuit signatures) can be constructed from a family of bijections using~\eqref{eq:Bij_to_torsor}. In the following theorem, we show that this is indeed the case (despite the fact that Definition~\ref{def:STaction} makes no mention of the canonical action). 

\begin{thm}\label{thm:Torsor_to_bij}
Let $\Gamma$ be a sandpile torsor algorithm for regular matroids (with the data of triangulating circuit-cocircuit signatures). For any regular matroid $M$ and triangulating circuit-cocircuit signature $(\sigma,\sigma^*)$, there exists a bijection $\alpha_{(M,\sigma,\sigma^*)}: \mathbf B(M) \to \ccmin(M)$ such that $\Gamma_{(M,\sigma,\sigma^*)}(s,B) = \alpha_{(M,\sigma,\sigma^*)}^{-1}(s \cdot \alpha_{(M,\sigma,\sigma^*)}(B))$. More precisely, there are exactly $|\mathbf B(M)|$ such bijections. 
\end{thm}
\begin{proof}
Fix any pair $(\ori O ,B) \in \ccmin(M)\times \mathbf B(M)$, and then set $\alpha_{(M,\sigma,\sigma^*)}(B) = \ori O$. Next, consider an element $B' \in \mathbf B(M)$. Since $\Gamma_{(M,\sigma,\sigma^*)}$ is simply transitive, there must be some $s \in S(M)$ such that $\Gamma_{(M,\sigma,\sigma^*)}(s,B) = B'$. Set $\alpha_{(M,\sigma,\sigma^*)}(B') = s\cdot \ori O$. This construction can be used to define the map $\alpha_{(M,\sigma,\sigma^*)}$ for all $B' \in \mathbf B(M)$. It follows from the fact that $\Gamma_{(M,\sigma,\sigma^*)}$ and the canonical action are simply transitive that $\alpha$ must be a bijection. Note that the bijection $\alpha_{(M,\sigma,\sigma^*)}$ satisfies $\Gamma_{(M,\sigma,\sigma^*)}(s,B) = \alpha_{(M,\sigma,\sigma^*)}^{-1}(s \cdot \alpha_{(M,\sigma,\sigma^*)}(B))$, and any desired bijection can be constructed in this way.

To count the number of bijections, notice that there are $|\mathbf B(M)|^2$ initial pairs we could have chosen, and each bijection will be produced from $|\mathbf B(M)|$ pairs. Thus, there are $|\mathbf B(M)|$ possible bijections.
\end{proof}

\subsection{A General Definition of Consistency}
The choices of sandpile torsor actions in Definition~\ref{def:STA} can be arbitrary across different matroids, hence having very little structure. In order to impose meaningful relations among the choices, Ganguly and the second author introduced the concept of \emph{consistency} of sandpile torsor algorithms. However, they primarily worked in the context of plane graphs~\cite{GM}. In this section, we give a generalization of consistency to any minor-closed class of regular matroids with auxiliary data. 

The following definition rephrases the properties of Theorem~\ref{thm:consistency} in the language of sandpile torsor algorithms. 

\begin{defn}\label{def:general_consistency}
We say that the sandpile torsor algorithm $\Gamma$ is \emph{consistent} if for any $M \in \mathbf M$, any $\mathcal A \in \mathbf A(M)$, any $B \in \mathbf B(M)$, and any arc $\vec f$, we have:
\begin{enumerate}
    \item For any $e \in (B^c \cap \Gamma_{(M,\mathcal{A})}([\vec f],B)^c)\setminus f$, we have
    \[\Gamma_{(M\setminus e,\mathcal{A}\setminus e)}([\vec f], B) = \Gamma_{(M,\mathcal{A})}([\vec f],B).\]
    \item For any $e \in (B \cap \Gamma_{(M,\mathcal{A})}([\vec f],B))\setminus f$, we have
    \[ \Gamma_{(M/ e,\mathcal{A}/e)}([\vec f], B) = \Gamma_{(M,\mathcal{A})}([\vec f],B) \setminus e.\]
    \item If $e$ and $f$ are in different connected components of $M$, then 
    \[e \in B \iff e \in \Gamma_{(M,\mathcal{A})}([\vec f],B)\]
\end{enumerate}
\end{defn}

The following result is immediate from Theorem~\ref{thm:consistency}.

\begin{cor}\label{cor:BBYconsistent} 
    The BBY sandpile torsor algorithm (with the auxiliary data of triangulating circuit-cocircuit signatures) is consistent.  
\end{cor}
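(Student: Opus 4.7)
The plan is to directly translate Theorem~\ref{thm:consistency} into the language of Definition~\ref{def:general_consistency} via the defining formula for $\Gamma^\beta$ in Definition~\ref{def:BBYSTA}. Fix a regular matroid $M$, a triangulating signature $(\sigma,\sigma^*)$, a basis $B$, and an arc $\vec{f}$. I would set $B_1 := B$ and $B_2 := \Gamma^\beta_{(M,\sigma,\sigma^*)}([\vec f], B)$. By Definition~\ref{def:BBYSTA}, this immediately yields $\beta_{(M,\sigma,\sigma^*)}(B_2) = [\vec f] \cdot \beta_{(M,\sigma,\sigma^*)}(B_1)$, which is precisely the hypothesis of Theorem~\ref{thm:consistency}. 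Hence all three conclusions of that theorem are available for this choice of $B_1, B_2$.

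For property (1) of Definition~\ref{def:general_consistency}, the index set $(B^c \cap \Gamma^\beta_{(M,\sigma,\sigma^*)}([\vec f], B)^c)\setminus f$ is by construction equal to $(B_1^c \cap B_2^c)\setminus f$, so Theorem~\ref{thm:consistency}(1) gives $[\vec f] \cdot \beta_{(M\setminus e,\sigma\setminus e,\sigma^*\setminus e)}(B_1) = \beta_{(M\setminus e,\sigma\setminus e,\sigma^*\setminus e)}(B_2)$. Applying $\beta_{(M\setminus e,\sigma\setminus e,\sigma^*\setminus e)}^{-1}$ and unwinding Definition~\ref{def:BBYSTA} yields $\Gamma^\beta_{(M\setminus e,\sigma\setminus e,\sigma^*\setminus e)}([\vec f], B) = B_2 = \Gamma^\beta_{(M,\sigma,\sigma^*)}([\vec f], B)$, as desired. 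Property (2) follows analogously: the condition $e \in (B \cap \Gamma^\beta_{(M,\sigma,\sigma^*)}([\vec f], B))\setminus f$ is the condition $e \in (B_1\cap B_2)\setminus f$ of Theorem~\ref{thm:consistency}(2), and the conclusion $[\vec f] \cdot \beta_{(M/e,\sigma/e,\sigma^*/e)}(B_1\setminus e) = \beta_{(M/e,\sigma/e,\sigma^*/e)}(B_2\setminus e)$ translates, under Definition~\ref{def:BBYSTA}, to $\Gamma^\beta_{(M/e,\sigma/e,\sigma^*/e)}([\vec f], B\setminus e) = B_2\setminus e = \Gamma^\beta_{(M,\sigma,\sigma^*)}([\vec f], B)\setminus e$. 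Property (3) is a verbatim restatement of Theorem~\ref{thm:consistency}(3) after the same substitution $B_1 = B$, $B_2 = \Gamma^\beta_{(M,\sigma,\sigma^*)}([\vec f], B)$.

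There is essentially no obstacle beyond bookkeeping; the corollary is a direct dictionary translation via Definitions~\ref{def:BBYSTA} and~\ref{def:general_consistency}. The only minor point worth flagging is the implicit identification of the basis $B\in \mathbf{B}(M)$ with its restriction $B\setminus e \in \mathbf{B}(M/e)$ in the contraction clause of Definition~\ref{def:general_consistency}, which matches the convention used in the statement of Theorem~\ref{thm:consistency}(2). One should also verify that the class $\mathbf{A}(M)$ of triangulating circuit-cocircuit signatures is closed under the deletion and contraction operations of Definition~\ref{def:signature_in_a_minor}; this is exactly the lemma proved in the Background section, so nothing new is required.
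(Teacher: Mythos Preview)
Your proposal is correct and matches the paper's approach exactly: the paper simply states that the corollary is immediate from Theorem~\ref{thm:consistency}, and you have spelled out in detail the straightforward dictionary translation between Theorem~\ref{thm:consistency} and Definition~\ref{def:general_consistency} via Definition~\ref{def:BBYSTA}. Your flagging of the implicit identification $B \leftrightarrow B\setminus e$ in the contraction clause and of the minor-closedness of triangulating signatures is accurate and appropriately cautious, but neither point requires any additional work beyond what the paper already provides.
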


In the next subsection, we will show that this result also holds when \emph{acyclic signatures} are used in place of \emph{triangulating signatures}.

\subsection{Acyclic signatures}\label{sec:acylic}

The notion of acyclic signatures was first introduced in \cite{YCH} by the fourth author for circuit signatures before being extended to circuit-cocircuit signatures in \cite{BBY}. \
The somewhat technical definition arrives naturally in the context of polyhedral geometry, which was the main theme of these two papers. 

\begin{defn}
A circuit signature $\sigma$ of a regular matroid $M$ is {\em acyclic} if the equation $\sum_{\vec C\in\sigma} \lambda_C\vec C={\bf 0}$ has no nonzero, non-negative solution $\lambda_C$'s. Define an acyclic cocircuit signature similarly.
\end{defn}

\begin{lemma}\cite{Ding2}\label{lem:acyctriang} 
    Acyclic signatures are triangulating.  
\end{lemma}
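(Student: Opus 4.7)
The plan is to argue the contrapositive of Lemma~\ref{lem:acyctriang} for circuit signatures; the cocircuit case follows by the dual argument applied to $M^*$, since signed cocircuits of $M$ are signed circuits of $M^*$ (Definition~\ref{def:dual_OM}) and a cocircuit signature of $M$ is literally a circuit signature of $M^*$. So suppose $\sigma$ is a circuit signature of $M$ that fails to be triangulating. By Definition~\ref{def:triangulating}, there exist distinct bases $B_1, B_2 \in \mathbf B(M)$ and a signed circuit $\vec C$ compatible with $\fourmap{B_1}{\sigma} \cap -\fourmap{B_2}{\sigma}$.

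Unpacking Definition~\ref{def:basisfouri}, the fourientation $\fourmap{B_i}{\sigma}$ is $\pm$ on $B_i$ and equals $\sigma(C_e^{B_i})\oriat{e}$ on $e \notin B_i$, where $C_e^{B_i}$ denotes the fundamental circuit of $e$ with respect to $B_i$. Intersecting and imposing $\vec C \compat \fourmap{B_1}{\sigma} \cap -\fourmap{B_2}{\sigma}$ translates into the following compatibility rules for every $e \in C$: if $e \in B_2 \setminus B_1$ or $e \notin B_1 \cup B_2$, then $\vec C\oriat{e} = \sigma(C_e^{B_1})\oriat{e}$; if $e \in B_1 \setminus B_2$ or $e \notin B_1 \cup B_2$, then $\vec C\oriat{e} = -\sigma(C_e^{B_2})\oriat{e}$; and the coefficients on $C \cap B_1 \cap B_2$ are unconstrained.

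The core step is a double decomposition of $\vec C \in \ker(A)$ relative to each basis. Since $\{\vec C_e^{B_1} : e \notin B_1\}$, with each $\vec C_e^{B_1}$ normalized to have coefficient $+1$ at $e$, is a basis of $\ker(A)$, we may uniquely write
\[
    \vec C = \sum_{e \in C \setminus B_1} \mu_e \cdot \sigma(C_e^{B_1}), \qquad \mu_e := \vec C\oriat{e} \cdot \sigma(C_e^{B_1})\oriat{e}.
\]
The first compatibility rule forces $\mu_e = +1$ for every $e \in C \setminus B_1$, since $C \setminus B_1 \subseteq (B_2 \setminus B_1) \cup (B_1^c \cap B_2^c)$. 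Running the same argument with $B_2$ in place of $B_1$ and using the second compatibility rule yields
\[
    -\vec C = \sum_{e \in C \setminus B_2} \sigma(C_e^{B_2}).
\]

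Adding the two identities gives
\[
    \sum_{e \in C \setminus B_1} \sigma(C_e^{B_1}) \;+\; \sum_{e \in C \setminus B_2} \sigma(C_e^{B_2}) \;=\; \vec 0,
\]
a non-negative integer combination of signed circuits in $\sigma$ equal to zero. It is nontrivial: because $B_1$ is a basis it contains no circuit, so $C \setminus B_1 \neq \emptyset$ and at least one coefficient equals $+1$. This contradicts acyclicity of $\sigma$. I do not anticipate a real obstacle; the only delicate point is the sign bookkeeping that converts the fourientation compatibility from Definition~\ref{def:fouricompat} into the exact values $\mu_e = +1$ and $\nu_e = -1$, and this is just direct case analysis.
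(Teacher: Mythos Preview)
The paper does not supply its own proof of this lemma; it merely cites \cite{Ding2}. Your argument is correct and self-contained. The key step---expanding $\vec C$ in the basis of fundamental circuits relative to $B_1$ and then to $B_2$, using the compatibility constraints from $\fourmap{B_1}{\sigma}\cap -\fourmap{B_2}{\sigma}$ to force all coefficients to be $+1$ (respectively $-1$), and adding to obtain a nontrivial nonnegative relation $\sum \sigma(C_e^{B_1}) + \sum \sigma(C_e^{B_2}) = \vec 0$---is sound. One small point worth making explicit: for $e\in C$ with $e\notin B_1\cup B_2$, compatibility with the intersection fourientation forces $\sigma(C_e^{B_1})\oriat{e} = -\sigma(C_e^{B_2})\oriat{e}$ (otherwise the fourientation is $\emptyset$ there and no signed circuit through $e$ could be compatible), so the two constraints you list are consistent. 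The duality reduction for the cocircuit case is also fine.
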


See \cite[Proposition~3.14]{Ding2} for an example of triangulating signature of a regular matroid (indeed, the graphical matroid of a planar graph) that is not acyclic.

\begin{lemma}\label{lem:acycminorclosed}
Let $\sigma$ be an acyclic circuit signature, and let $\sigma^*$ be an acyclic cocircuit signature, moreover, let $e$ be an arbitrary element of the ground set. Then $\sigma\setminus e$ and $\sigma/e$ are acyclic circuit signatures, and $\sigma^*\setminus e$ and $\sigma^*/e$ are acyclic cocircuit signatures.
\end{lemma}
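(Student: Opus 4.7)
The plan is to reduce to the circuit case via duality, dispose of deletion essentially trivially, and prove contraction by a short linear-algebra argument that uses the hypothesis that $e$ is not a loop. In each case the strategy is: lift a hypothetical nontrivial non-negative dependence in the minor to the same kind of dependence in $\sigma$ (respectively $\sigma^*$), and invoke acyclicity upstairs.

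For $\sigma \setminus e$: by Definition~\ref{def:OM_minors}, the signed circuits of $M \setminus e$ are exactly the $\vec C \setminus e$ with $\vec C \in \vec{\mathbf{C}}(M)$ and $\vec C \oriat{e} = 0$, so the elements of $\sigma \setminus e$ correspond bijectively to those $\vec C \in \sigma$ with $\vec C \oriat{e} = 0$. A nontrivial non-negative relation $\sum \lambda_D \vec D = \mathbf{0}$ in $\mathbb{Z}^{E \setminus e}$ then extends by $0$ on coordinate $e$ to the same nontrivial relation in $\mathbb{Z}^E$, contradicting the acyclicity of $\sigma$.

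For $\sigma / e$: I would first observe that every $\vec D \in \sigma/e$ has a \emph{unique} lift $\vec C_D \in \sigma$ with $\vec C_D \setminus e = \vec D$. Indeed, the underlying circuit $D$ of $M/e$ satisfies either that $D$ itself is a circuit of $M$ (take $\vec C_D = \sigma(D)$, which has $\vec C_D \oriat{e} = 0$) or that $D \cup \{e\}$ is a circuit of $M$ (take $\vec C_D = \sigma(D \cup \{e\})$, whose restriction to $E \setminus e$ is forced to equal $\vec D$). Given a nontrivial non-negative dependence $\sum \lambda_D \vec D = \mathbf{0}$, the lifted 1-chain $\vec P := \sum \lambda_D \vec C_D$ lies in $\ker(A)$, vanishes on $E \setminus e$, and therefore equals $k \vec e$ for some $k \in \mathbb{Z}$. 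Since $e$ is not a loop, the $e$-column of $A$ is nonzero, forcing $k = 0$, so $\vec P = \mathbf{0}$ in $\mathbb{Z}^E$, contradicting the acyclicity of $\sigma$.

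The cocircuit cases I plan to obtain by duality. Acyclicity depends only on the signed vectors themselves and not on whether they are interpreted as circuits or cocircuits, so an acyclic cocircuit signature $\sigma^*$ of $M$ is also an acyclic circuit signature of $M^*$. Under the identifications $(M \setminus e)^* = M^*/e$ and $(M/e)^* = M^* \setminus e$, cocircuit deletion/contraction in $M$ becomes circuit contraction/deletion in $M^*$, and the fact that $e$ is a loop of $M^*$ iff $e$ is a coloop of $M$ matches the standing hypotheses up exactly. Hence the two cocircuit statements follow from the two already-proven circuit statements applied to $M^*$. The main obstacle is the contraction case for circuits: establishing uniqueness of the lift and eliminating the residual $e$-coordinate via the non-loop assumption are both essential, whereas the deletion case is essentially bookkeeping and the cocircuit cases are formal duality.
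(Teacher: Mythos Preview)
Your argument is correct and follows essentially the same approach as the paper: deletion is handled by the trivial inclusion of signed circuits, contraction by uniquely lifting each $\vec D\in\sigma/e$ to a signed circuit of $M$, observing the lifted combination lies in $\ker(A)$ and is supported on $e$, and using that $e$ is not a loop to force it to vanish; the cocircuit cases are deduced by duality. You are slightly more explicit than the paper about why the lift is unique and about the non-loop step, but the structure and ideas are identical.
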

\begin{proof}
First, we show that $\sigma\setminus e$ is an acyclic signature. For this, notice that any signed circuit of $M\setminus e$ is a signed circuit in $M$.
Take an arbitrary nonnegative linear combination of signed circuits $\sum_{\vec C\in\sigma\setminus e}\lambda_C \vec C$ summing to zero in $M\setminus e$. Then this is a nonnegative linear combination of signed circuits in $M$ giving zero, hence we conclude that $\lambda_C$'s are all 0. This means that $\sigma\setminus e$ is acyclic.

Next, we show that $\sigma/e$ is an acyclic signature. 
Take an arbitrary nonnegative linear combination of signed circuits $\sum_{\vec C\in\sigma/e}\lambda_C \vec C$ summing to zero in $M/e$. Recall that for each $\vec C\in\SignedCircuits{M/e}$, there is a unique signed circuit $\vec {C'}\in\SignedCircuits{M}$ such that $\vec{C'}\setminus e = \vec C$.
Now consider $\sum_{\vec C\in\sigma/e}\lambda_C \vec {C'}$, which must be everywhere zero except possibly over $e$.
But this sum is in $\ker(A)$, and by our standing assumption $e$ is not a loop, the sum must be zero, which in turn implies $\lambda_C$'s are zeros.




   The parallel statements for $\sigma^*$ can be proven by duality.
\end{proof}

Lemma~\ref{lem:acycminorclosed} implies that when the BBY sandpile torsor algorithm is restricted from triangulating signatures to acyclic signatures, it is still a sandpile torsor algorithm. 

\begin{cor}\cite[Conjecture~6.11]{GM}\label{cor:BBYacyclic} 
    The BBY sandpile torsor algorithm (with the auxiliary data of acyclic circuit-cocircuit signatures) is consistent.
\end{cor}
\begin{proof}
    It follows from Lemma~\ref{lem:acyctriang} that this result is an immediate corollary of Theorem~\ref{thm:consistency}. 
\end{proof}


\subsection{The Planar Case} \label{sec:planar}

In this subsection, we will discuss how the BBY algorithm specializes to the context of \emph{plane graphs}. In particular, we show that Theorem~\ref{thm:consistency} gives an alternate proof of~\cite[Theorem~4.6]{GM} which says that the \emph{rotor-routing} sandpile torsor algorithm on plane graphs is consistent.

Recall from Section~\ref{sec:torsordefs} that sandpile torsor algorithms are defined with respect to a minor-closed collection of regular matroids $\mathbf M$, where each $M \in \mathbf M$ is equipped with auxiliary data $\mathbf A(M)$. In the previous two subsections, we let $\mathbf M$ be the set of all regular matroids. In this subsection, $\mathbf M$ is the set of \emph{graphic matroids} of \emph{planar graphs}. For each $M\in \mathbf M$, the auxiliary data $\mathbf A(M)$ is the set of realizations of $\mathbf M$ as explicit graphs embedded in the plane. Another way to say this is that each pair $(M,\mathcal A)$ corresponds to a \emph{plane graph}, which is another name for an embedding of a graph into the plane. 

As discussed in Section~\ref{sec:torsordefs}, we can define sandpile torsor algorithms and consistency in the setting of plane graphs.  In fact, Definition~\ref{def:general_consistency} specializes to~\cite[Definition 4.3]{GM} in the plane graph context. 

\begin{remark}
    In~\cite{GM}, the sandpile group is defined as a quotient group of $\Z^V$ instead of $\Z^E$ (where $V$ is the set of vertices of the graph). When working with regular matroids, there is no well-defined analogue to vertices, so it is necessary to look at edges instead. By~\cite[Proposition 3.2.11]{McDThesis}, the two definitions give canonically isomorphic groups as long as the graph is connected.  
\end{remark}

In \cite[Example~1.1.3]{BBY}, the authors explain how circuit and cocircuit signatures can be extracted from a plane graph. In particular, given a plane graph $G$, we first choose a planar embedding of the dual graph $G^*$. Orienting all of the cycles of $G$ counterclockwise (or all clockwise) gives an acyclic (and therefore triangulating) circuit signature for $G$. We can obtain a triangulating circuit signature for $G^*$ analogously. From here, we get a triangulating cocircuit signature for $G$ from the natural bijection between oriented cycles of $G$ and oriented cuts of $G^*$. 

The previous paragraph shows how any plane graph can be reinterpreted as a regular matroid with a triangulating circuit-cocircuit signature. In particular, the BBY sandpile torsor algorithm induces a sandpile torsor algorithm on plane graphs. It follows from~\cite[Proposition 18]{YCH} that the induced sandpile torsor algorithm is equivalent to the \emph{Bernardi sandpile torsor algorithm}, and does not depend on the embedding of $G^*$. By \cite[Theorem~7.1]{BW_Bernardi}, the \emph{Bernardi sandpile torsor algorithm} is equivalent to the \emph{rotor-routing sandpile torsor algorithm} (when working on plane graphs). Thus, the rotor-routing algorithm is a special case of the BBY algorithm. 

As plane graphs are also minor-closed classes, we conclude that \cite[Theorem 4.6]{GM} is an immediate corollary of Theorem~\ref{thm:consistency}.

\section{Characterizing the Canonical Action of an Arc on $\ccmin$} \label{sec:action}

Throughout this section, we will fix a regular matroid $M$ realized by a totally unimodular matrix $A$, as well as a triangulating circuit-cocircuit signature $(\sigma,\sigma^*)$ of $M$. Our goal of this section is describe the canonical action of an arc acting on a $(\sigma,\sigma^*)$ compatible orientation of $M$ (see Definition~\ref{def:canonical_action_for_representatives}). The main result of this section is Theorem~\ref{thm:generalDescription}, which gives a detailed description of how orientations change when acted on by an arc. For example, this theorem shows that it suffices to flip at most one circuit and at most one cocircuit, as well as the arc itself. Theorem~\ref{thm:generalDescription} will be crucial for proving Theorem~\ref{thm:consistency} in Section~\ref{sec:main}, and may be of independent interest. 


\subsection{
Lemmas on Orientations and Fourientations}


We give a few useful lemmas on how to obtain $\ori O^\circ$ (recall Definition~\ref{def:minimize}) from an orientation $\ori O$. 

We use Lemma~\ref{lem:decompose} to show that one can reach a $(\sigma,\sigma^*)$-compatible orientation in a class by reversing disjoint circuits and cocircuits.
This property has been shown in multiple references implicitly such as the proof of \cite[Theorem~3.3]{GY} or \cite[Section~2.1]{Ding1}, but we give a standalone proof as it will be very important for us. 

\begin{lemma}\label{lem:disjoint}
Let $\ori O$ be an orientation of $M$. Then, there exist $\vec{C_1},\ldots,\vec{C_s}\in\SignedCircuits{M}$ and $\vec{C_1^*},\ldots,\vec{C_t^*}\in\SignedCoCircuits{M}$ (with $s,t$ possibly zero) such that:
\begin{enumerate}
\item\label{it:disj} These signed circuits and cocircuits all have mutually disjoint supports.
\item\label{it:compat} For all $i \in [t]$ and $j \in [s]$, we have $\vec{C_i}\compat \ori O$, $\vec{C^*_j} \compat \ori O$, $-\vec{C_i}\compat \ori O^\circ$, and $-\vec{C_j^*}\compat \ori O^\circ$.
\item We have $\reverse{P}\ori O = \ori O^\circ$, where $P:=(\bigcup_{i=1}^s C_i)\cup(\bigcup_{j=1}^t C^*_j)$.
\item\label{it:not_in_sigma} For all $i \in [t]$ and $j \in [s]$, we have $\vec{C_i}\not\in\sigma$ and $\vec{C_j^*}\not\in\sigma^*$.


\end{enumerate}
\end{lemma}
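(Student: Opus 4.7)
My plan is to realize the ``difference'' between $\ori O$ and $\ori O^\circ$ as a single simple $1$-chain $\vec P$, and then split $\vec P$ using the orthogonal decomposition $\ker(A)\oplus\row(A)$ that is available since $A$ is totally unimodular.

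First I would set $P := \{e \in E : \ori O\oriat{e} \neq \ori O^\circ\oriat{e}\}$ and define $\vec P$ to be the simple $1$-chain with $\vec P\oriat{e} = \ori O\oriat{e}$ for $e \in P$ and $\vec P\oriat{e} = 0$ otherwise. Then $\vec P \compat \ori O$, $-\vec P \compat \ori O^\circ$, and $\reverse{P}{\ori O} = \ori O^\circ$ directly from the construction. Because $\ori O \sim \ori O^\circ$, there is a finite sequence of signed-circuit and signed-cocircuit reversals taking $\ori O$ to $\ori O^\circ$; viewing orientations as $\pm1$-vectors, each such reversal changes the vector by $-2$ times the signed circuit (resp.\ cocircuit) being reversed. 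Summing over the sequence, the total change $-2\vec P$ lies in $2(\Lambda(M)+\Lambda^*(M))$, so $\vec P \in \Lambda(M)+\Lambda^*(M)$. I can therefore write $\vec P = \vec A + \vec B$ with $\vec A \in \Lambda(M)$ and $\vec B \in \Lambda^*(M)$, both integer-valued.

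The technical core is to show $\vec A$ and $\vec B$ are simple with disjoint supports whose union is exactly $P$. Since $\Lambda(M) \subseteq \ker(A)$ and $\Lambda^*(M) \subseteq \row(A)$ are orthogonal in $\mathbb{R}^E$, we have $\vec A \cdot \vec B = 0$, so $\|\vec P\|^2 = \|\vec A\|^2 + \|\vec B\|^2$. Simplicity of $\vec P$ gives $\|\vec P\|^2 = |P|$; integrality of $\vec A,\vec B$ gives $\|\vec A\|^2 \geq |A|$ and $\|\vec B\|^2 \geq |B|$; and $P \subseteq A \cup B$ gives $|A\cup B| \geq |P|$. Chaining these,
\[|P| = \|\vec A\|^2 + \|\vec B\|^2 \geq |A| + |B| \geq |A \cup B| \geq |P|,\]
so every inequality is an equality. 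Hence $\vec A, \vec B$ are simple, their supports are disjoint, and $A \cup B = P$. Since $\vec A + \vec B = \vec P \compat \ori O$ with $A$ and $B$ disjoint, $\vec A$ and $\vec B$ are themselves compatible with $\ori O$.

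Applying Lemma~\ref{lem:decompose}(1) to $\vec A$ and Lemma~\ref{lem:decompose}(2) to $\vec B$ produces disjoint-support signed circuits $\vec C_1,\ldots,\vec C_s$ and disjoint-support signed cocircuits $\vec C_1^*,\ldots,\vec C_t^*$ whose signs match those of $\vec A$, resp.\ $\vec B$, and hence those of $\ori O$. Since $\supp(\vec A)$ and $\supp(\vec B)$ are already disjoint, all of these supports are mutually disjoint, giving (1), the $\ori O$-compatibility halves of (2), and (3). The remaining compatibilities $-\vec C_i \compat \ori O^\circ$ and $-\vec C_j^* \compat \ori O^\circ$ are immediate because each support lies in $P$, and $\ori O^\circ$ is exactly the reversal of $\ori O$ on $P$. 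For (4), if some $\vec C_i \in \sigma$, then $-\vec C_i \compat \ori O^\circ$ together with the $\sigma$-compatibility of $\ori O^\circ$ would force $-\vec C_i \in \sigma$, contradicting that a signature contains exactly one signed circuit per circuit; the cocircuit case is identical. The main obstacle is the norm-comparison argument, which is where the regularity of $M$ (through total unimodularity and the resulting orthogonality) is genuinely used.
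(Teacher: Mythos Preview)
Your argument is correct and takes a genuinely different route from the paper's own proof. The paper first invokes Lemma~\ref{lem:circorcocirc} to partition the ground set into a \emph{totally cyclic} part (where only circuit reversals can act) and an \emph{acyclic} part (where only cocircuit reversals can act), observes that neither kind of reversal disturbs this partition, and then treats the two halves separately: on each half the sum of the reversal steps is a simple $1$-chain in $\ker(A)$ (resp.\ $\row(A)$), which Lemma~\ref{lem:decompose} splits into disjoint signed circuits (resp.\ cocircuits). You instead bypass the combinatorial cyclic/acyclic partition entirely by writing $\vec P=\vec A+\vec B$ with $\vec A\in\Lambda(M)$, $\vec B\in\Lambda^*(M)$ and using the orthogonality of $\ker(A)$ and $\row(A)$ together with the integrality norm bound $\|\vec A\|^2\ge|A|$, $\|\vec B\|^2\ge|B|$ to force $\vec A,\vec B$ simple with disjoint supports filling $P$. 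Both proofs finish with Lemma~\ref{lem:decompose}. Your version is a clean linear-algebraic shortcut that exploits regularity through the orthogonal lattice decomposition and avoids appealing to Lemma~\ref{lem:circorcocirc}; the paper's version is more oriented-matroid-flavored and makes the circuit/cocircuit separation visible at the level of elements rather than lattices. Your derivation of item~(4) from item~(2) and the $(\sigma,\sigma^*)$-compatibility of $\ori O^\circ$ is also spelled out more explicitly than in the paper.
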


\begin{proof}
By Lemma~\ref{lem:circorcocirc}, $\ori O$ can be partitioned into a totally cyclic part where only circuit reversals can happen, and an acyclic part where only cocircuit reversal can happen, while neither reversals alter the partition.
In particular, to prove the lemma, it suffices to prove that any sequence of circuit reversals is equivalent to a sequence of disjoint circuit reversals (and take dual for the cocircuit counterpart).
We say 
that a circuit reversal involves a $\vec C\in\SignedCircuits{M}$ if $C$ is being reversed and $\vec C$ is compatible with the original orientation. 

By comparing $\ori O$ and $\ori O^\circ$, the sum $\vec P$ of all signed circuits involved in the original sequence must be a simple 1-chain (namely, the sum of arcs in $\ori O$ that are flipped in the end), moreover, $\vec P$ is in $\ker(A)$ as every signed circuit is.
By Lemma~\ref{lem:decompose}, $\vec P$ can be decomposed into a sum of signed circuits $\vec{C_1},\ldots,\vec{C_s}$ of disjoint support, and from the description of $\vec P$, each $\vec{C_i}$ is compatible with $\ori O$.
\end{proof}




We list a few observations that are straightforward from definition, but also frequently useful. 

\begin{lemma}\label{lem:Fouriprops} 
    Let $\fouri {F}_1, \fouri {F}_2 \in \Fourientations{M}$ and $\vec P\in \Z^E$. Then, the following properties hold. 
    \begin{enumerate}
    \item $\vec P \compat \fouri{F}_1$ if and only if $-\vec P \compat -\fouri{F}_1$.
    \item $\vec P \compat (\fouri{F}_1 \cap \fouri{F}_2)$ if and only if both $\vec P \compat \fouri {F}_1$ and $\vec P \compat \fouri {F}_2$.
    \item $\reverse{P}{(\fouri F_1 \cup \fouri F_2)} = \reverse{P}{\fouri F_1} \cup \reverse{P}{\fouri F_2}$.
    \item $\reverse{P}{(\fouri F_1 \cap \fouri F_2)} = \reverse{P}{\fouri F_1} \cap \reverse{P}{\fouri F_2}$.
    \end{enumerate}
\end{lemma}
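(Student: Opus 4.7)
The plan for all four identities is to reduce each assertion to a pointwise verification, since compatibility (Definition~\ref{def:fouricompat}) and each of $\cap$, $\cup$, $-$, and $\reverse{P}{\cdot}$ is defined coordinate-by-coordinate. After fixing a single $x \in E$, every statement becomes a finite case check governed by the four possible values $\{\emptyset, -, +, \pm\}$ of $\fouri{F}_i\oriat{x}$ and the sign of $\vec P\oriat{x}$.

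For part (1), I would note that the negation operation on fourientations is the coordinate-wise involution on $\{\emptyset, -, +, \pm\}$ that fixes $\emptyset$ and $\pm$ and swaps $+$ with $-$. Inspecting Definition~\ref{def:fouricompat}, the clauses ``$\vec P\oriat{x} = 0$'' and ``$\fouri{F}_1\oriat{x} = \pm$'' are each invariant under the simultaneous negation $(\vec P, \fouri{F}_1) \mapsto (-\vec P, -\fouri{F}_1)$, and the two sign-matching clauses are interchanged by it; hence compatibility at $x$ is preserved. For part (2), I would rewrite the condition at $x$ in the equivalent form ``either $\vec P\oriat{x} = 0$, or $\sgn(\vec P\oriat{x}) \in \fouri{F}\oriat{x}$'', regarding $\fouri{F}\oriat{x}$ as a subset of $\{-,+\}$. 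Since $(\fouri{F}_1 \cap \fouri{F}_2)\oriat{x}$ is the set-theoretic intersection by definition, membership in it is equivalent to membership in both $\fouri{F}_i\oriat{x}$, giving the desired equivalence.

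For parts (3) and (4), the plan is to observe that when $x \notin P$ both sides are manifestly equal, and when $x \in P$ the local reversal at $x$ is precisely the lattice automorphism of the power set $\mathcal{P}(\{-,+\})$ induced by swapping $-$ and $+$ on the underlying two-element set. Because this automorphism commutes with set-union and set-intersection, reversal commutes with the pointwise operations $\cup$ and $\cap$ on fourientations, which yields (3) and (4). I would sanity-check the subtlest case ($\fouri{F}_1\oriat{x} = +$ and $\fouri{F}_2\oriat{x} = -$, with $x \in P$): the intersection is $\emptyset$ and the union is $\pm$, both fixed by reversal, matching the other side where reversal first sends the two entries to $-$ and $+$ whose intersection and union are again $\emptyset$ and $\pm$.

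The only obstacle is clerical: one must carefully track which values in $\{\emptyset, -, +, \pm\}$ are fixed or swapped by negation and by reversal, and how these interact with $\cap$ and $\cup$. Once the lattice-automorphism observation in the previous paragraph is made, all four parts collapse to short, mechanical pointwise checks, which explains why the authors describe this lemma as straightforward from the definitions.
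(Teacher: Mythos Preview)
Your proposal is correct and matches the paper's treatment: the paper gives no proof at all, simply introducing the lemma as ``a few observations that are straightforward from definition,'' and your pointwise verification is exactly the routine check the authors have in mind.
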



Next, we give a more technical lemma involving the intersection of a circuit and cocircuit.

\begin{lemma}\label{lem:ori_intersection}
Suppose that $\vec e$ and $\vec f$ are distinct arcs, $\vec C$ is a signed circuit, $\vec {C^*}$ is a signed cocircuit, and $\ori O$ is an orientation. Further suppose that
\[ e\in C\cap C^*, \hspace{ .5 cm}(\vec C\setminus f) \compat \ori O, \hspace{.5 cm}\text{ and }\hspace{.5 cm}(\vec {C^*} \setminus f) \compat \ori O.\]
Then, we have $C \cap C^* = \{e,f\}$. 
\end{lemma}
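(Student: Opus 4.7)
The plan is to leverage the orthogonality relation between a signed circuit and a signed cocircuit encoded in Lemma~\ref{lem:ccint}. First I would observe that for every $x \in (C \cap C^*) \setminus \{f\}$, the hypotheses $(\vec C \setminus f)\compat \ori O$ and $(\vec{C^*}\setminus f)\compat \ori O$ force $\vec C\oriat{x}$ and $\vec{C^*}\oriat{x}$ to share the sign of $\ori O\oriat{x}$, and in particular to agree with each other. Thus every element of $C \cap C^*$ besides possibly $f$ is an ``agreeing'' index in the terminology of Lemma~\ref{lem:ccint}.

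Next I would use Lemma~\ref{lem:ccint}, which asserts that the number of agreements and disagreements between $\vec C$ and $\vec{C^*}$ on $C \cap C^*$ are equal. Since the argument above shows there can be at most one disagreement (namely, possibly at $f$), there must be at most one agreement as well. Combining these, $|C \cap C^*| \leq 2$.

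I would then rule out the two degenerate cases. If $f \notin C \cap C^*$, every element of $C \cap C^*$ is an agreement, so the equal-count condition forces $C \cap C^* = \emptyset$, contradicting $e \in C \cap C^*$. Hence $f \in C \cap C^*$. Similarly, if the sign of $\vec C$ and $\vec{C^*}$ happened to agree at $f$ too, then $C \cap C^*$ would consist entirely of agreements, again forcing $C \cap C^* = \emptyset$ and contradicting $e \in C \cap C^*$. So $f$ contributes the unique disagreement, which by Lemma~\ref{lem:ccint} must be matched by exactly one agreement, and that agreement has to be $e$ (since $e \neq f$ lies in $C \cap C^*$ and must be an agreement by the first step). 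Therefore $C \cap C^* = \{e, f\}$.

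The whole argument is essentially a bookkeeping exercise once Lemma~\ref{lem:ccint} is invoked, so I do not anticipate a serious obstacle; the only subtle point is remembering to separately handle the case $f \notin C \cap C^*$ before concluding that $f$ must be the disagreeing element, and to use the distinctness of $e$ and $f$ to identify the unique agreeing element.
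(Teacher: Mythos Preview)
Your proposal is correct and follows essentially the same approach as the paper: both arguments observe that compatibility with $\ori O$ forces $\vec C$ and $\vec{C^*}$ to agree at every element of $(C\cap C^*)\setminus f$, and then invoke the equal-count orthogonality of Lemma~\ref{lem:ccint} to pin down $C\cap C^*=\{e,f\}$. The paper organizes the deduction slightly differently (it first notes $e$ is an agreement, deduces a disagreement must exist and hence be $f$, then applies Lemma~\ref{lem:ccint} once more), whereas you bound $|C\cap C^*|\le 2$ first and then eliminate degenerate cases; but these are minor presentational differences, not a genuinely distinct route.
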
 

\begin{proof}

Since $\vec C\setminus f \compat \ori O$ and $\vec {C^*} \setminus f \compat \ori O$, we must have $\vec C\oriat e = \vec {C^*} \oriat e$. By Lemma~\ref{lem:ccint}, this means that there must be some $x \in C \cap C^*$ such that $\vec C\oriat x \neq \vec {C^*} \oriat x$. However, since $\vec C\setminus f \compat \ori O$ and $\vec {C^*} \setminus f \compat \ori O$, the only possibility is $x=f$. This implies that $f \in C \cap C^*$ and we can apply Lemma~\ref{lem:ccint} again to conclude that $C \cap C^* = \{e,f\}$. 
\end{proof}


\begin{lemma}\label{lem:intersection}
Suppose that $\vec e$ and $\vec f$ are distinct arcs, $\vec C$ is a signed circuit, $\vec {C^*}$ is a signed cocircuit, and $\fouri F$ is a fourientation. Further suppose that
\[ e \in C \cap {C^*}, \hspace{ .5 cm}(\vec C\setminus f) \compat \fouri F, \hspace{.5 cm}\text{ and }\hspace{.5 cm}(\vec {C^*} \setminus f) \compat (-\fouri F)^c.\]
Then, it follows that $C \cap C^* = \{e,f\}$. 
\end{lemma}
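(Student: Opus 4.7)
The plan is to parallel the proof of the orientation version (Lemma~\ref{lem:ori_intersection}), with the main extra work being to unpack what the operation $(-\fouri F)^c$ does and to show that, at every element of $(C\cap C^*)\setminus f$, the fourientation $\fouri F$ is effectively pinned down to a genuine orientation that simultaneously certifies compatibility of both $\vec C\setminus f$ and $\vec{C^*}\setminus f$. Once that is established, the agreement/disagreement count from Lemma~\ref{lem:ccint} finishes the argument exactly as before.

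First I would fix an arbitrary $x\in(C\cap C^*)\setminus f$ and do a four-case analysis on $\fouri F\oriat{x}\in\{+,-,\pm,\emptyset\}$. Unpacking Definition~\ref{def:fouricompat}, the hypothesis $\vec C\setminus f\compat\fouri F$ requires $\sgn(\vec C\oriat x)$ to lie in $\fouri F\oriat x$, while $\vec{C^*}\setminus f\compat(-\fouri F)^c$ requires $\sgn(\vec{C^*}\oriat x)$ to lie in $(-\fouri F)^c\oriat x$. Since $x\in C$ and $x\in C^*$, both $\vec C\oriat x$ and $\vec{C^*}\oriat x$ are nonzero, so neither sign-set may be $\emptyset$. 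If $\fouri F\oriat x=\emptyset$, the first condition fails; if $\fouri F\oriat x=\pm$, then $(-\fouri F)^c\oriat x=\emptyset$ and the second condition fails. Hence $\fouri F\oriat x\in\{+,-\}$, and in both remaining cases one checks directly from the definition of $-\fouri F$ and $(\cdot)^c$ that $(-\fouri F)^c\oriat x=\fouri F\oriat x$. Therefore $\sgn(\vec C\oriat x)=\sgn(\vec{C^*}\oriat x)$, i.e.\ $\vec C\oriat x\cdot\vec{C^*}\oriat x>0$.

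Next I would invoke Lemma~\ref{lem:ccint}: the number of $x\in C\cap C^*$ on which the signs of $\vec C$ and $\vec{C^*}$ agree equals the number on which they disagree. From the previous step, every element of $(C\cap C^*)\setminus f$ is an agreement (in particular $e$ is). Consequently $C\cap C^*$ must also contain at least one disagreement, which can only occur at $f$; so $f\in C\cap C^*$, and $f$ is the unique disagreement point. Setting $k:=|(C\cap C^*)\setminus f|$, agreements equal $k$ and disagreements equal $1$, forcing $k=1$. Since $e$ is one such element, $(C\cap C^*)\setminus f=\{e\}$, giving $C\cap C^*=\{e,f\}$ as claimed.

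The only real obstacle is the case analysis in the first step; it is where the perhaps-surprising combination $\fouri F$ on the circuit side and $(-\fouri F)^c$ on the cocircuit side earns its keep, since this pairing is precisely what forces $\fouri F$ to take a genuine sign ($+$ or $-$) at each relevant $x$, and to transmit that sign identically to both $\vec C$ and $\vec{C^*}$. After that point the argument is essentially the same bookkeeping with Lemma~\ref{lem:ccint} that drove Lemma~\ref{lem:ori_intersection}, so no additional difficulty should arise.
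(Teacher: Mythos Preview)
Your proof is correct and follows essentially the same approach as the paper. The paper's own proof is a two-sentence sketch: it observes that the compatibility conditions force $\fouri F\oriat{x}\in\{-,+\}$ for every $x\in(C\cap C^*)\setminus f$ (since otherwise one of $\fouri F\oriat{x}$ or $(-\fouri F)^c\oriat{x}$ is $\emptyset$), and then defers to the argument of Lemma~\ref{lem:ori_intersection}; your write-up simply unpacks both of these steps in full detail.
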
 

\begin{proof}
By the compatibility conditions, we must have $\fouri{F}\oriat{x}\in\{-,+\}$ for every $x\in (C\cap C^*)\setminus f$ or else either $\fouri{F}\oriat{x}$ or $(-\fouri{F})^c\oriat{x}$ is $\emptyset$. From here, the proof is analogous to the proof of Lemma~\ref{lem:ori_intersection}.  
\end{proof}

\subsection{
Characterization of the Canonical Action}


The following straightforward lemma elaborates some details of the canonical action of $S(M)$ on $\ccmin(M)$.

\begin{lemma}\label{lem:swapf} Let $\ori O\in\ccmin(M)$ and $\vec f$ be an arc. Exactly one of the following must hold. 

\begin{enumerate}
    \item We have $\vec f \not \compat \ori O$ and \[[\vec f] \cdot \ori O = (\reverse{f}{\ori O})^\circ.\] 
    \item We have $\vec f \sim \ori O$ and there is a signed circuit $\vec C$ such that $\vec f \in \vec C$ and $\vec C \compat \ori O$. For any such $\vec C$, we have
    \[[\vec f] \cdot \ori O = (\reverse{C\setminus f}{\ori O})^\circ.\] 
    \item We have $\vec f \sim \ori O$ and there is a signed cocircuit $\vec {C^*}$ such that $\vec f \in \vec {C^*}$ and $\vec {C^*} \compat \ori O$. For any such $\vec {C^*}$, we have
    \[[\vec f] \cdot \ori O = (\reverse{C^*\setminus f}{\ori O})^\circ.\] 
\end{enumerate}

\end{lemma}


Note that in (2) and (3) the choice of $C$ or $C^*$ is not necessarily unique.
\begin{proof}
If $\vec f \not\compat \ori O$, then we can immediately apply Definition~\ref{def:groupaction} to obtain the desired conclusion. Otherwise, it follows directly by Lemma~\ref{lem:circorcocirc} that there exists a signed circuit $\vec C$ or a signed cocircuit $\vec C^*$ which satisfies the conditions of the lemma. Reversing this circuit or cocircuit gives an orientation in $[\ori O]$ that is not compatible with $\vec f$. Thus, we can again apply Definition~\ref{def:groupaction} to complete the proof. 
\end{proof}

Building off Lemma~\ref{lem:swapf}, we now give a general description of the canonical action of an arc on the set $\ccmin(M)$. 

\begin{thm} \label{thm:generalDescription} 
Let $\vec f$ be an arc and $\ori O_1, \ori O_2 \in \ccmin(M)$ such that $[\vec f] \cdot \ori O_1 = \ori O_2$. Then $\ori O_1$ can be transformed to $\ori O_2$ by the following three step process.
\begin{enumerate}
    \item Reverse at most one signed circuit or cocircuit containing $f$ that is compatible with $\ori O_1$. 
    \item Reverse $f$. 
    \item Reverse at most one signed circuit or cocircuit containing $f$ that is compatible with the new orientation.  
\end{enumerate}
Furthermore, the following conditions hold. 
\renewcommand{\theenumi}{\alph{enumi}}
\begin{enumerate}
    \item A reversal occurs during step (1) if and only if $\vec f \compat \ori O_1$. 
    \item A reversal occurs during step (3) if and only if $-\vec f \compat \ori O_2$. 
    \item If reversals occur during both step (1) and step (3), one of these is a circuit reversal while the other is a cocircuit reversal.  
    \item If reversals occur both in step (1) and step (3), the reversed circuit is $\vec C$, and the reversed cocircuit is $\vec C^*$, then we have $C\cap C^* = \{f,g\}$ for some $g \in E$. Moreover, $\ori O_2 = \reverse{(C\cup C^*)\setminus g}\ori{O}_1$. 
\end{enumerate}
\end{thm} 
\begin{proof}\mbox

\begin{enumerate}[label = \textbf{Case \arabic*},series = cases,leftmargin = *]

\item $\vec f \not\compat \ori O_1$:\\
\end{enumerate}

    It follows from Lemma~\ref{lem:swapf}(1) that $\ori O_2 = (\reverse{f}{\ori{O}_1})^\circ$. From here, it follows from Lemma~\ref{lem:disjoint} that $\ori O_2$ is obtained from $\reverse{f}{\ori{O}_1}$ by reversing a (possibly empty) collection of signed circuits and cocircuits with mutually disjoint support. Furthermore, by Lemma~\ref{lem:disjoint}(\ref{it:compat},\ref{it:not_in_sigma}), each of these signed circuits and cocircuits must be compatible with $\reverse{f}{\ori{O}_1}$, but are not in $\sigma$ or $\sigma^*$. All of these signed circuits and cocircuits must contain $f$ or else they would be compatible with $\ori{O}_1$, contradicting the fact that $\ori{O}_1$ is $(\sigma,\sigma^*)$-compatible. Furthermore, since the supports are mutually disjoint, the total number of signed circuits and cocircuits is at most 1. This proves the first half of the theorem for this case (the part before the four additional conditions). 

With respect to the theorem, we have shown that there is never a reversal in step (1) of the process, which proves condition (a) for this case. It also vacuously proves conditions (c) and (d). Finally, condition (b) holds because $f$ is reversed twice is there is a reversal at step (3), but only once if there is not. The result then follows from the fact that the final orientation of $f$ is $\ori O_2\oriat{f}$ by definition.\\ 


\begin{enumerate}[resume* = cases]
\item  $\vec f \compat \ori O_1$:\\ 
\end{enumerate}

There are two possibilities for $\ori O_2$ corresponding to the second and third case of Lemma \ref{lem:swapf}.\\

\begin{enumerate}[label = \textbf{Case 2.\arabic*},series = subcases,leftmargin = *, labelindent = \parindent]    
\item Lemma \ref{lem:swapf}(2) applies:\\
\end{enumerate} 

Lemma \ref{lem:swapf}(2) says that there exists some signed circuit $\vec C$ such that $\vec f \in  \vec C$, $\vec C \compat \ori O_1$ and $\ori O_2 = (\reverse{C\setminus f}{\ori O_1})^\circ$. Given a circuit $\vec C$ which satisfies these properties, consider the set
\[\flip(C):=\{e\in C \mid \reverse{C\setminus f}{\ori O_1}\oriat{e} \neq \ori{O}_2\oriat{e} \}.\]
In other words, $\flip(C)$ consists of the $e \in C$ whose direction has to be reversed in $\reverse{C\setminus f}{\ori O_1}$ to get $\ori O_2$. Choose a $C$ such that $\flip(C)$ is minimal among the circuits containing $f$ that are compatible with $\vec C$. In particular, we choose $\vec C$ such that there is no signed circuit $\vec C'$ satisfying $f \in C'$, $\vec {C'} \compat \ori O_1$, and $\flip(C') \subseteq \flip (C)$. 

By Lemma~\ref{lem:disjoint}, it is possible to go from $\reverse{C\setminus f}{\ori O_1}$ to $\ori O_2$ by reversing a disjoint collection of circuits and cocircuits. Suppose for the sake of contradiction that this involves swapping at least one circuit, and call this circuit $\vec{D}$.  

Take the 1-chain $\vec C + \vec D$ which is in $\ker(A)$ since both $\vec C$ and $\vec D$ are. Furthermore, 
$\ori O_1 \oriat {f} = \reverse{C\setminus f}{\ori O_1}\oriat {f}$. Since $f \in C$, $\vec {C} \compat \ori O_1$, and $\vec {D} \compat \reverse{C\setminus f}{\ori O_1}$, it follows that $f \in \supp(\vec C + \vec {D})$. 

Write $\vec C + \vec D$ as a sum of signed circuits as indicated by Lemma \ref{lem:decompose}. Take a signed circuit $\vec C'$ of the sum whose support contains $f$ (There is at least one such circuit). We claim that we must have $\vec {C'} \compat \ori O_1$. This follows from the fact that the support of $\vec {C'}$ must be a subset of the support of $\vec C + \vec {D}$ as well as the condition that $\vec C'\oriat x$ must have the same sign as $(\vec C + \vec {D})\oriat x$ for all $x \in C'$. In particular, $\vec C'$ is compatible with $\ori O_1$. 

We have therefore shown that $\vec{C'}$ is a signed circuit such that $\vec f \in \vec {C'}$ and $\vec C' \compat \ori O_1$. Next, we will show that $\flip(C') \subsetneq \flip (C)$, contradicting the minimality condition on $C$.

For $x \in C' \setminus C$, we must have $x \in D$. Since $D$ is disjoint from all other circuits and cocircuits reversed to go from $\reverse{C\setminus f}{\ori O_1}$ to $\ori O_2$, it follows that $\ori O_2 \oriat{x} \not= \reverse{C\setminus f}{\ori O_1}\oriat{x} = \ori O_1 \oriat{x}$ where in the last equality we use $x\notin C$. 
Furthermore, since $x \in C'$, we must have $\ori O_1 \oriat{x} \not= \reverse{C'\setminus f}{\ori O_1}\oriat{x}$. Combining these two inequalities implies that $\ori O_2 \oriat{x} = \reverse{C'\setminus f}{\ori O_1}\oriat{x}$ and $x \not\in \flip(C')$. 

Next, consider $x \in (C' \cap C)$. Here, it is immediate 
that $\reverse{C'\setminus f}{\ori O_1}\oriat{x} = \reverse{C\setminus f}{\ori O_1}\oriat{x}$. In particular, $x \in \flip(C')$ if and only if $x \in \flip (C)$ for this case. 

Because $C' = (C' \setminus C) \cup (C' \cap C)$, we have demonstrated that $\flip(C') \subseteq \flip (C)$. The last thing that we need to prove our claim is that $\flip(C') \not= \flip(C)$. By Lemma~\ref{lem:disjoint}, we know that $\vec {D} \compat \reverse{C\setminus f}{\ori O_1}$ but $\vec{D} \not\in \sigma$. If $(D \cap C)\setminus f = \emptyset$, then 
we have $D\compat \ori O_1$. However, since $\ori O_1 \in \ccmin(M)$, this contradicts the fact that $\vec{D} \not\in \sigma$. In particular, $(D \cap C)\setminus f \neq \emptyset$. Fix an element $g \in (D \cap C) \setminus f$. Since $\vec {D} \compat \reverse{C\setminus f} {\ori O_1}$, we must have $\vec{C} \oriat{g} = (-\vec D)\oriat{g}$. This means that $g \not\in C'$, and thus $g \not\in \flip(C')$. However, since $g$ is flipped twice to go from $\ori O_1$ to $\ori O_2$, we also have $g \in \flip(C)$. This completes the proof that $\flip(C')\subsetneq \flip(C)$, and shows by contradiction that no circuits are reversed when going from $\reverse{C\setminus f}{\ori O_1}$ to $\ori O_2$.

Finally, we show that at most one cocircuit reversal is required (and this reversal occurs if and only if $-\vec f \compat \ori O_2$). Consider any signed cocircuit $\vec {C^*}$ that is compatible with $\reverse{C\setminus f}{\ori O_1}$, but is not in $\sigma^*$. If $C^* \cap C = \emptyset$, then we also have $\vec C^* \compat \ori O_1$. 
However, this contradicts the condition that $\ori O_1$ is $(\sigma, \sigma^*)$-compatible. Thus, $C^* \cap C$ must be nonempty. By applying Lemma~\ref{lem:ori_intersection} to the orientation $\reverse{C \setminus f}\ori{O}_1$, the circuit $-\vec{C}$, and the cocircuit $\vec{C}^*$, we have $f \in C^*$. 

Let $\vec{C^*_1},\ldots,\vec{C^*_t}$ be the cocircuits that have to be reversed in $\reverse{C\setminus f}{\ori O_1}$ to get $\ori O_2$. We have shown that for every $i$, the signed cocircuit $\vec C^*_i$ must contain $f$. However, these signed cocircuits must all be disjoint, so there can be at most one of them. In other words, $t \le 1$. 

If $t = 0$, then we have $\ori O_1\oriat {f} = \reverse{C\setminus f}{\ori O_1}\oriat{f} = \ori O_2 \oriat{f}$ However, if $t = 1$, we have $\ori O_1\oriat {f} = \reverse{C\setminus f}{\ori O_1}\oriat{f} = -\ori O_2 \oriat{f}$. In particular, this final reversal occurs if and only if $- \vec f \compat \ori O_2$.

This proves $(a),(b)$ and $(c)$ for this case. Let us also prove $(d)$. 
We have already seen that we must have $f \in C \cap C^*$. Since $\vec C \compat \ori O_1$ and $\vec {C^*} \compat \ori O_2$, we have that $(-\vec C \setminus f)$ and $(-\vec C^* \setminus f)$ are both compatible with $\reverse{C} \ori O_1$. By Lemma~\ref{lem:ccint}, there must be some $g \in (C \cap C^*)\setminus f$, and this $g$ must be unique by Lemma~\ref{lem:ori_intersection}. The result $\ori O_2 = \reverse{(C\cup C^*)\setminus g}\ori{O}_1$ follows immediately.\\

\begin{enumerate}[resume* = subcases]
\item Lemma \ref{lem:swapf}(3) applies:\\
\end{enumerate}

This case is completely analogous to the previous one. The only difference is that we swap the role of circuits and cocircuits throughout. 

\end{proof}

\section{A Proof of Consistency} \label{sec:main}

In this section, we prove Theorem~\ref{thm:consistency}, which is the main result of the paper. Let us begin by restating the result. 

\setcounter{thmx}{0}
\begin{thmx}
    Let $M$ be a regular matroid and $(\sigma,\sigma^*)$ be a triangulating circuit-cocircuit signature. Suppose that $\vec f$ is an arc and $B_1,B_2 \in \mathbf B(M)$ such that
    \[[\vec f] \cdot \BBY_{(M,\sigma,\sigma^*)}(B_1)=\BBY_{(M,\sigma,\sigma^*)}(B_2).\] Then, the following 3 properties must hold. 
\begin{enumerate}
    \item For any $e \in (B_1^c \cap B_2^c)\setminus f$, we have
    \[ [\vec f] \cdot \BBY_{(M\setminus e,\sigma\setminus e,\sigma^*\setminus e)}(B_1)=\BBY_{(M\setminus e,\sigma\setminus e,\sigma^*\setminus e)}(B_2).\]
    \item For any $e \in (B_1 \cap B_2)\setminus f$, we have
    \[ [\vec f] \cdot \BBY_{(M/ e,\sigma/ e,\sigma^*/ e)}(B_1\setminus e)=\BBY_{(M/ e,\sigma/ e,\sigma^*/ e)}(B_2\setminus e).\]
    \item For any $e\in E$ that is in a different connected component of $M$ than $f$, we have 
    \[e \in B_1 \iff e \in B_2.\]
\end{enumerate}
\end{thmx}

Most of our effort will go towards proving Theorem~\ref{thm:consistency}(1), which first requires a few technical lemmas. After this, we prove part (2) of the theorem by duality and then part (3) by a simple argument.  

However, before proving Theorem~\ref{thm:consistency}(1), we will focus a subsection on proving a few technical lemmas, namely Lemmas~\ref{lem:oldcase} and~\ref{lem:newcase}.

\subsection{Proof of some Technical Lemmas}\label{sec:technical}

In this subsection, we take a close look at fourientations induced by $B_1$ and $B_2$. For better readability, let us introduce some simplified notations. For all equations, we have $i \in \{1,2\}$. 
\begin{equation}\label{eq:notations}
\fouri B_i := \fouri F(B_i,\sigma) \hspace{1.5 cm}\fouri B_i^* := \fouri F(B_i,\sigma^*) 
\end{equation}
\begin{equation}\label{eq:orientation_names}
\ori O_i:=\BBY_{(M,\sigma,\sigma^*)}(B_i)  =  \fouri B_i \cap \fouri B_i^*
\end{equation}


We also define a pair of fourientations, which will be used throughout the proof. 
\begin{equation}\label{eq:F_and_Fstar_def}
\begin{split}
\text{Let $\fouri{F}:=\fouri{B}_1 \cap -\fouri{B}_2$ and $\fouri{F^*}:=(\fouri{B}_1^* \cap -\fouri{B}_2^*)^c$.} 
\end{split}
\end{equation}

Throughout this subsection, we will assume that we have a fixed pair of bases and a fixed triangulating circuit-cocircuit signature so that~\eqref{eq:notations},~\eqref{eq:orientation_names}, and~\eqref{eq:F_and_Fstar_def} can all be applied. 

After unraveling what the notations mean, we obtain Table~\ref{table:F_andF*}, which shows how $\fouri{F}$ and $\fouri{F^*}$ relate to $\ori O_1$ and $\ori O_2$.

\begin{remark}\label{rem:intuition} 
Let us remark on the intuitive meanings of $\fouri F$ and $\fouri F^*$. The fourientations $\fouri B_1$ and $\fouri B_2$ encode information about $\ori O_1$ and $\ori O_2$ based on the circuit signature $\sigma$. It follows that $\fouri F$ encodes partial information about which edges switch when going from $\ori O_1$ to $\ori O_2$. More precisely, if $\fouri F\oriat{x}=\emptyset$, then we know that $x$ must reverse when going from $\ori O_1$ to $\ori O_2$. If $\fouri F\oriat{x}=+$ (resp. $\fouri F\oriat{x}=-$), then $x$ might or might not be reversed, but if it is reversed, then it must be true that $\ori O_1\oriat{x}=+$ (resp. $\ori O_1\oriat{x}=-$). The situation is analogous for the fourientation $-(\fouri F^*)^c$. The difference is that this fourientation encodes reversal information based on the cocircuit signature $\sigma^*$. 
\end{remark}


\begin{table}[h!]
\centering
\bgroup
\def\arraystretch{2}
\begin{tabular}{ |c||c|c|c|c|c|c| }

\hline
 & $\fouri{B}_1\oriat{x}$ & $\fouri{B}^*_1\oriat{x}$ & $\fouri{B}_2\oriat{x}$ & $\fouri{B}^*_2\oriat{x}$ & $\fouri F\oriat{x}$ & $\fouri F^*\oriat{x}$\\
\hline
\hline
$x \in B_1 \cap B_2$   & $\pm$ & $\ori{O}_1\oriat{x}$   & $ \pm$ & $\ori{O}_2\oriat{x}$ & $\pm$ & $(-\ori O_1\oriat x) \cup \ori O_2\oriat x$ \\
\hline
$x \in B_1 \setminus B_2$ &$\pm$  & $\ori{O}_1\oriat{x}$  &  $\ori{O}_2\oriat{x}$ & $ \pm$    & $-\ori O_2\oriat x$ & $-\ori O_1\oriat x$\\
\hline
$x \in B_2 \setminus B_1$ & $\ori{O}_1\oriat{x}$ & $\pm$    & $\pm$ & $\ori{O}_2\oriat{x}$ & $\ori O_1\oriat x$ & $\ori O_2\oriat x$\\
\hline
$x \in B_1^c \cap B_2^c$ &$\ori{O}_1\oriat{x}$  & $\pm$  &  $\ori{O}_2\oriat{x}$ & $ \pm$    & $\ori O_1\oriat x \cap (-\ori O_2\oriat{x})$ & $\emptyset$\\
\hline
\end{tabular}
\egroup
\caption{For any $x \in E$, this chart shows the value of $\fouri{B}_1\oriat{x}$, $\fouri{B}^*_1\oriat{x}$, $\fouri{B}_2\oriat{x}$, $\fouri{B}^*_2\oriat{x}$, $\fouri F\oriat{x}$, and $\fouri F^*\oriat{x}$ in terms of $\ori O_1\oriat{x}$ and $\ori O_2\oriat{x}$. }\label{table:F_andF*}

\end{table}


Some useful implications of Table~\ref{table:F_andF*} are given by the following lemma. Note that properties (1) and (3) are also proven in \cite[Lemma 2.8]{Ding2}.

\begin{lemma}\label{lem:info in the table}
For $x \in E$, the following properties hold. 
\begin{enumerate}
    \item If $\ori{ O}_1\oriat{x}=\ori{ O}_2\oriat{x}$, then $\fouri{F}\oriat{x}=\fouri{F^*}\oriat{x}$. 
    \item If $\ori{ O}_1\oriat{x}=\ori{ O}_2\oriat{x}$ and $x\in B_1^c \cap B_2^c$, then $\fouri{F}\oriat{x}=\fouri{F^*}\oriat{x}=\emptyset$. 
    
    \item If $\ori{ O}_1\oriat{x}\neq\ori{ O}_2\oriat{x}$, then $\ori{O}_1\oriat{x}\subseteq\fouri{F}\oriat{x}$ and $\fouri{F^*}\oriat{x}\subseteq\ori{O}_2\oriat{x}$.
     
    \item  If $\ori{ O}_1\oriat{x}\neq\ori{ O}_2\oriat{x}$ and $x\in B_1^c \cap B_2^c$, then $\fouri{F}\oriat{x}=\ori{O}_1\oriat{x}$ and $\fouri{F^*}\oriat{x}=\emptyset$. 
\end{enumerate}
\end{lemma}

Our next lemma is immediate from Definition~\ref{def:triangulating}, but is important enough to be worth restating. 

\begin{lemma}\label{lem:keylemma} Using the notations of \eqref{eq:notations} and \eqref{eq:F_and_Fstar_def}, the following properties must hold. 
    \begin{enumerate}
        \item The fourientation $\fouri F$ is not compatible with any $\vec C \in \SignedCircuits M$. 
        \item The fourientation $-(\fouri F^*)^c$ is not compatible with any $\vec {C^*} \in \SignedCoCircuits M$. 
    \end{enumerate} 
\end{lemma} 

Now, we are ready to prove Lemma~\ref{lem:oldcase}, the first of the technical lemmas that are necessary for the proof of Theorem~\ref{thm:consistency}(1).

\begin{figure}[t]
            \centering

    \begin{tikzpicture}[scale=0.8]
    
    \tikzstyle{o}=[circle,fill,scale=.3,draw]
    \begin{scope}[shift={(-5,0)}]
	\node at (1,2.8) {$\ori O_1$};
	\node [o] (5) at (-0.5,0.8) {};
    \node [o] (1) at (0.4,-0.5) {};
	\node [o] (2) at (2,0) {};
	\node [o] (3) at (2,2) {};
	\node [o] (4) at (0.2,2.2) {};
    \draw [thick,->,>=stealth',bend left=10] (2) to (1);
	\draw [thick,->,>=stealth',bend left=10] (1) to (5);
	\draw [thick,->,>=stealth',bend left=10] (5) to (4);
	\draw [thick,->,>=stealth',bend left=10] (4) to (3);
    \draw [thick,->,>=stealth'] (2) to node[fill = white,inner sep=1pt]{\footnotesize $-\vec f$} (3);
    \end{scope}

    \begin{scope}[shift={(-1,0)}]
	\node at (1,2.8) {$\reverse{f} \ori O_1$};
	\node [o] (5) at (-0.5,0.8) {};
    \node [o] (1) at (0.4,-0.5) {};
	\node [o] (2) at (2,0) {};
	\node [o] (3) at (2,2) {};
	\node [o] (4) at (0.2,2.2) {};
    \draw [thick,->,>=stealth',bend left=10] (2) to (1);
	\draw [thick,->,>=stealth',bend left=10] (1) to (5);
	\draw [thick,->,>=stealth',bend left=10] (5) to (4);
	\draw [thick,->,>=stealth',bend left=10] (4) to (3);
	\draw [thick,<-,>=stealth'] (2) to (3);
    \node[] at (2.5, 1) {\footnotesize $\vec f$};
    \node[] at (1, 1) {\small $\vec C$};
    \end{scope}
    
    \begin{scope}[shift={(5,0)}]
	\node at (1,2.8) {$\ori O_2 = \reverse{C\setminus f} \ori O_1$};
    \node [o] (5) at (-0.5,0.8) {};
    \node [o] (1) at (0.4,-0.5) {};
	\node [o] (2) at (2,0) {};
	\node [o] (3) at (2,2) {};
	\node [o] (4) at (0.2,2.2) {};
    \draw [thick,<-,>=stealth',bend left=10] (2) to (1);
	\draw [thick,<-,>=stealth',bend left=10] (1) to (5);
	\draw [thick,<-,>=stealth',bend left=10] (5) to (4);
	\draw [thick,<-,>=stealth',bend left=10] (4) to (3);
	\draw [thick,->,>=stealth'] (2) to node[fill = white,inner sep=1pt]{\footnotesize $-\vec f$} (3);
    \end{scope}

    \tikzstyle{o}=[circle,fill,scale=.3,draw]
    \begin{scope}[shift={(-5,-4)}]
	\node at (1,2.8) {$\fouri F$};
	\node [o] (5) at (-0.5,0.8) {};
    \node [o] (1) at (0.4,-0.5) {};
	\node [o] (2) at (2,0) {};
	\node [o] (3) at (2,2) {};
	\node [o] (4) at (0.2,2.2) {};
    \draw [thick,->,>=stealth',bend left=10] (2) to (1);
	\draw [thick,->,>=stealth',bend left=10] (1) to (5);
	\draw [thick,->,>=stealth',bend left=10] (5) to (4);
	\draw [thick,<->,>=stealth',bend left=10] (4) to (3);
    \node at (-0.5, 1.8) {\small $e$};
    \draw [-,dashed,color=red] (-0.8, 1.4) to (2.7, 0.4);
	\draw [->,>=stealth',color=red] (0.45, 1.05) to (0.57, 1.47);
    \node [color=red] at (3.2, 0.7) {\small $\vec D^*$};
    \end{scope}

    \tikzstyle{o}=[circle,fill,scale=.3,draw]
    \begin{scope}[shift={(5,-4)}]
	\node at (1,2.8) {$\fouri F^*$};
	\node [o] (5) at (-0.5,0.8) {};
    \node [o] (1) at (0.4,-0.5) {};
	\node [o] (2) at (2,0) {};
	\node [o] (3) at (2,2) {};
	\node [o] (4) at (0.2,2.2) {};
    \draw [thick,-,>=stealth',bend left=10] (2) to (1);
	\draw [thick,<-,>=stealth',bend left=10] (1) to (5);
	\draw [thick,-,>=stealth',bend left=10] (5) to (4);
	\draw [thick,<-,>=stealth',bend left=10] (4) to (3);
    \node at (-0.5, 1.8) {\small $e$};
    \draw [-,dashed,color=red] (-0.8, 1.4) to (2.7, 0.4);
	\draw [->,>=stealth',color=red] (0.45, 1.05) to (0.57, 1.47);
    \node [color=red] at (3.2, 0.7) {\small $\vec D^*$};
    \end{scope}
\end{tikzpicture}

            \caption{A pair of orientations and a pair of fourientations corresponding to the setup of Lemma \ref{lem:oldcase}. 
            The first row shows how $\ori O_1$ is transformed into $\ori O_2$. 
            In the second row, for the edges not drawn in $\fouri F$ and $\fouri F^*$, the two fourientations coincide; for the edges drawn, the two pictures show the possible ways how $\fouri F$ and $\fouri F^*$ can differ in the sense of Lemma~\ref{lem:info in the table}(3).
            }
            \label{fig:easycase}
\end{figure}

\begin{lemma}\label{lem:oldcase}
Consider the setup of Theorem~\ref{thm:consistency}(1), and use the notations of \eqref{eq:orientation_names}.
Further assume that $\ori O_2$ is obtained from $\ori O_1$ by reversing $f$ and then reversing a signed circuit $\vec C$ containing $f$. Then,  \[(B_1^c \cap B_2^c \cap C)\setminus f = \emptyset.\]
\end{lemma}
\begin{proof}


We use the notations \eqref{eq:notations} and \eqref{eq:F_and_Fstar_def}.

First, note that since $f$ is reversed twice, and the other elements of $C$ are reversed once, we have $\ori O_1 =  \reverse{C \setminus f}{\ori O_2}$. This is also shown in the first row of Figure~\ref{fig:easycase}. 

By Lemma~\ref{lem:info in the table}(3), for $x\in C\setminus f$, we have $\ori{O}_1\oriat{x}\subseteq\fouri{F}\oriat{x}$. Hence, $\vec C\setminus f$ is compatible with $\fouri{F}$. This is also demonstrated in the the second row of Figure~\ref{fig:easycase}.

Suppose for the sake of contradiction that there exists some $e \in (B_1^c \cap B_2^c \cap C)\setminus f$. Then, since $e \in C \setminus f$, we know that $\ori {O}_1 \oriat{e} \not= \ori{O}_2 \oriat{e}$. Thus, we can apply Lemma~\ref{lem:info in the table}(4) and conclude that $\fouri{F}\oriat{e}=\ori{O}_1\oriat{e}$ and $\fouri{F^*}\oriat{e}=\emptyset$.

Since $\fouri{F}\oriat{e}=\ori{O}_1\oriat{e}$, we know that $\fouri{F}\oriat{e} \in \{-,+\}$. Furthermore, by Lemma~\ref{lem:keylemma}, we know that $\fouri{F}$ cannot be compatible with any signed circuits. In particular, the conditions for Lemma~\ref{lem:3painting} apply, but the first possibility is impossible. This means that there exists a signed cocircuit $\vec{D^*}$ such that $e\in D^*$ and $\vec{D^*}$ is compatible with 
$(-\fouri F)^c$. 

Because $\vec C\setminus f$ is compatible with $\fouri{F}$, it follows from Lemma~\ref{lem:intersection} that $D^*\cap C=\{e, f\}$. This idea is also demonstrated in the second row in Figure~\ref{fig:easycase}. 

Next, we claim that $\vec{D^*}$ is also compatible with 
$(-\fouri F^*)^c$. We can show this directly by considering elements of $D^*$ that are in $C\setminus f$, and elements of $D^*$ that are not in $C \setminus f$. 
\begin{enumerate}
    \item We proved above that $e$ is the only element of $D^*$ that is also in $C\setminus f$. For this case, recall that $\fouri{F^*}\oriat{e}=\emptyset$. This means that $(-\fouri{F^*})^c\oriat{e}=\pm$ and the compatibility must hold regardless of $\vec{D^*}\oriat{e}$. 
    
    \item For element $x$ that is in $D^*$ but not in $C\setminus f$, we can use that fact that $\ori O_1 =\reverse{C \setminus f}{\ori O_2}$ to conclude that $\ori O_1\oriat{x} = \ori O_2 \oriat{x}$. In particular, we can apply Lemma~\ref{lem:info in the table}(1) to say that $\fouri{F}\oriat{x} = \fouri{F^*}\oriat{x}$. Because $\vec{D^*}$ is compatible with $(-\fouri F)^c$ by construction, it follows that $\vec{D^*}$ is also compatible with $(-\fouri F^*)^c$.
\end{enumerate}

We have now shown that there exists a signed cocircuit $\vec{D^*}$ that is compatible with $(-\fouri F^*)^c$. However, this directly contradicts the conclusion of Lemma~\ref{lem:keylemma}(2). 
\end{proof}

In addition to Lemma~\ref{lem:oldcase}, we will need one more lemma for the proof of Theorem~\ref{thm:consistency}(1), namely Lemma~\ref{lem:newcase}. First, we introduce an auxiliary lemma that gives compatibility conditions for the sum of two simple $1$-chains.

\begin{lemma}\label{lem:sumcompat} Let $\vec P$ and $\vec Q$ be simple 1-chains and let $\fouri F$ be a fourientation. The 1-chain $\vec P + \vec Q$ is compatible with $\fouri F$ if and only if for every $x \in P \cup Q$, at least one of the following properties hold: 
\renewcommand{\theenumi}{\roman{enumi}}
\begin{enumerate}
    \item $\fouri F\oriat{x} = \pm$
    \item $\vec P\oriat {x} \compat \fouri F\oriat{x}$
    \item $\vec Q\oriat {x} \compat \fouri F\oriat{x}$
    \item $\vec P\oriat {x} = -\vec Q\oriat {x}$
\end{enumerate}
\end{lemma}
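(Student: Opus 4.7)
The proof is a pointwise case analysis. By Definition~\ref{def:fouricompat}, compatibility of $\vec P + \vec Q$ with $\fouri F$ is equivalent to $(\vec P + \vec Q)\oriat x \compat \fouri F\oriat x$ for every $x \in E$. For $x \notin P \cup Q$ both $\vec P\oriat x$ and $\vec Q\oriat x$ vanish, so the sum is zero and is automatically compatible; this is why the quantifier in the statement can be restricted to $x \in P \cup Q$.

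Next, for each $x \in P \cup Q$, I would analyze what happens according to $(\vec P\oriat x, \vec Q\oriat x)$. Simplicity of $\vec P$ and $\vec Q$ forces $\vec P\oriat x, \vec Q\oriat x \in \{-1, 0, 1\}$, and the sign of $(\vec P + \vec Q)\oriat x$ is then determined by a simple rule: if the two coefficients are nonzero with opposite signs the sum is zero (this is precisely condition (iv)); otherwise the sum is nonzero and its sign is the common sign of whichever summand(s) are nonzero, with magnitude $1$ or $2$.

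For the forward direction, I would assume the sum is compatible. When (iv) fails, the sum is a nonzero integer whose sign lies in $\fouri F\oriat x$, and this sign is shared with every nonzero coefficient among $\vec P\oriat x, \vec Q\oriat x$; this immediately yields (ii) or (iii), with (i) covering the case $\fouri F\oriat x = \pm$. For the backward direction, (i) makes compatibility automatic and (iv) makes the sum zero; in case (ii) with $\vec P\oriat x$ nonzero, the sign of $(\vec P + \vec Q)\oriat x$ is either the sign of $\vec P\oriat x$ (which is contained in $\fouri F\oriat x$ by hypothesis) or zero (if $\vec Q\oriat x$ cancels it), and symmetrically for (iii).

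No step presents a real obstacle; the entire argument reduces to a finite table check over $\{-1,0,1\}^2 \times \{\emptyset, -, +, \pm\}$. The only point that requires care is applying simplicity at exactly the step where the sign of the sum is read off from the signs of its summands, and tracking when coefficients vanish so that conditions (ii) and (iii) are used with the correct (nonzero) summand controlling the sign of the sum.
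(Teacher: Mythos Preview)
Your approach is essentially identical to the paper's: both unfold Definition~\ref{def:fouricompat} and do a pointwise case analysis, using simplicity to control the sign of the sum.

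There is, however, a genuine gap in the backward direction, and it is worth flagging because the paper's own proof shares it. Condition (ii) (and symmetrically (iii)) can be satisfied \emph{trivially} when $\vec P\oriat x = 0$, since by Definition~\ref{def:fouricompat}(i) the value $0$ is compatible with every $\fouri F\oriat x$. Your phrase ``in case (ii) with $\vec P\oriat x$ nonzero'' restricts to a subcase and silently drops the complementary one. That case is not vacuous: take $\vec P\oriat x = -1$, $\vec Q\oriat x = 0$, $\fouri F\oriat x = +$. Then $x \in P \cup Q$, conditions (i), (ii), (iv) fail, condition (iii) holds (because $0 \compat +$), yet $(\vec P+\vec Q)\oriat x = -1$ is not compatible with $+$. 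So the ``if'' direction of the lemma is literally false as stated; the intended statement should require the relevant coefficient in (ii) and (iii) to be nonzero (equivalently, that the sign of the coefficient lie in $\fouri F\oriat x$).

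This does not damage the paper: in the sole application (the proof of Lemma~\ref{lem:newcase}), conditions (ii) and (iii) are only invoked at points lying in the support of the respective $1$-chain, so the trivially-compatible-zero situation never arises. But your claim that ``the entire argument reduces to a finite table check'' with ``no real obstacle'' is too optimistic---doing that table check carefully is exactly what exposes the issue.
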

\begin{proof}By definition, $\vec P + \vec Q$ is compatible with $\fouri F$ if and only if for every $x \in E$, we have $(\vec P + \vec Q)\oriat{x} = 0$, $\fouri F\oriat{x} = \pm$, or $\fouri F \oriat {x}$ matches the sign of $(\vec P + \vec Q)\oriat{x}$. If $x \not\in (P\cup Q)$ or if $\vec P\oriat {x} = -\vec Q\oriat {x}$, then this implies that $(\vec P + \vec Q)\oriat{x} = 0$. Otherwise, if $\fouri F\oriat{x} \not= \pm$, then we must have $\fouri F\oriat{x} \in \{-,+\}$. It follows quickly from the fact that $\vec P$ and $\vec Q$ are simple that $(\vec P +\vec Q)\oriat {x} \compat \fouri F\oriat{x}$ if and only if $\vec P\oriat {x} \compat \fouri F\oriat{x}$ or $\vec Q\oriat {x} \compat \fouri F\oriat{x}$. 
\end{proof}

\begin{remark}Note that whenever condition (i) of Lemma~\ref{lem:sumcompat} is satisfied, either condition (ii) or (iii) must also be satisfied. In particular, this means that Lemma~\ref{lem:sumcompat} would still be valid with this condition removed. We include it so that the lemma is slightly easier to apply. \end{remark} 



\begin{lemma}\label{lem:newcase}
Consider the setup of Theorem \ref{thm:consistency}(1), and use the notations of \eqref{eq:orientation_names}.
Further assume that $\ori O_2$ is obtained from $\ori O_1$ by reversing a signed circuit $\vec C$ containing $f$, then reversing $f$, and then finally reversing a signed cocircuit $\vec{C^*}$ containing $f$. Then  \[(B_1^c \cap B_2^c \cap C)\setminus f = \emptyset.\]
\end{lemma}
\begin{proof}
The conditions of the lemma imply that $\vec C \compat \ori{O}_1$ and $-\vec{C^*} \compat \ori{O}_2$.
Moreover, by Theorem~\ref{thm:generalDescription}(d), we know that $C\cap C^*=\{f,g\}$ for some $g\neq f$. Furthermore, this theorem also says that $\ori {O}_1 = \reverse {P}{\ori{O}_2}$, where $P:=(C\cup C^*)\setminus g$. In other words, $P$ is precisely the subset of $E$ where $\ori O_1$ and $\ori O_2$ differ. This is also shown in the first row of Figure~\ref{fix}.




For $x \in E\setminus P$, we can apply Lemma~\ref{lem:info in the table}(1) to conclude that $\fouri{F}\oriat{x} = \fouri{F^*}\oriat{x}$. Furthermore, for $x \in P$, we can apply Lemma~\ref{lem:info in the table}(3) to conclude that $\ori O_1 \oriat{x} \subseteq \fouri F\oriat{x}$ and $\fouri{F^*} \oriat{x} \subseteq \ori O_2\oriat{x}$. This second containment also implies that $\ori O_2\oriat{x}\subseteq (-\fouri{F^*})^c \oriat{x}$. This is also shown in the second row of Figure~\ref{fix}.

Next we claim that $\vec C\setminus g$ is compatible with $\fouri{F}$ and $-\vec{C^*}\setminus g$ is compatible with  $(-\fouri{F^*})^c$. Both of these claims follow from the results from the previous two paragraphs. In particular, $(\vec C\setminus g) \subset \vec C$, so we know that $(\vec C\setminus g) \compat \ori{O}_1$. Furthermore, it follows from $\ori O_1 \oriat{x} \subseteq \fouri F\oriat{x}$ that $(\vec C\setminus g) \compat \fouri{F}$. The argument that $(-\vec{C^*}\setminus g) \compat (-\fouri{F^*})^c$ is analogous.

 
\begin{figure}[h]
            \centering

    \begin{tikzpicture}[scale=1]
    \tikzstyle{o}=[circle,fill,scale=.3,draw]
    \begin{scope}[shift={(-5,0)}]
	\node at (0.5,2.8) {$\ori O_1$};
	\node [o] (0) at (0,0.2) {};
    \node [o] (1) at (1.5,-0.6) {};
	\node [o] (2) at (3,0.2) {};
	\node [o] (3) at (3,1.8) {};
	\node [o] (4) at (1.5, 2.8) {};
    \node [o] (5) at (0, 1.8) {};
    \draw [color=red,dashed,->,>=stealth',bend right=40] (3.8, 1) to (2,3.3);
    \node[] at (4.1, 2) {\color{red} $\vec{C}$};
    \node[] at (-0.35, 1) {\small $g$};
    \draw [thick,->,>=stealth',bend right=10] (0) to (1);
	\draw [thick,->,>=stealth',bend right=10] (1) to (2);
	\draw [thick,->,>=stealth',bend right=10] (2) to node[fill = white,inner sep=1pt]{\footnotesize $\vec f$} (3);
	\draw [thick,->,>=stealth',bend right=10] (3) to (4);
	\draw [thick,->,>=stealth',bend right=10] (4) to (5);
    \draw [thick,->,>=stealth',bend right=10] (5) to (0);
    \node [o] (6) at (0.75,0.2) {};
    \node [o] (7) at (1.5,0.2) {};
	\node [o] (8) at (2.25,0.2) {};
	\node [o] (9) at (0.75,1.8) {};
    \node [o] (10) at (1.5, 1.8) {};
	\node [o] (11) at (2.25, 1.8) {};
	\draw [thick,->,>=stealth'] (6) to (9);
	\draw [thick,->,>=stealth'] (7) to (10);
	\draw [thick,->,>=stealth'] (8) to (11);
    \end{scope}

    \begin{scope}[shift={(5,0)}]
	\node at (0.5,2.8) {$\ori O_2$};
	\node [o] (0) at (0,0.2) {};
    \node [o] (1) at (1.5,-0.6) {};
	\node [o] (2) at (3,0.2) {};
	\node [o] (3) at (3,1.8) {};
	\node [o] (4) at (1.5, 2.8) {};
    \node [o] (5) at (0, 1.8) {};
    \draw [color=red,dashed,-] (-0.5, 1.5) to (4,1.5);
    \draw [->,>=stealth',color=red] (3.8, 1.5) to (3.8, 1.25);
    \node[] at (-0.9, 1.7) {\color{red} $-\vec{C^*}$};
    \draw [thick,<-,>=stealth',bend right=10] (0) to (1);
	\draw [thick,<-,>=stealth',bend right=10] (1) to (2);
	\draw [thick,<-,>=stealth',bend right=10] (2) to node[fill = white,inner sep=1pt]{\footnotesize $-\vec f$} (3);
	\draw [thick,<-,>=stealth',bend right=10] (3) to (4);
	\draw [thick,<-,>=stealth',bend right=10] (4) to (5);
	\draw [thick,->,>=stealth',bend right=10] (5) to (0);
    \node[] at (-0.35, 1) {\small $g$};
    \node [o] (6) at (0.75,0.2) {};
    \node [o] (7) at (1.5,0.2) {};
	\node [o] (8) at (2.25,0.2) {};
	\node [o] (9) at (0.75,1.8) {};
    \node [o] (10) at (1.5, 1.8) {};
	\node [o] (11) at (2.25, 1.8) {};
	\draw [thick,<-,>=stealth'] (6) to (9);
	\draw [thick,<-,>=stealth'] (7) to (10);
	\draw [thick,<-,>=stealth'] (8) to (11);
    \end{scope}

    \tikzstyle{o}=[circle,fill,scale=.3,draw]
    \begin{scope}[shift={(-5,-4)}]
		\node at (0.5,2.8) {$\fouri F$};
	\node [o] (0) at (0,0.2) {};
    \node [o] (1) at (1.5,-0.6) {};
	\node [o] (2) at (3,0.2) {};
	\node [o] (3) at (3,1.8) {};
	\node [o] (4) at (1.5, 2.8) {};
    \node [o] (5) at (0, 1.8) {};
    \draw [thick,->,>=stealth',bend right=10] (0) to (1);
	\draw [thick,<->,>=stealth',bend right=10] (1) to (2);
	\draw [thick,->,>=stealth',bend right=10] (2) to node[fill = white,inner sep=1pt]{\footnotesize $\vec f$} (3);
	\draw [thick,->,>=stealth',bend right=10] (3) to (4);
	\draw [thick,->,>=stealth',bend right=10] (4) to (5);
    \node[] at (2.8, 2.3) {\footnotesize $e$};
    \node [o] (6) at (0.75,0.2) {};
    \node [o] (7) at (1.5,0.2) {};
	\node [o] (8) at (2.25,0.2) {};
	\node [o] (9) at (0.75,1.8) {};
    \node [o] (10) at (1.5, 1.8) {};
	\node [o] (11) at (2.25, 1.8) {};
	\draw [thick,->,>=stealth'] (6) to (9);
	\draw [thick,->,>=stealth'] (7) to (10);
	\draw [thick,<->,>=stealth'] (8) to (11);
    \draw [-,color=red,dashed,rounded corners=5pt] (-0.5, 0.7) -- (1, 1.4) --  (1.2,2.1) -- (3.1,2.8);
	\draw [->,>=stealth',color=red] (3, 2.75) to (2.92, 2.98);
    \node[] at (-0.9, 0.7) {\color{red} $\vec{D^*}$};
    \end{scope}

    \tikzstyle{o}=[circle,fill,scale=.3,draw]
    \begin{scope}[shift={(5,-4)}]
	\node at (0.5,2.8) {$\fouri F^*$};
	\node [o] (0) at (0,0.2) {};
    \node [o] (1) at (1.5,-0.6) {};
	\node [o] (2) at (3,0.2) {};
	\node [o] (3) at (3,1.8) {};
	\node [o] (4) at (1.5, 2.8) {};
    \node [o] (5) at (0, 1.8) {};
    \draw [thick,<-,>=stealth',bend right=10] (0) to (1);
	\draw [thick,<-,>=stealth',bend right=10] (1) to (2);
	\draw [thick,<-,>=stealth',bend right=10] (2) to node[fill = white,inner sep=1pt]{\footnotesize $-\vec f$} (3);
	\draw [thick,-,>=stealth',bend right=10] (3) to (4);
	\draw [thick,-,>=stealth',bend right=10] (4) to (5);
    \node[] at (2.8, 2.3) {\footnotesize $e$};
    \node [o] (6) at (0.75,0.2) {};
    \node [o] (7) at (1.5,0.2) {};
	\node [o] (8) at (2.25,0.2) {};
	\node [o] (9) at (0.75,1.8) {};
    \node [o] (10) at (1.5, 1.8) {};
	\node [o] (11) at (2.25, 1.8) {};
	\draw [thick,<-,>=stealth'] (6) to (9);
	\draw [thick,-,>=stealth'] (7) to (10);
	\draw [thick,<-,>=stealth'] (8) to (11);
    \draw [-,color=red,dashed,rounded corners=5pt] (-0.5, 0.7) -- (1, 1.4) --  (1.2,2.1) -- (3.1,2.8);
	\draw [->,>=stealth',color=red] (3, 2.75) to (2.92, 2.98);
    \node[] at (-0.9, 0.7) {\color{red} $\vec{D^*}$};
    \end{scope}
\end{tikzpicture}
            
            \caption{A pair of orientations and a pair of fourientations corresponding to the setup of Lemma \ref{lem:newcase}. 
            The first row shows how $\ori O_1$ is transformed into $\ori O_2$. In the second row, for the edges not drawn in $\fouri F$ and $\fouri F^*$, the two fourientations coincide; for the edges drawn, the two pictures show the possible ways how $\fouri F$ and $\fouri F^*$ can differ in the sense of Lemma~\ref{lem:info in the table}(3).}
            \label{fix}
\end{figure}


Assume for the sake of contradiction that there exists $e\in (B_1^c \cap B_2^c \cap C)\setminus f$. We will start by showing that $e\neq g$. Notice that we must have $\vec C\oriat{f} = \vec {C^*}\oriat{f}$ since $\vec C$ is compatible with $\ori O_1$ and $\vec {C^*}$ is compatible with $\reverse{C\setminus f}\ori O_1$. From here, it follows from Lemma~\ref{lem:ccint} that $\vec{C}\oriat{g} = -\vec{C^*}\oriat{g}$. If $e = g$, then we can apply Lemma~\ref{lem:info in the table}(2) to say that $\fouri{F^*}\oriat{g}=\emptyset$. Then, since $-\vec{C^*}\setminus g \compat (-\fouri{F^*})^c$, it follows that $-\vec{C^*} \compat (-\fouri{F^*})^c$. However, this contradicts Lemma~\ref{lem:keylemma}(2). Thus, we must have $e \not= g$. 

Since $e \in C$ and $e \not= g$, we know that $\ori O_1\oriat{e} \not= \ori O_2 \oriat{e}$. Thus, we can apply Lemma~\ref{lem:info in the table}(4) to conclude that $\fouri{F}\oriat{e}=\ori{O}_1\oriat{e}$ and $\fouri{F^*}\oriat{e}=\emptyset$. Because $\fouri{F}\oriat{e}=\ori{O}_1\oriat{e}$, it follows from Lemma~\ref{lem:3painting} that $e$ is contained in a signed circuit compatible with $\fouri F$ or a signed cocircuit compatible with $(-\fouri F)^c$. By Lemma~\ref{lem:keylemma}, the first option is impossible, so there must be a signed cocircuit containing $e$ that is compatible with $(-\fouri F)^c$. Call this signed cocircuit $\vec{D^*}$. 

Now we have a signed circuit $\vec{C}$ and a signed cocircuit $\vec{D^*}$ such that $e \in C \cap D^*$, $(\vec C \setminus g) \compat \fouri F$, and $\vec {D^*} \compat (-\fouri F)^c$. It follows from Lemma~\ref{lem:intersection} that $C \cap D^* = \{e,g\}$. Recall that $-\vec{C^*}$ is a signed cocircuit that is compatible with $\ori O_2$ and consider the 1-chain $-\vec{C^*} + \vec{D^*}$. Note that the support of this one chain is nonempty because $e \in D^* \setminus C^*$. 

Our goal for the rest of the proof will be to show that $-\vec{C^*} + \vec{D^*}$ is compatible with $(-\fouri{F^*})^c$. Then, since $-\vec{C^*} + \vec{D^*}\in\row(A)$, the row space of the fixed matrix realizing $M$, we can apply Lemma~\ref{lem:decompose} to write $-\vec{C^*} + \vec{D^*}$ as a sum of signed cocircuits that are compatible with $(-\fouri{F^*})^c$. This contradicts Lemma~\ref{lem:keylemma}(2) and concludes the proof. 
 


 
We apply Lemma~\ref{lem:sumcompat} to prove that $-\vec{C^*} + \vec{D^*}$ is compatible with $(-\fouri{F^*})^c$. For every $x \in C^* \cup D^*$, we consider the following four exclusive cases.

\begin{enumerate}[resume*=cases,start = 1]
\item $x = e$:\\ 
\end{enumerate}

We showed earlier (specifically in paragraph 5 of the current proof) that $\fouri{F^*}\oriat{x}=\emptyset$. Thus, $(-\fouri{F^*}^c)\oriat{x}=\pm$ and we satisfy condition (i) of Lemma~\ref{lem:sumcompat}. \\

\begin{enumerate}[resume*=cases]
\item $x = g$:\\ 
\end{enumerate}




Recall that $\vec {C} \setminus g$ is compatible with $\fouri F$ and that $\vec{D^*}$ is compatible with $(-\fouri F)^c$. Thus, we have $\vec {D^*}\oriat{e}=(-\fouri F)^c\oriat{e}=\fouri F\oriat{e}=\vec{C}\oriat{e}$. Since $D^* \cap C = \{e,g\}$, we can apply Lemma~\ref{lem:ccint} to conclude that $\vec{D^*}\oriat{g} = -\vec{C}\oriat{g}$. Furthermore, we showed that $ \vec{C}\oriat{g} = -\vec{C^*}\oriat{g}$. Thus, we have $\vec{D^*}\oriat{g} = \vec{C^*}\oriat{g}$, and condition (iv) of Lemma~\ref{lem:sumcompat} is satisfied.\\

\begin{enumerate}[resume*=cases]
\item $x\in C^*\setminus g$:\\ 
\end{enumerate}

We have already shown that $-\vec{C^*}\setminus g$ is compatible with  $(-\fouri{F^*})^c$. In particular, this means that $-\vec{C^*}\oriat{x}\compat (-\fouri{F^*})^c\oriat{x}$. Thus, condition (ii) of Lemma~\ref{lem:sumcompat} is satisfied.\\

\begin{enumerate}[resume*=cases]
\item $x\in D^*\setminus (C^*\cup e)$:\\ 
\end{enumerate}

Recall that $\ori {O}_1 = \reverse{P}{\ori {O}_2}$, where $P = (C \cup C^*) \setminus f$. Since $D^* \cap C = \{e,g\}$, we know that $x \not\in P$. In particular, this means that $\ori O_1\oriat{x} = \ori O_2\oriat{x}$. By Lemma~\ref{lem:info in the table}(1), this implies that $\fouri F\oriat{x} = \fouri {F^*} \oriat{x}$, which in turn implies that $(-\fouri F)^c\oriat{x} = (-\fouri {F^*})^c \oriat{x}$. By construction, we know that $\vec D^* \compat (-\fouri F)^c$. Thus, we must also have $\vec D^*\oriat{x} \compat (-\fouri F^*)^c\oriat{x}$ and condition (iii) of Lemma~\ref{lem:sumcompat} is satisfied.

\end{proof}
\subsection{Proving the Main Theorem}\label{sec:mainproof}

\begin{proof}[Proof of Theorem~\ref{thm:consistency}(1).]
For this proof, we will not need to use the notation from equations~\eqref{eq:notations} or~\eqref{eq:F_and_Fstar_def}, but we will use the following extension of~\eqref{eq:orientation_names}. 
\begin{equation}\label{eq:orientation_names2}
\text{for $i\in\{1,2\}$, }\hspace{ 1 cm}\ori O_i:=\BBY_{(M,\sigma,\sigma^*)}(B_i) \hspace{ 1 cm} \text{ and} \hspace{ 1 cm}\ori O'_i:=\BBY_{(M\setminus e,\sigma\setminus e,\sigma^*\setminus e)}(B_i).
\end{equation}
To reiterate our goal using this condensed notation, we are given that $[\vec{f}] \cdot \ori O_1 = \ori O_2$ and we need to show that $[\vec{f}] \cdot \ori O'_1 = \ori O'_2$. 

By Lemma~\ref{lem:BBY_delcont}, $\ori O_i$ and $\ori O'_i$ coincide on $E\setminus e$. We will show that the action of $[\vec{f}]$ on $\ori O_1$ descends to the action of $[\vec{f}]$ on $\ori O'_1$ as deleting $e$ does not affect the action in an essential way.

By applying Theorem~\ref{thm:generalDescription}, we can break up the problem into a manageable number of cases. First, we can consider whether or not $\vec f \compat \ori O_1$ and whether or not $\vec f \compat \ori O_2$.\\

\begin{enumerate}[resume*=cases,start = 1]
\item $\vec f \not\compat \ori O_1$ and $\vec f \compat \ori O_2$:\\ 
\end{enumerate}

It follows from Theorem~\ref{thm:generalDescription} that $\ori O_2 = \reverse {f}{\ori O_1}$. Since $\ori O_i$ and $\ori O'_i$ coincide on $E\setminus e$ for $i\in\{1,2\}$, we have $\ori O'_2 = \reverse {f}{\ori O'_1}$, which implies that $[\vec{f}] \cdot \ori O'_1 = \ori O'_2$.
\\
\begin{enumerate}[resume*=cases]
\item $\vec f \not\compat \ori O_1$ and $\vec f \not\compat \ori O_2$:\\
\end{enumerate}

By Theorem~\ref{thm:generalDescription}, to get $\ori O_2$ from $\ori O_1$, we reverse $\vec f$ first and then reverse exactly one signed circuit $\vec C$ or exactly one signed cocircuit $\vec{C^*}$. Furthermore, the circuit or cocircuit that we reverse must contain $f$. We consider the two possibilities in two separate subcases.  \\

\begin{enumerate}[resume* = subcases, start = 1] 
\item A signed circuit $\vec C$ is reversed:\\
\end{enumerate}

For this case, we first apply Lemma~\ref{lem:oldcase}, which implies that $e \in (B_1^c \cap B_2^c)\setminus f$. In particular, we have $e\notin C$. It follows that $\vec C \compat \reverse{f}\ori O'_1$, and that reversing $\vec f$ in $\ori O'_1$ and then $\vec C$ leads to $\ori O'_2$. In particular, we have $[\vec{f}] \cdot \ori O'_1 = \ori O'_2$ as desired. \\

\begin{enumerate}[resume* = subcases]
\item A signed cocircuit $\vec{C^*}$ is reversed:\\
\end{enumerate}

By Lemma~\ref{lem:breaking cocircuit}, $\vec{C^*}\setminus e$ is a signed cocircuit of $M\setminus e$ or a disjoint union of signed cocircuits of $M\setminus e$. It follows that $\ori O'_2$ can be obtained from $\ori O'_1$ by reversing $\vec f$ and then reversing the cocircuits that make up $\vec{C^*}\setminus e$. Hence $[\vec{f}] \cdot \ori O'_1 = \ori O'_2$.\footnote{Note that Theorem \ref{thm:generalDescription} implies that $\ori O'_2$ can be obtained from $\ori O'_1$ by reversing $\vec f$ and a single cocircuit. In particular, this implies that $\vec{C^*}\setminus e$ must actually be a single cocircuit. This fact is not necessary for the proof, but we feel that it merits mentioning.}\\ 

\begin{enumerate}[resume* = cases]
\item $\vec f \compat \ori O_1$ and $\vec f \compat \ori O_2$.\\ 
\end{enumerate}

For this case, we use the fact that 
\[ [\vec f] \cdot \ori O_1 = \ori O_2 \iff [-\vec f] \cdot \ori O_2 = \ori O_1.\]
This means that after switching $\ori O_1$ with $\ori O_2$ and $\vec f$ with $-\vec f$, we have reduced to Case 2.\\

\begin{enumerate}[resume* = cases]
\item $\vec f \compat \ori O_1$ and $\vec f \not\compat \ori O_2$.\\ 
\end{enumerate}

By Theorem~\ref{thm:generalDescription}, $\ori O_2$ can be obtained from $\ori O_1$ by either \begin{itemize}
\item reversing a circuit $\vec C$, reversing $f$, and then reversing a cocircuit $\vec{C^*}$, or 
\item reversing a cocircuit, reversing $f$, and then reversing a circuit. \end{itemize} 
Furthermore, using similar logic to what we used to reduce Case 3 to Case 2, we can assume that the circuit is reversed first. In particular, if the cocircuit is reversed first, we swap $\ori O_1$ with $\ori O_2$ and $\vec f$ with $-\vec f$. 

Hence, suppose that the circuit $\vec C$ is reversed first. Here, after applying Lemma~\ref{lem:newcase}, we can conclude that $e\notin C$. This means that $\ori O'_2$ can be obtained from $\ori O'_1$ by reversing $\vec{C}$, reversing $-\vec f$, and then reversing $\vec{C^*}\setminus e$. By  Lemma~\ref{lem:breaking cocircuit}, $\vec{C^*}\setminus e$ is a signed cocircuit of $M\setminus e$ or a disjoint union of signed cocircuits of $M\setminus e$. It follows that $[\vec{f}] \cdot \ori O'_1 = \ori O'_2$. 

\end{proof} 

We will show that Theorem~\ref{thm:consistency}(2) follows from Theorem~\ref{thm:consistency}(1) by a duality argument. First, we state a few straightforward results. 

\begin{lemma} \label{lem:BBY_dual_equal}
Let $B\in\mathbf{B}(M)$.
Recall that $E\setminus B\in\mathbf{B}(M^*)$ and vice versa.
We have $\BBY_{(M,\sigma,\sigma^*)}(B)=\BBY_{(M^*,\sigma^*,\sigma)}(E\setminus B)$.
\end{lemma}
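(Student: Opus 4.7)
The plan is to unwind the definition of $\BBY$ on both sides of the claimed equality and verify that the two fourientations whose intersection defines each BBY-orientation match pairwise across the duality. Concretely, writing $B' := E\setminus B$, I want to prove the two identities
\[
\fourmap{B'}{\sigma^*}^{M^*} = \fourmap{B}{\sigma^*}^{M}
\qquad\text{and}\qquad
\fourmap{B'}{\sigma}^{M^*} = \fourmap{B}{\sigma}^{M},
\]
from which the lemma follows by intersecting and invoking the definition $\BBY_{(\cdot,\cdot,\cdot)}(\cdot) = \fouri F(\cdot,\cdot) \cap \fouri F(\cdot,\cdot)$.

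To establish the first identity, I would first note that both fourientations are bi-oriented on the same set: on the left, $\fourmap{B'}{\sigma^*}^{M^*}$ bi-orients $B'$ by definition, while on the right $\fourmap{B}{\sigma^*}^{M}$ bi-orients $E\setminus B = B'$. Then I need to check that the signed coordinates on $B$ agree. For $e\in B$, the fundamental circuit of $e$ in $M^*$ with respect to the basis $B'$ is, by definition, the unique circuit of $M^*$ contained in $B'\cup\{e\} = (E\setminus B)\cup\{e\}$; by Definition~\ref{def:dual_OM} this is exactly the fundamental cocircuit $C^*_e$ of $e$ in $M$ with respect to $B$. Since the signed circuits of $M^*$ are literally the signed cocircuits of $M$, the signature $\sigma^*$ (viewed as a circuit signature of $M^*$) assigns to this fundamental circuit the same signed $1$-chain that it assigns to $C^*_e$ in $M$. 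So the two fourientations agree on $B$, hence everywhere. The second identity is symmetric: replace circuits with cocircuits and $\sigma^*$ with $\sigma$, using that the fundamental cocircuits of $M^*$ with respect to $B'$ are the fundamental circuits of $M$ with respect to $B$.

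There is really no serious obstacle here—the argument is essentially a bookkeeping exercise that leverages the already-standard fact (Definition~\ref{def:dual_OM} together with the description of fundamental circuits/cocircuits through a basis) that complementation interchanges the roles of fundamental circuits and fundamental cocircuits. The only thing one has to be careful about is keeping straight which element set is being bi-oriented and which is being signed, so the cleanest presentation is probably to write out explicitly, for every $e\in E$, the value $\fourmap{B'}{\sigma^*}^{M^*}\oriat{e}$ and $\fourmap{B}{\sigma^*}^{M}\oriat{e}$ in cases $e\in B$ vs.\ $e\in B'$, and likewise for the other pair. The lemma then concludes by applying Lemma~\ref{lem:Fouriprops}(2) (or just the definition of pointwise intersection) to combine the two identities.
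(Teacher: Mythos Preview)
Your proposal is correct and takes essentially the same approach as the paper's proof, which is a one-sentence sketch noting that the fundamental cocircuit of $e\notin B$ with respect to $E\setminus B\in\mathbf{B}(M^*)$ is the fundamental circuit of $e$ with respect to $B\in\mathbf{B}(M)$, and analogously for $e\in B$. Your version simply unpacks this observation into the two fourientation identities and spells out the bookkeeping more carefully.
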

\begin{proof}
Note that the fundamental cocircuit of $e\not\in B$ with respect to $E\setminus B\in\mathbf{B}(M^*)$ is the fundamental circuit of $e$ with respect to $B\in\mathbf{B}(M)$, and analogously for $e\in B$.
\end{proof}

\begin{cor}\label{cor:BBY_torsors_of_dual_matroids_are_compatible} 
The BBY torsors $\BBY_{(M,\sigma,\sigma^*)}$ and $\BBY_{(M^*,\sigma^*,\sigma)}$ agree in the sense that for any arc $\overrightarrow{f}$ and $B_1,B_2\in\mathbf{B}(M)$, if $[\overrightarrow{f}]\cdot \BBY_{(M,\sigma,\sigma^*)}(B_1)=\BBY_{(M,\sigma,\sigma^*)}(B_2)$, then $[\overrightarrow{f}] \cdot \BBY_{(M^*,\sigma^*,\sigma)}(E\setminus B_1)=\BBY_{(M^*,\sigma^*,\sigma)}(E\setminus B_2)$.
\end{cor}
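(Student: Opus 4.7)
The plan is to unwind the definitions and observe that every ingredient of the BBY torsor is invariant (or symmetric) under oriented-matroid duality, so the identity of BBY-images supplied by Lemma~\ref{lem:BBY_dual_equal} upgrades automatically to an identity of BBY-actions.

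First, I would check the structural identifications. By Definition~\ref{def:dual_OM} we have $\SignedCircuits{M^*}=\SignedCoCircuits{M}$ and $\SignedCoCircuits{M^*}=\SignedCircuits{M}$, so $\Lambda(M^*)=\Lambda^*(M)$ and $\Lambda^*(M^*)=\Lambda(M)$; consequently $S(M)=S(M^*)$ as subgroups of $\mathbb{Z}^E/(\Lambda(M)\oplus\Lambda^*(M))$, and $[\vec f]$ represents the same element in either description. Since the circuit-cocircuit equivalence relation uses both signed-circuit and signed-cocircuit reversals, and duality merely exchanges the two, the partitions $\ccequiv(M)$ and $\ccequiv(M^*)$ of $\Orientations{M}=\Orientations{M^*}$ coincide. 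By the lemma stated immediately before Lemma~\ref{lem:BBY_dual_equal} in the excerpt, an orientation is $(\sigma,\sigma^*)$-compatible for $M$ iff it is $(\sigma^*,\sigma)$-compatible for $M^*$, so $\ccmin(M)$ (with respect to $(\sigma,\sigma^*)$) equals $\ccmin(M^*)$ (with respect to $(\sigma^*,\sigma)$), and the two associated ``minimization'' operators $\ori O\mapsto\ori O^\circ$ agree.

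Next, I would verify that the canonical action is the same whether computed in $M$ or in $M^*$. The action of a generator $[\vec f]$ in Definition~\ref{def:groupaction} picks some $\ori O$ in the equivalence class with $-\vec f\compat\ori O$ (which exists by Corollary~\ref{cor:arccompat}, whose proof invokes only Lemma~\ref{lem:circorcocirc}, a statement symmetric in circuits and cocircuits) and returns $[\reverse{f}\ori O]$. Nothing in this recipe distinguishes signed circuits from signed cocircuits, so under the identifications of the previous paragraph the action of $[\vec f]$ on $\ccequiv(M)$ equals the action of $[\vec f]$ on $\ccequiv(M^*)$. Passing through Definition~\ref{def:canonical_action_for_representatives}, which applies $(\cdot)^\circ$ afterwards, and using that $\ccmin$ and $(\cdot)^\circ$ also agree under duality, the canonical action of $[\vec f]$ on circuit-cocircuit minimal orientations is the same in both settings.

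Finally, combining these observations with Lemma~\ref{lem:BBY_dual_equal} yields the corollary: if $[\vec f]\cdot\BBY_{(M,\sigma,\sigma^*)}(B_1)=\BBY_{(M,\sigma,\sigma^*)}(B_2)$, then since $\BBY_{(M,\sigma,\sigma^*)}(B_i)=\BBY_{(M^*,\sigma^*,\sigma)}(E\setminus B_i)$ for $i=1,2$ as orientations, and since $[\vec f]$ acts identically on these orientations in the two settings, we conclude $[\vec f]\cdot\BBY_{(M^*,\sigma^*,\sigma)}(E\setminus B_1)=\BBY_{(M^*,\sigma^*,\sigma)}(E\setminus B_2)$. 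There is no real obstacle here: the whole content is that the BBY construction is manifestly self-dual, so Lemma~\ref{lem:BBY_dual_equal} does all of the heavy lifting.
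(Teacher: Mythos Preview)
Your proposal is correct and follows the same approach the paper implicitly takes: the corollary is stated without proof because it is immediate from Lemma~\ref{lem:BBY_dual_equal} together with the self-duality of every ingredient (sandpile group, circuit-cocircuit equivalence, $(\sigma,\sigma^*)$-compatibility, and the canonical action). Your write-up simply makes explicit the routine checks that the paper leaves to the reader.
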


\begin{proof}We can apply Lemma~\ref{lem:BBY_dual_equal} to obtain the following chain of equalities. 

    \[[\overrightarrow{f}] \cdot \BBY_{(M^*,\sigma^*,\sigma)}(E\setminus B_1)=[\overrightarrow{f}] \cdot\BBY_{(M,\sigma,\sigma^*)}(B_1)=\BBY_{(M,\sigma,\sigma^*)}(B_2)=\BBY_{(M^*,\sigma^*,\sigma)}(E\setminus B_2).\]
\end{proof}

Now, we can apply Corollary~\ref{cor:BBY_torsors_of_dual_matroids_are_compatible} to prove Theorem \ref{thm:consistency}(2).

\begin{proof}[Proof of Theorem \ref{thm:consistency}(2)]


Let $\vec f, e, B_1, B_2$ be as in the theorem statement. 
Then, by Corollary \ref{cor:BBY_torsors_of_dual_matroids_are_compatible}, we have 
\[[\overrightarrow{f}] \cdot \BBY_{(M^*,\sigma^*,\sigma)}(E\setminus B_1)=\BBY_{(M^*,\sigma^*,\sigma)}(E\setminus B_2).\]

Moreover, $e\in (B_1\cap B_2)\setminus f$ is equivalent to $e\in (E\setminus B_1)^c\cap(E\setminus B_2)^c\setminus f$, so we can apply Theorem \ref{thm:consistency}(1) for $M^*$, $\sigma^*$ and $\sigma$, and obtain that 
\[[\vec{f}] \cdot \BBY_{(M^*\setminus e,\sigma^*\setminus e,\sigma\setminus e)}(E\setminus B_1) = \BBY_{(M^* \setminus e,\sigma^*\setminus e,\sigma\setminus e)}(E\setminus B_2).\]

Hence, applying Corollary \ref{cor:BBY_torsors_of_dual_matroids_are_compatible} to $M^*\setminus e=(M/e)^*$, we get that
\[[\vec{f}] \cdot \BBY_{(M/e,\sigma/e,\sigma^*/e)}((E\setminus e)\setminus (E\setminus B_1)) = \BBY_{(M/ e,\sigma/ e,\sigma^*/ e)}((E\setminus e)\setminus (E\setminus B_2)),\]
that is,
\[[\vec{f}] \cdot \BBY_{(M/ e,\sigma/e,\sigma^*/ e)}(B_1\setminus e) = \BBY_{(M/ e,\sigma/ e,\sigma^*/ e)}(B_2\setminus e).\]
This is exactly what we wanted to prove. 
\end{proof}

Finally, we prove Theorem~\ref{thm:consistency}(3), which is relatively straightforward.

\begin{proof}[Proof of Theorem~\ref{thm:consistency}(3).]
By Theorem \ref{thm:generalDescription}, to get from $\BBY_{(M,\sigma,\sigma^*)}(B_1)$ to $\BBY_{(M,\sigma,\sigma^*)}(B_2)$, we need to reverse at most one circuit and cocircuit containing $f$, then reverse $f$, then reverse again at most one signed circuit or cocircuit containing $f$.

It is immediate from the definition of a connected component that if $e$ and $f$ are in different connected components, then $e$ is not contained by any circuit containing $f$. 
Dually, a cocircuit cannot intersect more than one connected component \cite[4.2.18]{Oxley_book}. 

Hence the orientations $\BBY_{(M,\sigma,\sigma^*)}(B_1)$ and $\BBY_{(M,\sigma,\sigma^*)}(B_2)$ agree on the connected component containing $e$. As the fundamental circuits and cocircuits of elements stay within their connected component, this means that $B_1$ and $B_2$ agree on the connected component containing $e$.
\end{proof}

\section{A Uniqueness Conjecture and other Open Questions}\label{sec:further}

In this section, we will discuss some of the remaining mysteries involving sandpile torsors, which were formally defined in Section~\ref{sec:torsors}. 

By proving Theorem~\ref{thm:consistency}, we have demonstrated that the BBY sandpile torsor algorithm is consistent (with the auxiliary data of either acyclic or triangulating circuit-cocircuit signatures). There are also 3 other consistent sandpile torsor algorithms closely related to BBY. In particular, we could swap $\fouri F(B,\sigma)$ with $-\fouri F(B,\sigma)$ and/or we could swap $\fouri F(B,\sigma^*)$ with $-\fouri F(B,\sigma^*)$. The first change is equivalent to replacing $\sigma$ with $\SignedCircuits M \setminus \sigma$ while the second change is equivalent to replacing $\sigma^*$ with $\SignedCoCircuits M \setminus \sigma^*$. We say that the four resulting algorithms have the same \emph{structure}. 

\begin{conject}\label{conj:uniqueness}
    Every consistent sandpile torsor algorithm for the class of oriented regular matroids with the auxiliary data of triangulating signatures has the same structure as the BBY algorithm. 
\end{conject}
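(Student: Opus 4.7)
The plan is induction on $|E(M)|$, comparing an arbitrary consistent algorithm $\Gamma$ with a judiciously chosen variant of the BBY algorithm $\Gamma^\beta$. One first verifies the conjecture on a short list of small base matroids (say, all connected regular matroids with few elements), where exhaustion shows that the only consistent simply transitive actions are the four BBY variants; this choice of variant, made once for the base cases, propagates upward by consistency. For the inductive step, fix $(M,\sigma,\sigma^*)$, a basis $B$, and an arc $\vec f$, and set $B_1:=\Gamma([\vec f],B)$ and $B_2:=\Gamma^\beta([\vec f],B)$, with the goal $B_1=B_2$.

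Property (3) of Definition~\ref{def:general_consistency} reduces to the case where $M$ is connected on $f$'s component. Loops and coloops $e\neq f$ are dispatched by deleting or contracting them and invoking the inductive hypothesis on $M\setminus e$ or $M/e$. In the remaining setting one searches for an $e\in E\setminus f$ with either $e\notin B\cup B_1\cup B_2$ (using property (1) on both algorithms over $M\setminus e$) or $e\in(B\cap B_1\cap B_2)\setminus f$ (using property (2) over $M/e$); by induction, each such reduction forces $B_1=B_2$.

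The main obstacle is the \emph{tight case}, where every $e\in E\setminus f$ belongs to at least one, but not all three, of $B, B_1, B_2$, so neither reduction is directly available. To break this logjam, I would exploit the abelian group structure of $S(M)$: writing $[\vec f]=[\vec g]+([\vec f]-[\vec g])$ for a well-chosen arc $\vec g$, or using a sandpile relation $[\vec C]=0$ along a signed circuit or cocircuit through $f$, decomposes the action of $[\vec f]$ into a chain of arc actions; the strategy is to choose $\vec g$ or $\vec C$ so that the intermediate bases break the tightness, allowing the inductive hypothesis to be applied piecewise. Combining this with the geometric constraint from Theorem~\ref{thm:generalDescription}, which pins $B\triangle B_2$ inside $C\cup C^*$ for specific signed circuits and cocircuits through $f$, should eventually force $B_1=B_2$. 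The hardest step will be proving that a suitable $\vec g$ or $\vec C$ always exists in the tight case without creating new tight configurations at intermediate steps—this amounts to a combinatorial lemma about interactions between bases, fundamental circuits, and fundamental cocircuits of regular matroids, and is where the absence of vertices makes the matroidal problem strictly harder than its plane-graph counterpart treated in~\cite{GM}.
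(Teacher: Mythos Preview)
The statement is a \emph{conjecture}, not a theorem: the paper explicitly leaves it open (Section~\ref{sec:further}) and only discusses a possible avenue of attack via the notion of \emph{generating pairs} (Definition~\ref{def:genpair}). There is therefore no proof in the paper to compare your attempt against.

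Your proposal is not a proof either, as you yourself acknowledge in the final paragraph: it is an outline of a strategy whose decisive step (``a suitable $\vec g$ or $\vec C$ always exists in the tight case'') is left as an unproven combinatorial lemma. The inductive shape you describe is natural and mirrors what was done for plane graphs in \cite{GM}, but several points would need substantial work before this could be called a proof. First, the base case is not as you describe it: consistency is a property of an \emph{algorithm}, i.e., of a compatible family of actions across all $(M,\sigma,\sigma^*)$, not of a single action on a single small matroid. On any fixed small matroid there are typically many simply transitive actions, not just four; the reduction to four comes only from compatibility constraints linking different matroids, so ``exhaustion on small matroids'' does not by itself pin down the variant. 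Second, the phrase ``this choice of variant, made once for the base cases, propagates upward by consistency'' hides a real difficulty: you must argue that the same one of the four variants works for \emph{every} $(M,\sigma,\sigma^*)$, and the inductive step as written only compares $\Gamma$ to a fixed $\Gamma^\beta$ rather than establishing which of the four it is globally. Third, and most seriously, the ``tight case'' is exactly where the plane-graph argument of \cite{GM} leans on vertices and the ribbon structure; your proposed workaround via decomposing $[\vec f]$ along a circuit or cocircuit is plausible heuristics but, as you note, you have no control over whether the intermediate bases escape tightness. Until that lemma is actually proved, what you have is a research plan, not a proof.
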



In \cite{GM}, the analogue of Conjecture~\ref{conj:uniqueness} was proved for plane graphs. One of the techniques used in~\cite{GM} was to identify a small, well-behaved set of pairs of sandpile group elements and spanning trees, whose action already determine the sandpile action. Finding such a set of pairs for the BBY action could help in proving Conjecture \ref{conj:uniqueness}, and may be of independent interest.

\begin{defn}\label{def:genpair}
    Consider $\mathbf P \subseteq (S(M) \times \mathbf B(M))$. We say that $\mathbf P$ is a \emph{set of generating pairs} for the BBY action if the following condition is satisfied. 

    For any $(s,B) \in (S(M)\times \mathbf B(M))$, there is a finite $k$ for which there exist collections $s_1,s_2,\dots,s_k \in S(M)$ and $B_1,B_2,\dots, B_k,B_{k+1} \in \mathbf B(M)$ with the following properties: 
    
    \begin{enumerate}
        \item $\sum_{i=1}^k s_i = s$
        \item For each $i\le k$, we have $(s_i,B_i) \in \mathbf P$. 
        \item $B_1 = B$. 
        \item For each $i\le k$, we have $B_{i+1} =  \Gamma^\beta_{(M,\sigma,\sigma^*)}(s_i,B_i)$. 
    \end{enumerate}
\end{defn}

Definition~\ref{def:genpair} is related to the field of \emph{reconfiguration theory} (see~\cite{Reconfiguration} for an overview). The following proposition shows how identifying a set of generating pairs can simplify the task of identifying when two sandpile torsor actions are equivalent. 

\begin{prop}
    Let $\mathbf P$ be a set of generating pairs and $\Gamma_{(M,\sigma,\sigma^*)}$ be a sandpile torsor action on $(M,\sigma,\sigma^*)$. Suppose that for all $(s,B) \in \mathbf P$, we have \[  \Gamma^\beta_{(M,\sigma,\sigma^*)}(s,B) = \Gamma_{(M,\sigma,\sigma^*)}(s,B).\] 
    Then, $\Gamma_{(M,\sigma,\sigma^*)}$ is equivalent to the BBY action. 
\end{prop}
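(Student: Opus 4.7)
The plan is to prove the equivalence pointwise: for every pair $(s,B) \in S(M) \times \mathbf{B}(M)$, I will show that $\Gamma^\beta_{(M,\sigma,\sigma^*)}(s,B) = \Gamma_{(M,\sigma,\sigma^*)}(s,B)$, which is all that is required since both sides define group actions of $S(M)$ on $\mathbf{B}(M)$ and equivalence here simply means pointwise equality. Fix such a pair $(s,B)$ and apply Definition~\ref{def:genpair} to extract an integer $k \geq 0$, elements $s_1, \ldots, s_k \in S(M)$ with $s_1 + \cdots + s_k = s$, and bases $B_1 = B, B_2, \ldots, B_{k+1}$ such that $(s_i, B_i) \in \mathbf{P}$ and $B_{i+1} = \Gamma^\beta_{(M,\sigma,\sigma^*)}(s_i, B_i)$ for every $i$.

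The core step is then a routine induction on $i$. Using the group-action axiom $\Gamma^\beta_{(M,\sigma,\sigma^*)}(t + t', -) = \Gamma^\beta_{(M,\sigma,\sigma^*)}(t, \Gamma^\beta_{(M,\sigma,\sigma^*)}(t', -))$ together with the relation $B_{i+1} = \Gamma^\beta_{(M,\sigma,\sigma^*)}(s_i, B_i)$, one sees inductively that $\Gamma^\beta_{(M,\sigma,\sigma^*)}(s_1 + \cdots + s_i, B) = B_{i+1}$ for each $i$, so that taking $i = k$ gives $\Gamma^\beta_{(M,\sigma,\sigma^*)}(s, B) = B_{k+1}$. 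I would then run exactly the same induction through $\Gamma_{(M,\sigma,\sigma^*)}$, using the fact that $\Gamma_{(M,\sigma,\sigma^*)}$ is also a group action and that the hypothesis of the proposition gives $\Gamma_{(M,\sigma,\sigma^*)}(s_i, B_i) = \Gamma^\beta_{(M,\sigma,\sigma^*)}(s_i, B_i) = B_{i+1}$ at every step (since $(s_i, B_i) \in \mathbf{P}$). Telescoping yields $\Gamma_{(M,\sigma,\sigma^*)}(s, B) = B_{k+1}$ as well, so the two actions assign $(s,B)$ the same image and the proof concludes.

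The content here is essentially bookkeeping: the equality on $\mathbf{P}$ is propagated to the whole of $S(M) \times \mathbf{B}(M)$ by the group-action axiom, and there is no combinatorial obstacle to overcome. The genuine difficulty lies outside the proposition itself, namely in exhibiting a tractable set $\mathbf{P}$ of generating pairs for the BBY action whose images under an arbitrary consistent sandpile torsor algorithm can be pinned down from the axioms of consistency; such a $\mathbf{P}$ would then serve as the engine of an attack on Conjecture~\ref{conj:uniqueness}.
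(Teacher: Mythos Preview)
Your proof is correct and is exactly the argument the paper has in mind: the paper's own proof is the single line ``Immediate from the definition,'' and you have simply written out the routine induction that unpacks this. There is nothing to add.
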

\begin{proof}
    Immediate from the definition. 
\end{proof}

It is not hard to show that $\mathbf P = (\{[\vec f] \mid  \vec f \text{ is an arc of }M\} \times \mathbf B(M))$ is a set of generating pairs, but the action of an arbitrary arc can be quite complicated. 

\begin{open}
    Given a regular matroid and a triangulating circuit-cocircuit signature, what is the smallest set of generating pairs? 
\end{open}
Less concretely, we are curious if there are any general constructions for a set of generating pairs that is significantly smaller than $(S(M) \times \mathbf B(M))$. For our final open question, we ask about a one particular candidate. 
\begin{open}
    Let $\mathbf P$ be the set of pairs where the BBY algorithm only swaps a single element of $E$. Is $\mathbf P$ a set of generating pairs? Is there an easy way to see when an element of $(S(M) \times \mathbf B(M))$ is in $\mathbf P$? 
\end{open}

\bibliographystyle{alpha}
\bibliography{biblio.bib}

\end{document}